\documentclass[11pt]{amsart}
\usepackage[margin=1.15in]{geometry}
\usepackage{amsmath,amssymb,amsthm,mathtools}
\usepackage{booktabs}
\usepackage{enumitem}
\usepackage{microtype}
\usepackage{xcolor}
\usepackage{tikz}
\usetikzlibrary{arrows.meta,decorations.markings,positioning,calc}
\usepackage[colorlinks=true,linkcolor=blue!50!black,citecolor=blue!50!black,urlcolor=blue!50!black]{hyperref}
\usepackage[capitalise,nameinlink]{cleveref}
\hypersetup{pdftitle={Nim heaps in the Game of Cycles: the parity conjecture fails on trees},pdfauthor={Muchen Xu},pdfkeywords={Game of Cycles, Game of Arrows, impartial games, Sprague-Grundy theory, trees, spiders}}

\newtheorem{theorem}{Theorem}[section]
\newtheorem{lemma}[theorem]{Lemma}
\newtheorem{proposition}[theorem]{Proposition}
\newtheorem{corollary}[theorem]{Corollary}
\newtheorem{conjecture}[theorem]{Conjecture}
\newtheorem{question}[theorem]{Question}
\newtheorem{observation}[theorem]{Observation}
\newtheorem*{mainthm}{Main Theorem}
\theoremstyle{definition}
\newtheorem{definition}[theorem]{Definition}

\theoremstyle{remark}
\newtheorem{remark}[theorem]{Remark}

\newcommand{\G}{\mathcal{G}}
\newcommand{\gv}{\operatorname{g}}           
\newcommand{\mex}{\operatorname{mex}}
\newcommand{\m}{\operatorname{m}}
\newcommand{\sn}{\mathsf{n}}
\newcommand{\si}{\mathsf{i}}
\newcommand{\so}{\mathsf{o}}
\newcommand{\sff}{\mathsf{f}}
\newcommand{\R}{\mathrm{R}}
\newcommand{\Sp}{S}
\newcommand{\free}{\ast}
\newcommand{\rgt}{\rightarrow}
\newcommand{\lft}{\leftarrow}
\newcommand{\NN}{\mathbb{N}}
\newcommand{\IN}{\textup{\textsc{in}}}
\newcommand{\OUT}{\textup{\textsc{out}}}

\tikzset{
  v/.style={circle,fill=black,inner sep=0pt,minimum size=4.2pt},
  sp/.style={rectangle,fill=black,inner sep=0pt,minimum size=5pt},
  lf/.style={circle,draw=black,fill=white,inner sep=0pt,minimum size=4.2pt},
  ctr/.style={circle,draw=black,fill=red!70!black,inner sep=0pt,minimum size=6pt},
  pend/.style={densely dashed,gray},
  arc/.style={postaction={decorate},decoration={markings,mark=at position 0.62 with {\arrow{Stealth[length=5pt]}}},thick},
}

\title[Nim heaps in the Game of Cycles]{Nim heaps in the Game of Cycles:\\ the parity conjecture fails on trees}

\author{Muchen Xu}
\address{University of Southern California, Los Angeles, CA, USA}
\email{xupeter@usc.edu}

\date{October 2026}

\subjclass[2020]{Primary 91A46; Secondary 05C05, 05C57, 91A43}
\keywords{Game of Cycles, Game of Arrows, impartial games, Sprague--Grundy theory, trees, spiders}

\begin{document}

\begin{abstract}
The Game of Cycles, invented by Francis Su, is a two-player game in which the players take turns drawing arrows on the edges of a plane graph, never creating a sink or a source. On a tree only the sink--source rule matters, and the player who makes the last move wins. Several authors conjectured that on a tree the second player wins exactly when the number of markable edges is even. We show that this parity conjecture is false, and we explain why.

The explanation is a hidden game of Nim: a path segment that is blocked by an arrow at one end and free at the other behaves exactly like a nim heap whose size is its length. Combined with a calculus for cutting and gluing trees at vertices next to leaves, this shows that the conjecture fails as soon as a single tree has Grundy value at least $2$. The smallest counterexamples have $15$ vertices. Trees with Grundy values $2$ and $4$ first appear at $17$ vertices, and every value up to $63$ occurs, which answers a question of Fokkink and Zandee. We also refute a related conjecture of Zandee.

We then look closely at spiders. The spiders $\Sp(2,3,z)$ violate the conjecture for every $z\ge 14$. The Grundy values of $\Sp(1,2j,z)$ are given by an exact formula governed by binary carries. For $99$ pairs $(x,y)$, the value of $\Sp(x,y,z)$ is eventually the parity of $z$. On the positive side, the conjecture holds for all trees in which every thread has at most two edges, and this bound is sharp. Along the way we prove a twin-branch reduction, give an algorithm that is polynomial for spiders with a fixed number of legs, and collect the questions that we could not answer. The boldest asks whether every non-negative integer is the Grundy value of some tree.
\end{abstract}

\maketitle

\section{Introduction}\label{sec:intro}

The \emph{Game of Cycles} was invented by Francis Su for his book \emph{Mathematics for Human Flourishing}~\cite{Su2020}, and was first analysed by Alvarado et al.~\cite{Alvarado2021}. Two players take turns marking an unmarked edge of a plane graph with an arrow. One rule makes the game interesting: no vertex may ever become a \emph{sink}, with all of its edges marked and pointing in, or a \emph{source}, with all of its edges marked and pointing out. A player who completes a \emph{cycle cell}, a bounded face whose boundary arrows all point the same way around it, wins at once. Otherwise the player who makes the last legal move wins.

It is a delightful game to show to someone. The rules fit on a napkin, a game on a small board takes a minute, and yet good play is far from obvious. It has been studied on cycles and symmetric graphs~\cite{Alvarado2021}, three-legged spiders~\cite{Mathews2022}, grids and theta graphs~\cite{Barua2022} and cacti~\cite{Cacti2024}, as well as in variants~\cite{Sahai2023}. Two theses examine it~\cite{Lin2021,Zandee2022}, and a computer program decides small boards~\cite{Garcia2025}.

Anyone who plays a few games is tempted by a guess. On every board that Alvarado et al.\ solved, the first player wins exactly when the number of edges is odd. Lin~\cite[Conjecture~4.1]{Lin2021} refined this to the number of \emph{markable} edges. Boards with triangles turned out to break the rule~\cite{Cacti2024,Fokkink2024}, but on trees the guess survived. A tree has no cells, so only the sink--source rule matters, and the game becomes a pure ``last move wins'' impartial game, which Mathews~\cite{Mathews2022} called the \emph{Game of Arrows}. An edge at a leaf can never be marked, since marking it would turn the leaf into a sink or a source. So the natural parameter of a tree $T$ is
\[
  \m(T)=\#\{\text{edges of $T$ not incident to a leaf}\}.
\]

\begin{conjecture}[Parity Conjecture for trees; Lin~\cite{Lin2021}, Zandee~\cite{Zandee2022}, Fokkink--Zandee~\cite{Fokkink2024}]\label{conj:parity}
For every tree $T$, the second player wins the Game of Arrows on $T$ if and only if $\m(T)$ is even. Equivalently, the Grundy value of $T$ is $0$ exactly when $\m(T)$ is even.
\end{conjecture}

The evidence was good. The conjecture is known in several cases:
\begin{itemize}[leftmargin=*]
\item paths~\cite{Lin2021,Zandee2022,Sahai2023};
\item trees that have no vertex of degree~$2$ once the pendant edges are removed~\cite[Theorem~1]{Fokkink2024};
\item trees in which every vertex of degree~$2$ is adjacent to a leaf~\cite[Theorem~4.2]{Zandee2022};
\item three-legged spiders with two equal legs~\cite[Theorem~5.3]{Lin2021};
\item three-legged spiders all of whose legs have an even number of markable edges~\cite{Mathews2022}.
\end{itemize}
Fokkink and Zandee single out spiders as the difficult case. They also ask whether any graph has Grundy value $4$ or more, the largest value they had found being $3$~\cite{Fokkink2024}. Lin asks for a ``simple and nice winning strategy'' on general three-legged spiders~\cite[Section~7.1]{Lin2021}.

\subsection*{A hidden game of Nim}
The conjecture holds for all $5446$ trees with $2$ to $14$ vertices. On $15$ vertices it fails, exactly twice, for the spiders $\Sp(1,3,7)$ and $\Sp(1,4,6)$ of \cref{fig:cex}. When the first counterexample appeared on the screen, our first reaction was suspicion, so we checked it with two independently written brute-force programs. Our second reaction was curiosity: \emph{why} does parity fail here, after holding so stubbornly on thousands of smaller trees?

\begin{figure}[t]
\centering
\begin{tikzpicture}[scale=0.62]
\begin{scope}
\node[ctr] (c) at (0,0) {};
\node at (-0.55,0) {$c$};
\node[sp] (a1) at (1,1.4) {}; \node[lf] (a2) at (2,1.4) {};
\draw (c)--(a1); \draw[pend] (a1)--(a2);
\node[v] (b1) at (1,0) {}; \node[v] (b2) at (2,0) {}; \node[sp] (b3) at (3,0) {}; \node[lf] (b4) at (4,0) {};
\draw (c)--(b1)--(b2)--(b3); \draw[pend] (b3)--(b4);
\node[v] (c1) at (1,-1.4) {};
\foreach \i in {2,...,6} {\node[v] (c\i) at (\i,-1.4) {};}
\node[sp] (c7) at (7,-1.4) {}; \node[lf] (c8) at (8,-1.4) {};
\draw (c)--(c1); \foreach \i [evaluate=\i as \j using int(\i+1)] in {1,...,6} {\draw (c\i)--(c\j);}
\draw[pend] (c7)--(c8);
\node at (2.6,1.4) {\small $X$}; \node at (4.6,0) {\small $Y$}; \node at (8.6,-1.4) {\small $Z$};
\node at (4,-2.5) {$\Sp(1,3,7)$: $\m=11$, $\gv=0$};
\end{scope}
\begin{scope}[xshift=11cm]
\node[ctr] (c) at (0,0) {};
\node at (-0.55,0) {$c$};
\node[sp] (a1) at (1,1.4) {}; \node[lf] (a2) at (2,1.4) {};
\draw (c)--(a1); \draw[pend] (a1)--(a2);
\foreach \i in {1,2,3} {\node[v] (b\i) at (\i,0) {};}
\node[sp] (b4) at (4,0) {}; \node[lf] (b5) at (5,0) {};
\draw (c)--(b1)--(b2)--(b3)--(b4); \draw[pend] (b4)--(b5);
\foreach \i in {1,...,5} {\node[v] (c\i) at (\i,-1.4) {};}
\node[sp] (c6) at (6,-1.4) {}; \node[lf] (c7) at (7,-1.4) {};
\draw (c)--(c1); \foreach \i [evaluate=\i as \j using int(\i+1)] in {1,...,5} {\draw (c\i)--(c\j);}
\draw[pend] (c6)--(c7);
\node at (3.5,-2.5) {$\Sp(1,4,6)$: $\m=11$, $\gv=0$};
\end{scope}
\end{tikzpicture}
\caption{The two smallest counterexamples to the parity conjecture ($15$ vertices each). Dashed edges are pendant (never markable), squares are special vertices (non-leaves adjacent to a leaf), and the red vertex is the centre $c$. The legs of $\Sp(1,3,7)$ are named $X,Y,Z$ in \cref{tab:firstmoves}.}
\label{fig:cex}
\end{figure}

The answer is the heart of this paper, and it is pleasantly simple. Cut a leg of a spider somewhere in the middle. The part that falls off is a path whose inner end is now blocked by the new arrow and whose outer end is still free. Such a blocked path with $k$ edges behaves \emph{exactly like a nim heap of size $k$} (\cref{thm:runs}). So the Game of Arrows secretly contains Nim, with heaps of every size, and there is no reason for parity to decide anything. On $\Sp(1,3,7)$ every first move leads to a position with Grundy value $2$, $4$, $6$ or $8$. In each case the second player has an answer, typically a cut that removes a heap of exactly the right size, and the first player loses (\cref{tab:firstmoves}). On small trees the heaps are too short to matter, which is why the conjecture survives so long.

Once the nim heaps are visible, many things become possible. We can glue trees together and add their Grundy values, build trees with large Grundy values, prove that whole families of spiders violate the conjecture, and also prove that parity \emph{does} win on trees without long paths of degree-$2$ vertices.

\subsection*{Main results}
We use standard Sprague--Grundy theory, recalled in \cref{sec:prelim}, and write $\gv(T)$ for the Grundy value of the empty board on~$T$.

\begin{mainthm}
\leavevmode
\begin{enumerate}[label=\textup{(\Alph*)},leftmargin=*]
\item \textup{(Runs are nim heaps; \cref{thm:runs}.)} Consider a path segment of $k$ unmarked edges whose interior vertices have degree~$2$. If one end of the segment is free and the other end is blocked by an arrow, the segment is equivalent to a nim heap of size~$k$. The three other boundary types have Grundy values in $\{0,1\}$, given by explicit formulas.
\item \textup{(Splitting, gluing and reformulation; \cref{sec:consequences}.)} The game on $T$ is the disjunctive sum of the games on the \emph{pieces} obtained by cutting $T$ at the vertices adjacent to leaves. Gluing two trees at such vertices nim-adds their Grundy values. Consequently, \cref{conj:parity} is equivalent to $\gv(T)=\m(T)\bmod 2$ for every tree $T$, and it fails as soon as a single tree has Grundy value at least~$2$.
\item \textup{(Twin branches; \cref{thm:twin}.)} Suppose two branches at a vertex $c$ are isomorphic by a map fixing $c$. Then replacing both by a single pendant edge at $c$ changes neither the Grundy value nor the parity of $\m$. This recovers and extends the results for spiders with two equal legs.
\item \textup{(Counterexamples; \cref{thm:census}.)}
\begin{itemize}[leftmargin=*]
\item The parity conjecture holds for every tree with at most $14$ vertices.
\item On $15$ vertices there are exactly two counterexamples, the spiders $\Sp(1,3,7)$ and $\Sp(1,4,6)$, whose legs have $1,3,7$ and $1,4,6$ markable edges. Both have Grundy value $0$ but $11$ markable edges.
\item On $17$ vertices there are trees with Grundy values $2$ and $4$, and both are counterexamples.
\end{itemize}
\item \textup{(Large Grundy values; \cref{thm:allvalues}.)} Every integer $0\le g\le 63$ is the Grundy value of a tree; for example, the spider $\Sp(17,90,95)$ has Grundy value $32$. This answers the question of Fokkink and Zandee in a strong form, already within the class of trees.
\item \textup{(Zandee's second conjecture; \cref{prop:zandee}.)} Zandee conjectured that a tree with exactly one marked edge has Grundy value of the same parity as the number of remaining markable edges~\cite[Conjecture~4.3]{Zandee2022}. This fails already on $12$ vertices.
\item \textup{(Spider families; \cref{thm:s23,thm:onetwoj,thm:cert}.)}
\begin{itemize}[leftmargin=*]
\item $\gv(\Sp(2,3,z))=z\bmod 2$ for all $z\ge14$, so every such spider violates the parity conjecture.
\item For $j\ge2$ and $z\ge2j+2$, $\gv(\Sp(1,2j,z))$ is given by an explicit formula involving binary carries. It is periodic in $z$ with least period $2^{\lceil\log_2 j\rceil+2}$, and it violates the conjecture for a positive proportion of~$z$.
\item For $99$ pairs $(x,y)$, including $(2,3)$, $\gv(\Sp(x,y,z))=z\bmod 2$ for all $z$ beyond an explicit threshold.
\end{itemize}
\item \textup{(Short threads; \cref{cor:threads2,thm:threads3}.)} If every \emph{thread} of $T$ (a maximal path in a piece whose interior vertices have degree~$2$) has at most two edges, then $\gv(T)=\m(T)\bmod 2$. This contains the theorems of Fokkink--Zandee and of Zandee on trees without long paths of degree-$2$ vertices. The bound is sharp: there is a counterexample on $23$ vertices whose threads have at most three edges.
\end{enumerate}
\end{mainthm}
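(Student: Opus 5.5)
The Main Theorem collects the results of the later sections, so I would prove it part by part, with (A) and (B) as the foundation for everything else. For (A), I would fix a run of $k$ unmarked edges with degree-$2$ interior vertices and classify each end as free or blocked by an existing arrow. A move marks one edge, and the sink--source rule at a degree-$2$ vertex forces consistency with the neighbouring arrows. So a move in the run splits it into two shorter runs whose new boundary types are determined by the direction chosen. For the free--blocked type I would induct on $k$. From a run of length $k$, the available moves should reach free--blocked runs of every length $0,\dots,k-1$, possibly summed with runs of the other three types. For the other three types (free--free, blocked--blocked aligned, blocked--blocked opposed) I would prove Grundy values in $\{0,1\}$ simultaneously by parity formulas in $k$. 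The mex computation then closes the induction and gives value exactly $k$. I expect this simultaneous induction, with its case analysis of the boundary orientations, to be the main technical obstacle. Every other part rests on it.

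For (B), the key observation is that the vertex adjacent to a leaf (a special vertex) can never become a sink or source. Its leaf edge stays unmarked forever, so the constraint at that vertex is vacuous. Cutting the tree at such a vertex, with a pendant edge attached to each side, therefore produces a disjunctive sum, and Sprague--Grundy theory makes gluing add Grundy values in the nim sense. Since $\m$ is additive under this cutting, and every tree decomposes into pieces, the equivalence with $\gv(T)=\m(T)\bmod 2$ follows. For the last claim of (B), suppose $\gv(T)\ge 2$. Glue $T$ to a path chosen so that the nim sum is $0$ while $\m$ is odd, or to a suitable small tree. This produces an explicit counterexample.

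For (C), I would use a Tweedledum--Tweedledee argument. If $T$ has two branches at $c$ that are isomorphic by a map fixing $c$, I would show that the sum of $T$ with $T'$ (the tree with both branches replaced by a single pendant edge at $c$) is a second-player win. The strategy mirrors moves between the two branches, and moves in the common part are copied across the two games. The sink--source condition at $c$ must be checked under mirroring. The replacement pendant edge models exactly that $c$ is then never in danger, because the twin branches supply opposing arrows. The parity of $\m$ is preserved because the two branches contribute equally.

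Parts (D) and (E) are computational. I would exhaustively enumerate trees up to $17$ vertices and compute Grundy values, with the piece decomposition and memoisation of positions up to isomorphism. For (E), I would combine gluing from (B) with searches over spiders to realise every $g\le 63$. For (F), I would exhibit a $12$-vertex tree with one marked edge and compute its value. For (G), I would reduce $\Sp(x,y,z)$ after the first moves via (A): a cut on the long leg leaves a nim heap plus a bounded spider. The Grundy value of $\Sp(x,y,z)$ then becomes a mex over nim sums of finitely many periodic sequences in $z$, and the binary-carry formula for $\Sp(1,2j,z)$ comes from evaluating these nim sums explicitly. The $99$ pairs would be certified by computing up to a threshold beyond which periodicity is provably inherited. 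For (H), I would induct on the tree using the pieces of (B). With threads of length at most $2$, part (A) only ever produces heaps of size at most $2$ together with values in $\{0,1\}$, and a parity invariant can be verified case by case. Sharpness would be shown by exhibiting the $23$-vertex tree.
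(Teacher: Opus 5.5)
Your outlines of (A) and (C) match the paper: a simultaneous induction over the four run types, and a copycat strategy on $T+T'$. In (C), note that the mirror between the twins must \emph{reverse} arrows, so that $c$ gets one incoming and one outgoing arc.

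\textbf{The genuine gap is in (H).} No parity invariant holds on all positions reachable from a tree whose threads have at most two edges. A free--blocked run of length $2$ has value $B_2=2$, and such runs do occur. Take $\Sp(1,1,2)$, all of whose threads have at most two edges, and orient both one-edge legs towards the centre. The centre then has status $\si$ and degree $1$, and the remaining component is $\R_2(\free,\lft)\cong\R_2(\free,\rgt)$: two edges, value $2$. So $\gv=|E|\bmod 2$ cannot be verified ``case by case'' on positions.

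The missing idea is a class of positions closed under a move \emph{followed by at most one reply}. The paper's class $\mathcal C$ of clean and almost clean trees (\cref{def:clean}) contains all pieces. By \cref{lem:main}, every move from a member of $\mathcal C$ either leaves a $\mathcal C$-sum or can be answered by a reply that restores one. In the example, the strategy answers the second move by orienting the remaining edge at the centre, leaving a single edge with a free end. The mex computation in \cref{thm:clean} then only needs each option to \emph{avoid} the value $e\bmod 2$, and a restoring reply certifies exactly that.

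\textbf{Your plan for (G) also rests on a false premise.} A cut of the long leg that detaches a heap of size $t$ leaves attached a spider state with a blocked long leg of $i=z-1-t$ edges. These states are not bounded: by \cref{prop:blockedleg} their values tend to infinity, and for $(x,y)=(2,3)$ they are eventually $i+2$. So the options $t\oplus V^\delta_{\kappa_0}(z-1-t)$, $0\le t\le z-2$, are not finitely many periodic sequences in $z$, and a plain periodicity argument does not apply.

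The paper's mechanism is that this family is eventually $i+c$ with $c$ even. Then every cut with a long attached part has the parity of $z+1$, and for odd $z$ the cut whose heap equals the attached value gives $t\oplus t=0$. Certifying a pair therefore requires propagating two kinds of families: bounded, eventually periodic ones (\cref{lem:propP}), and linearly growing ones with periodic offset (\cref{lem:propL}). For $(1,2j)$ the binary carries enter exactly through the oscillating offset $\varepsilon_j$ of \cref{lem:L5}.

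\textbf{There is also a slip in (B).} A path has Grundy value at most $1$, so gluing one to a tree with $\gv(T)\ge2$ never yields nim-sum $0$. The counterexample must instead have non-zero value and even $\m$: take $T$ itself if $\m(T)$ is even, and $T\circ P_4$ otherwise. This gluing with $P_4$ is also what proves that the parity conjecture implies $\gv(T)=\m(T)\bmod 2$. Additivity of $\m$ alone does not give that direction.
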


\subsection*{A word on method}
Much of this work was experimental in the best sense: compute, stare at the tables, guess, and then try to prove the guess. The proofs of (A)--(C), of the theorem on threads in (H), and of the formula for $\Sp(1,2j,z)$ in (G) are by hand. The result on $\Sp(2,3,z)$ is proved by hand apart from a finite check of $52$ explicit integer sequences. Parts (D)--(F), the rest of (G) and the sharpness example in (H) are computer-assisted, and we took care to make the computations trustworthy.
\begin{itemize}[leftmargin=*]
\item All statements about trees with at most $20$ vertices were confirmed by a program that implements the raw rules of the game and uses none of our theory. For at most $16$ vertices, two such programs, written independently, were used. All of these statements were also confirmed by the theory-based solver of \cref{prop:sum}.
\item The statements about larger spiders were confirmed by at least two independently written solvers based on \cref{prop:spiderrec}.
\item Statements about larger trees that are not spiders, such as the $H$-shaped trees with large Grundy values in \cref{sec:open}, rest on theory-based solvers, each confirmed by two independent implementations.
\end{itemize}
\Cref{tab:verification} lists which programs confirmed which claim, and all programs are available as ancillary files.

\subsection*{Organisation}
\Cref{sec:prelim,sec:forest} set up the game and a convenient bookkeeping of positions. \Cref{sec:runs} contains the nim-heap computation, and \cref{sec:consequences} its structural consequences: gluing, the reformulation, and twin branches. \Cref{sec:spiders} describes all positions of a spider and gives a polynomial-time algorithm, which partly addresses Lin's question. \Cref{sec:computation} reports the experiments. \Cref{sec:s23,sec:onetwoj} prove the results on spider families, and \cref{sec:threads} the theorem on short threads. We close in \cref{sec:open} with the questions we enjoyed most and could not answer. The boldest of them asks whether every non-negative integer is the Grundy value of some tree.

\section{Preliminaries}\label{sec:prelim}

\subsection{Impartial games}
We use standard facts from combinatorial game theory; see~\cite{WinningWays,LessonsInPlay,Siegel2013}. A finite impartial game under the normal play convention (the player unable to move loses) has a \emph{Grundy value}
\[
  \gv(P)=\mex\{\gv(P') : P' \text{ an option of } P\},
\]
where $\mex S$ is the least non-negative integer not in $S$. The second player wins from $P$ if and only if $\gv(P)=0$. In a disjunctive sum, a move is a move in exactly one component. The Sprague--Grundy theorem~\cite{Sprague1935,Grundy1939} gives $\gv(P+Q)=\gv(P)\oplus\gv(Q)$, where $\oplus$ is bitwise exclusive or (nim-sum). A \emph{nim heap} of size $k$ has Grundy value $k$. Two games $P,Q$ satisfy $\gv(P)=\gv(Q)$ if and only if $P+Q$ is a second-player win. This is the basis of the \emph{copycat} argument in \cref{thm:twin}.

\subsection{The Game of Arrows}
Let $T$ be a finite tree. A \emph{position} assigns to each edge either the label \emph{unmarked} or one of its two orientations (an \emph{arc}). A vertex $v$ is a \emph{sink} (resp.\ \emph{source}) if $v$ has at least one edge, all edges at $v$ are marked, and all of them point to (resp.\ away from) $v$. A \emph{move} orients one unmarked edge, and it is \emph{legal} if afterwards neither endpoint of that edge is a sink or a source. We start from the empty position, and the player who makes the last legal move wins. We write $\gv(T)$ for the Grundy value of the empty position.

A \emph{leaf} is a vertex of degree~$1$, and an edge at a leaf is \emph{pendant}. A vertex is \emph{special} if it is not a leaf but is adjacent to a leaf.\footnote{This is the ``special vertex'' of Fokkink and Zandee~\cite{Fokkink2024}, created by trimming pendant edges.}

\begin{lemma}\label{lem:pendant}
\begin{enumerate}[label=\textup{(\alph*)}]
\item A pendant edge can never be marked.
\item A special vertex can never become a sink or a source.
\end{enumerate}
\end{lemma}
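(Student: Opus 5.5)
For part (a), I would argue directly from the definition of a legal move. Let $e=uv$ be a pendant edge with $u$ a leaf. The only edge at $u$ is $e$, so once $e$ is oriented, every edge at $u$ is marked. All of them point into $u$ if $e$ is oriented towards $u$, and all point away from $u$ otherwise. So $u$ becomes a sink or a source, and the move is illegal. This holds in every position, since nothing else affects the status of $u$, so $e$ stays unmarked forever.

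For part (b), I would deduce the claim from (a). Let $v$ be a special vertex with an adjacent leaf $u$. By (a), the edge $uv$ is unmarked in every position reachable by legal play. By the definition of sink and source, a vertex can be one only if all of its edges are marked. So $v$ is never a sink or a source.

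There is one small point to check. The definition requires the vertex to have at least one edge, and $v$ has the edge $uv$, so the condition is consistent. The argument also applies to every position reachable from the empty position by legal moves, which is the only setting the paper uses.

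I expect no real obstacle. The only care needed is to state the claims for all reachable positions rather than just the starting one, and to note that (b) relies on (a) holding throughout play.
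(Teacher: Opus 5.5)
Your proposal is correct and follows essentially the paper's own argument: marking a pendant edge makes its leaf a sink or a source, so that edge stays unmarked forever, and a special vertex therefore always keeps an unmarked edge. Your remark that the claims must hold in every reachable position is exactly the "forever" in the paper's proof.
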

\begin{proof}
Marking a pendant edge makes its leaf a source or a sink. Hence the pendant edge at a special vertex stays unmarked forever, so that vertex always keeps an unmarked edge.
\end{proof}

So the \emph{markable} edges are the non-pendant ones, and $\m(T)$ counts them. \Cref{fig:rules} illustrates the rules.

\begin{figure}[t]
\centering
\begin{tikzpicture}[scale=0.85]
\node[v] (c) at (0,0) {}; \node at (0,-0.45) {$c$};
\node[sp] (a1) at (-1.4,0.9) {}; \node at (-1.4,1.3) {$a$};
\node[lf] (a2) at (-2.6,1.4) {};
\node[v] (b1) at (-1.4,-0.9) {}; \node at (-1.25,-1.3) {$b$};
\node[sp] (b2) at (-2.8,-0.9) {}; \node at (-2.8,-1.3) {$b'$}; \node[lf] (b3) at (-4,-0.9) {};
\node[sp] (d1) at (1.6,0) {}; \node at (1.6,0.4) {$d$};
\node[lf] (d2) at (2.8,0) {};
\draw[arc] (a1)--(c);
\draw[arc] (b1)--(c);
\draw (c)--(d1);
\draw (b1)--(b2);
\draw[pend] (a1)--(a2);
\draw[pend] (b2)--(b3);
\draw[pend] (d1)--(d2);
\end{tikzpicture}
\caption{A position on a tree with $\m=4$. Dashed edges are pendant and can never be marked; squares are special vertices. The edge $cd$ may be oriented $c\to d$ but not $d\to c$, since then $c$ would become a sink. The edge $bb'$ may only be oriented $b'\to b$, since $b\to b'$ would make $b$ a source. The special vertices $a,b',d$ can never become sinks or sources.}
\label{fig:rules}
\end{figure}

A \emph{spider} is a tree with exactly one vertex of degree at least~$3$, its \emph{centre}. For integers $k\ge3$ and $x_1,\dots,x_k\ge1$, we write $\Sp(x_1,\dots,x_k)$ for the spider whose $t$-th leg is a path of $x_t+1$ edges. The last edge of each leg is pendant, so the leg has exactly $x_t$ markable edges and $\m(\Sp(x_1,\dots,x_k))=\sum_t x_t$. The spider has $1+\sum_t (x_t+1)$ vertices. For example, $\Sp(1,3,7)$ has legs of $2,4,8$ edges and $15$ vertices (\cref{fig:cex}). A leg consisting of a single pendant edge would make the centre special, and then the spider splits into paths (see \cref{cor:leafbranch}); this is why we assume $x_t\ge 1$.

\section{Positions as labelled forests}\label{sec:forest}

Whether a move is legal depends only on what is happening at its two endpoints. Before we can see any nim heaps, we need a good way to record this, and a way to break a position into independent pieces.

\begin{definition}\label{def:status}
Let $P$ be a position on $T$ and $v$ a vertex. Let $d(v)$ be the number of unmarked \emph{non-pendant} edges at $v$. The \emph{status} $s(v)\in\{\sn,\si,\so,\sff\}$ is defined as follows.
\begin{itemize}[leftmargin=*]
\item $s(v)=\sff$ (\emph{free}) if $v$ is special, or if $v$ has both an incoming and an outgoing arc.
\item Otherwise, $s(v)=\si$ if $v$ has at least one incoming arc, $s(v)=\so$ if $v$ has at least one outgoing arc, and $s(v)=\sn$ if $v$ has no arcs.
\end{itemize}
\end{definition}

\begin{lemma}[Legality]\label{lem:legal}
Let $uv$ be an unmarked non-pendant edge. Orienting it as $u\to v$ is legal if and only if
\[
  \bigl(d(u)\ge 2 \text{ or } s(u)\in\{\si,\sff\}\bigr)\quad\text{and}\quad\bigl(d(v)\ge 2\text{ or } s(v)\in\{\so,\sff\}\bigr).
\]
After the move, $d(u)$ and $d(v)$ decrease by one. The statuses update by $s(u)\gets s(u)\cdot\so$ and $s(v)\gets s(v)\cdot\si$, where
\[
\sn\cdot\so=\so,\quad \si\cdot\so=\sff,\quad \so\cdot\so=\so,\quad \sff\cdot\so=\sff,\qquad
\sn\cdot\si=\si,\quad \si\cdot\si=\si,\quad \so\cdot\si=\sff,\quad \sff\cdot\si=\sff.
\]
\end{lemma}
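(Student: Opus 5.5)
The plan is to check the legality criterion separately at each endpoint. A move $u\to v$ only changes the edges at $u$ and at $v$, and the only vertices whose sink/source state can change are $u$ and $v$. So the move is legal exactly when, after it, $u$ is neither a sink nor a source and $v$ is neither a sink nor a source. After the move $u$ has an outgoing arc, so $u$ cannot be a sink; it suffices to decide when $u$ becomes a source. Symmetrically, $v$ acquires an incoming arc, so it suffices to decide when $v$ becomes a sink. By symmetry (reversing all arrows swaps $\si$ and $\so$), it is enough to treat $u$.

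For $u$, I would split into cases according to the definition of $s(u)$ and the value of $d(u)$.

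\begin{enumerate}[label=\textup{(\roman*)}]
\item If $u$ is special, it never becomes a source by \cref{lem:pendant}(b), and $s(u)=\sff$ by definition.
\item If $u$ is not special, then it has no pendant edge. Hence all edges at $u$ are non-pendant, and $d(u)$ counts all unmarked edges at $u$.
\item If $d(u)\ge2$, some edge at $u$ stays unmarked after the move, so $u$ is not a source.
\item If $d(u)=1$, the edge $uv$ is the last unmarked edge, so after the move all edges at $u$ are marked. Then $u$ is a source iff every previous arc at $u$ points out, i.e.\ iff $u$ has no incoming arc. This happens iff $s(u)\in\{\sn,\so\}$, since for a non-special vertex, "has an incoming arc" is exactly $s(u)\in\{\si,\sff\}$.
\end{enumerate}

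This gives the condition for $u$. The condition for $v$ is the mirror image, with $\so$ in place of $\si$. One small point to note is that $d(u)\ge1$ always holds, because $uv$ itself is unmarked and non-pendant.

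For the update rules, $d$ drops by one at each endpoint because exactly one unmarked non-pendant edge at each is marked. For the status, $u$ gains an outgoing arc. If $u$ is special, it stays $\sff$. Otherwise its new status is determined by which arc directions it now has: gaining out on top of none gives $\so$, on top of in gives $\sff$, on top of out gives $\so$, and $\sff$ stays $\sff$. This is exactly the table for $\cdot\so$, and $v$ is symmetric. The only delicate step, and it is minor, is case (ii): using that a non-special vertex has no pendant edges, so that "all edges marked" coincides with $d(u)=0$ after the move.
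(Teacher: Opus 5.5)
Your proof is correct and follows essentially the same route as the paper's: both note that $u$ can only become a source (and $v$ only a sink), handle special vertices via \cref{lem:pendant}, and for non-special $u$ identify ``all edges marked after the move'' with $d(u)=1$ and ``no incoming arc'' with $s(u)\in\{\sn,\so\}$. You also check the status-update table explicitly, which the paper leaves implicit; your step (ii) additionally relies on the fact that $u$ is not a leaf, since $uv$ is non-pendant, which the paper likewise uses without stating it.
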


\begin{proof}
Only $u$ and $v$ change. After the move, $u$ has an outgoing arc, so it cannot be a sink; it is a source exactly when all of its edges are marked and none points into~$u$. If $u$ is special this never happens (\cref{lem:pendant}), and $s(u)=\sff$. Otherwise every edge at $u$ is non-pendant. Then all edges at $u$ are marked after the move if and only if $d(u)=1$. Moreover, none points into $u$ if and only if $u$ had no incoming arc, i.e.\ $s(u)\in\{\sn,\so\}$. The argument for $v$ is symmetric.
\end{proof}

Thus the game from $P$ depends only on the \emph{reduced position}: the forest $F(P)$ of unmarked non-pendant edges, in which every vertex carries its status. We formalise this as a game in its own right.

\begin{definition}\label{def:labelledgame}
A \emph{status-labelled forest} is a finite forest $F$ together with a map $s\colon V(F)\to\{\sn,\si,\so,\sff\}$. The \emph{arrow game} on $(F,s)$ has as moves the orientations $u\to v$ of edges $uv$ of $F$ that satisfy the condition of \cref{lem:legal}, where $d$ is the degree in $F$. Such a move deletes the edge $uv$ and updates $s(u)$ and $s(v)$ as in \cref{lem:legal}.
\end{definition}

By \cref{lem:legal}, the game from a position $P$ on $T$ is isomorphic to the arrow game on $(F(P),s)$.

\begin{lemma}[Dead edges]\label{lem:dead}
Suppose an edge $e=uv$ of a status-labelled forest has no legal orientation. Then it has none after any sequence of moves. Consequently, deleting $e$ and setting $s(u)=s(v)=\sff$ does not change the game.
\end{lemma}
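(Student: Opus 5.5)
The plan is to prove a monotonicity property first and derive both claims from it. Call an endpoint $w$ of $e$ \emph{good for leaving} if $d(w)\ge2$ or $s(w)\in\{\si,\sff\}$, and \emph{good for entering} if $d(w)\ge2$ or $s(w)\in\{\so,\sff\}$. By \cref{lem:legal}, $u\to v$ is legal iff $u$ is good for leaving and $v$ good for entering. The same holds for $v\to u$ with the roles swapped.

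The key observation is how a move elsewhere affects an endpoint $w$ of $e$: the move either does not touch $w$, or it deletes another edge at $w$. In the latter case $d(w)$ drops by one but $s(w)$ gains an arc.

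For the first claim, I will show that ``no legal orientation'' is preserved by a single move not using $e$; induction then gives it for every sequence. Since $e$ is never legal, it is never deleted. No legal orientation means: ($u$ bad for leaving or $v$ bad for entering) and ($v$ bad for leaving or $u$ bad for entering). Being bad at $w$ forces $d(w)\le1$. Since $e$ itself is at $w$, this means $d(w)=1$, so $e$ is the only remaining edge at $w$ in $F$. Then no other move touches $w$, and $d(w)$ and $s(w)$ stay frozen. Hence every badness condition witnessing illegality persists, so $e$ stays dead. The only subtlety is checking that a bad endpoint really has $d=1$ rather than $0$; this is immediate because $e$ is incident to it.

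For the second claim, I will compare the game on $(F,s)$ with the game on $(F',s')$, where $e$ is deleted and $s'(u)=s'(v)=\sff$. I will exhibit an isomorphism of game trees: positions reachable in $F$ correspond to those in $F'$ by deleting $e$ and setting the two endpoint statuses to $\sff$, with moves corresponding identically. By the first claim, $e$ is never playable in $F$, so the moves in $F$ are all moves on edges of $F'$. It remains to show that each such move $xy$ has the same legality in both. If $xy$ avoids $u$ and $v$, nothing changes. If $xy$ is incident to, say, $u$, then in $F'$ the vertex $u$ is free, so its condition holds automatically. In $F$, $u$ has $e$ and $xy$, so $d_F(u)\ge2$ and the condition also holds automatically. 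The other endpoint's condition is identical in both games.

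Updates are also compatible. In $F'$, $u$ stays $\sff$, which is absorbing. In $F$, the actual status of $u$ changes, but this never matters: the argument shows $u$'s condition is automatically satisfied whenever an edge other than $e$ at $u$ is played. Once $d_F(u)$ drops to $1$ only $e$ remains at $u$, so there are no further moves at $u$. Thus the correspondence is preserved inductively, the game trees are isomorphic, and the games are equal. I expect this bookkeeping, that $u$'s true status is irrelevant afterwards, to be the only point needing care.
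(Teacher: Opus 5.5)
Your proof is correct and follows the paper's route: an endpoint witnessing illegality must have $d=1$, so it is frozen and $e$ stays dead forever. Since $e$ is then never deleted, every other edge at $u$ or $v$ sees $d\ge 2$ there, which is exactly the behaviour of a vertex of status $\sff$; you simply spell out the game-tree isomorphism and the status bookkeeping more explicitly than the paper does.
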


\begin{proof}
Each orientation of $e$ is illegal because some endpoint $w$ has $d(w)=1$ and an unsuitable status. For such $w$, the edge $e$ is its only edge, so no move other than one on $e$ touches $w$. Hence $d(w)$ and $s(w)$ are frozen, and $e$ stays illegal forever. Therefore $e$ is never deleted, so the condition at $u$ (and at $v$) in \cref{lem:legal} is satisfied for every other edge at $u$ (and at $v$), because $d\ge2$ there whenever another edge exists. This is exactly the effect of deleting $e$ and giving $u,v$ the status $\sff$, which also never changes.
\end{proof}

\begin{lemma}[Splitting]\label{lem:split}
If $s(v)=\sff$, the game is unchanged when $v$ is replaced by $\deg(v)$ new vertices of status $\sff$, each incident with exactly one of the edges formerly at $v$.
\end{lemma}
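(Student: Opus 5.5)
The plan is to build an explicit isomorphism between the two arrow games. The only thing the legality condition of \cref{lem:legal} ever reads at a vertex $w$ is the pair $(d(w),s(w))$, and a vertex of status $\sff$ is particularly simple. For a vertex $w$ with $s(w)=\sff$, the condition at $w$ holds regardless of $d(w)$. The status update gives $\sff\cdot\so=\sff\cdot\si=\sff$, so $w$ stays $\sff$ forever. Consequently, at an $\sff$ vertex the degree is irrelevant to legality and the status never changes.

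Let $(F',s')$ be the split forest, with $v$ replaced by new vertices $v_1,\dots,v_{\deg(v)}$, where $v_i$ is incident with the $i$-th edge $e_i$ formerly at $v$. Each $v_i$ has status $\sff$, and every other vertex keeps its status. Edges correspond bijectively: $e_i=vw$ corresponds to $v_iw$, and every other edge is unchanged. I would show by induction on the number of moves that this bijection, together with the induced bijection on orientations, maps legal moves to legal moves in both directions. The invariant is that after corresponding move sequences:
\begin{itemize}[leftmargin=*]
\item every vertex other than $v$ has the same degree and status in both forests;
\item $v$ and all the $v_i$ still have status $\sff$.
\end{itemize}

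For the inductive step, consider an orientation of an edge not touching $v$. Its endpoints have identical data in both forests, so it is legal in one forest exactly when it is legal in the other, and the updates agree.

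Now consider an orientation of $e_i$, say $v\to w$ or $w\to v$. The condition at the $v$-end is automatically true in both forests, because $v$ and $v_i$ are both $\sff$. The condition at $w$ is identical in both forests, since $w$ has the same degree and status there. After the move, $w$ updates identically in the two games. The degrees of $v$ and $v_i$ both drop by one, and their statuses both remain $\sff$. So the invariant is preserved.

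Hence the two game trees are isomorphic, which gives equality of Grundy values and indeed isomorphism of the games. There is no real obstacle here. The only point needing care is noting that the degree $d$ at an $\sff$ vertex never enters \cref{lem:legal}, so splitting $v$ (which changes these degrees) is harmless.
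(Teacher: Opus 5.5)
Your proof is correct and is essentially the paper's argument: the paper's proof is the one-line observation that, by \cref{lem:legal}, the legality condition at a vertex of status $\sff$ always holds and that status never changes. You have written out explicitly the move-by-move isomorphism between the two forests that this observation justifies.
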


\begin{proof}
By \cref{lem:legal}, the conditions at a vertex of status $\sff$ always hold, and the status $\sff$ never changes.
\end{proof}

\begin{definition}\label{def:normalform}
The \emph{normal form} of a status-labelled forest is obtained by deleting dead edges (setting their endpoints to $\sff$), splitting every vertex of status $\sff$, and discarding isolated vertices. Its connected components, each regarded as an arrow game, are the \emph{components}. The components of a position $P$ are those of $(F(P),s)$.
\end{definition}

\begin{proposition}\label{prop:sum}
Every position is the disjunctive sum of its components, so $\gv(P)=\bigoplus_K \gv(K)$.
\end{proposition}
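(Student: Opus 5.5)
We reduce to the arrow game and then check that the normal-form operations preserve the game. By \cref{lem:legal}, the game from $P$ is isomorphic to the arrow game on $(F(P),s)$. So it suffices to show two things: passing to the normal form does not change the game, and the arrow game on a status-labelled forest whose components are vertex-disjoint is the disjunctive sum of the arrow games on those components. The formula $\gv(P)=\bigoplus_K\gv(K)$ then follows from the Sprague--Grundy theorem.

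First, the normal form. Deleting one dead edge and setting its endpoints to $\sff$ leaves the game unchanged by \cref{lem:dead}. Splitting one $\sff$ vertex leaves it unchanged by \cref{lem:split}. Discarding an isolated vertex is harmless, since it lies on no edge and so is never consulted by \cref{lem:legal}. We apply these operations one at a time; each step yields an isomorphic game, so the final normal form is isomorphic to the original.

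A subtlety is that deleting dead edges may create new dead edges, or new $\sff$ vertices that then get split. This causes no trouble, because every operation strictly decreases the number of edges or increases the number of vertices at a fixed edge count, so the process terminates. We can either iterate until no operation applies, or note that the definition only needs some fixed application order. Either way, composing game isomorphisms gives an isomorphism.

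Second, the sum decomposition. Let the normal form have components $K_1,\dots,K_r$, which are vertex-disjoint. A move orients an edge $uv$ lying in exactly one $K_j$. Its legality depends only on $d(u),d(v),s(u),s(v)$, and these are computed inside $K_j$, since every edge at $u$ or $v$ belongs to $K_j$. The move changes only $d$ and $s$ at $u,v$ and deletes $uv$, so the other components are untouched. Hence the set of positions and the option relation match those of $K_1+\dots+K_r$ exactly.

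One point needs care. After a move, the resulting forest need not be in normal form: a new dead edge or a new $\sff$ vertex can appear. The sum statement itself does not require re-normalising, because the disjunctive structure holds for any labelled forest split into vertex-disjoint pieces. The statement we actually use is therefore the plain one: the arrow game on a disjoint union of labelled forests is the sum of the arrow games on the pieces. Normalisation is applied once, at the start, to identify the pieces.

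The only genuine obstacle is this bookkeeping: making sure that "component" is applied to a fixed normal form, and that the operations of \cref{def:normalform} commute well enough for the normal form to be well defined up to isomorphism. If well-definedness is in doubt, it is enough to fix one reduction order, since any order gives a game isomorphic to $P$.
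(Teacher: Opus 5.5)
Your proposal is correct and takes the same approach as the paper. The paper's proof likewise shows that the normal form defines the same game by \cref{lem:dead,lem:split}, and then observes that components are vertex-disjoint and that a move changes only the statuses at the endpoints of the deleted edge, so legality in one component never depends on another.
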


\begin{proof}
By \cref{lem:dead,lem:split}, the normal form defines the same game as $(F(P),s)$. Components share no vertices, and a move changes only the statuses of the endpoints of the deleted edge, which lie in one component. So legality in one component never depends on another.
\end{proof}

In the empty position, the status of a vertex is $\sff$ if it is special and $\sn$ otherwise, and there are no dead edges.\footnote{An edge $uv$ with $d(u)=1$ and $s(u)=\sn$ would force $u$ to have no other edges at all, since otherwise $u$ would be special. Then $u$ would be a leaf and $uv$ pendant.} We call the components of the empty position the \emph{pieces} of $T$. They are obtained by deleting all pendant edges and cutting the tree at every special vertex. Every leaf of a piece has status $\sff$, and every other vertex has status $\sn$ and the same degree as in~$T$.

\begin{corollary}\label{cor:pieces}
$\gv(T)=\bigoplus_{Q}\gv(Q)$ and $\m(T)=\sum_Q |E(Q)|$, where $Q$ runs over the pieces of $T$.
\end{corollary}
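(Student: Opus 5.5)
The corollary follows from \cref{prop:sum} once we identify the components of the empty position on $T$ and count their edges.

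For the Grundy value, apply \cref{prop:sum} to the empty position $P_0$ on $T$. By definition $\gv(T)=\gv(P_0)$, and the components of $P_0$ are exactly the pieces $Q$. Hence $\gv(T)=\bigoplus_Q \gv(Q)$. The only check needed is that the normal form of $(F(P_0),s)$ involves no dead-edge deletions, which is the footnote before the corollary. Then the normal form is obtained purely by splitting the special vertices (status $\sff$) and discarding isolated vertices. This is precisely the description of the pieces: delete pendant edges and cut at special vertices.

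For the edge count, note that $F(P_0)$ consists of all unmarked non-pendant edges, so it has exactly $\m(T)$ edges. We then check that neither normal-form operation changes the edge multiset.
\begin{itemize}[leftmargin=*]
\item Splitting a vertex of status $\sff$ (\cref{lem:split}) gives each former edge at $v$ to exactly one new copy of $v$. It therefore preserves the edges bijectively.
\item Discarding isolated vertices removes no edges.
\item No dead edges are deleted, by the footnote.
\end{itemize}
So the edge sets of the pieces partition the $\m(T)$ non-pendant edges, and $\m(T)=\sum_Q|E(Q)|$.

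The only point needing care is the absence of dead edges in the empty position, and the footnote already settles it. Suppose an edge $uv$ had no legal orientation. By \cref{lem:legal}, some endpoint, say $u$, has $d(u)=1$ and a status other than $\sff$, so $u$ is not special. Then $u$ has no pendant edges: a pendant edge at $u$ would make $u$ special, unless $u$ is itself the leaf, in which case $uv$ would be pendant and not in $F$. So $\deg_T u=1$, $u$ is a leaf, and $uv$ is pendant, a contradiction. Beyond this check the proof is purely bookkeeping.
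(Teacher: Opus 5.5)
Your proposal is correct and follows the paper's route. The paper reads the corollary off from \cref{prop:sum} applied to the empty position, whose components are by definition the pieces. It uses the footnote to rule out dead edges, so normalisation only splits special vertices and discards isolated vertices, and this partitions the $\m(T)$ non-pendant edges among the pieces. Your re-derivation of that footnote via \cref{lem:legal} is sound.
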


\section{Runs: nim heaps inside the Game of Arrows}\label{sec:runs}

The basic components are paths, and the computation in this section is the key to almost everything that follows.

\begin{definition}\label{def:run}
Let $k\ge 0$ and $\alpha,\beta\in\{\free,\rgt,\lft\}$. The \emph{run} $\R_k(\alpha,\beta)$ is the arrow game on the status-labelled path $v_0v_1\cdots v_k$, with edges $e_j=v_{j-1}v_j$, in which the interior vertices have status $\sn$. The ends are determined by $\alpha$ and $\beta$:
\begin{itemize}[leftmargin=*]
\item $\alpha=\free$ means $s(v_0)=\sff$;
\item $\alpha={\rgt}$ means $s(v_0)=\si$, so $e_1$ may only be oriented $v_0\to v_1$;
\item $\alpha={\lft}$ means $s(v_0)=\so$;
\item symmetrically, $\beta=\free,\rgt,\lft$ means $s(v_k)=\sff,\so,\si$.
\end{itemize}
In words, $\free$ is a \emph{free end}, and an arrow is an end \emph{blocked} in the indicated direction. The run $\R_0$ is the empty game.
\end{definition}

\begin{lemma}\label{lem:runmoves}
In $\R_k(\alpha,\beta)$, orienting $e_j$ in direction $\delta\in\{\rgt,\lft\}$ is legal if and only if both of the following hold: $j\ne1$ or $\alpha\in\{\free,\delta\}$; and $j\ne k$ or $\beta\in\{\free,\delta\}$. The resulting game is $\R_{j-1}(\alpha,\delta)+\R_{k-j}(\delta,\beta)$.
\end{lemma}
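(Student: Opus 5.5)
The plan is a direct application of \cref{lem:legal} to the path $v_0v_1\cdots v_k$, followed by a bookkeeping step that identifies the resulting status-labelled forest as a sum of two runs via \cref{lem:split} and \cref{prop:sum}. I will write $\rgt$ for the orientation $v_{j-1}\to v_j$ and $\lft$ for $v_j\to v_{j-1}$, so that orienting $e_j$ in direction $\delta$ makes $v_{j-1}$ the tail and $v_j$ the head when $\delta={\rgt}$, and conversely.

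\textbf{Legality.} By \cref{lem:legal}, the move needs a condition at the tail and one at the head. An interior vertex $v_i$ with $0<i<k$ has degree $2$ in the path, so its condition holds automatically. The only constraints therefore come from an endpoint $v_0$ or $v_k$ that is incident to $e_j$, which happens exactly when $j=1$ or $j=k$. (If $k=1$ both occur.) Take $j=1$, where $d(v_0)=1$.
\begin{itemize}[leftmargin=*]
\item If $\delta={\rgt}$, then $v_0$ is the tail and we need $s(v_0)\in\{\si,\sff\}$. In the notation of \cref{def:run} this says $\alpha\in\{\rgt,\free\}$.
\item If $\delta={\lft}$, then $v_0$ is the head and we need $s(v_0)\in\{\so,\sff\}$, which says $\alpha\in\{\lft,\free\}$.
\end{itemize}
In both cases the condition is $\alpha\in\{\free,\delta\}$. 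The end $v_k$ is handled the same way: the convention that $\beta={\rgt}$ means $s(v_k)=\so$ is chosen so that the condition reads $\beta\in\{\free,\delta\}$.

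\textbf{Resulting game.} After the move, $e_j$ is deleted and the forest splits into the paths $v_0\cdots v_{j-1}$ and $v_j\cdots v_k$. The new status of $v_{j-1}$ is $s\cdot\so$ or $s\cdot\si$, according to $\delta$.

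Suppose first that $v_{j-1}$ is interior, so its old status is $\sn$. For $\delta={\rgt}$, $v_{j-1}$ becomes $\so$, which is exactly the right-end label $\rgt$ of the left run. For $\delta={\lft}$, $v_{j-1}$ becomes $\si$, which is the right-end label $\lft$. Symmetrically, $v_j$ becomes $\si$ (respectively $\so$), which is the left-end label $\delta$ of the right run. In all interior cases the left path is $\R_{j-1}(\alpha,\delta)$ and the right path is $\R_{k-j}(\delta,\beta)$.

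Now suppose $v_{j-1}=v_0$, so $j=1$. Then the left path is the single vertex $v_0$, which is isolated and contributes nothing; this matches $\R_0$, the empty game. The same applies to $v_j$ when $j=k$. The two paths share no vertices, so by \cref{prop:sum} the game is their disjunctive sum.

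\textbf{Main obstacle.} The only delicate point is the direction conventions, and checking them is what the case analysis above does. At the left end, $\rgt$ corresponds to status $\si$, while at the right end $\rgt$ corresponds to $\so$. One has to confirm that a freshly created end of status $\so$ on the right, or $\si$ on the left, indeed carries the label $\delta$, consistently with the multiplication table in \cref{lem:legal}.
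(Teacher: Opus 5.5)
Your proposal is correct and follows the paper's proof. Legality is read off from \cref{lem:legal} using that interior vertices have degree $2$; the new end vertices receive statuses $\so$ and $\si$, which are exactly the end labels $\delta$ of the two runs; and the end vertex that becomes isolated disappears when $j\in\{1,k\}$ (your reference to \cref{lem:split} is not needed).
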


\begin{proof}
Interior vertices have $d=2$, so by \cref{lem:legal} only the ends impose conditions, and they are exactly the stated ones. Say $\delta={\rgt}$. Then $v_{j-1}$ gains an outgoing arc. If $j\ge2$, it becomes a vertex of status $\so$ with $d=1$, which is a right end of type $\rgt$ for the path $v_0\cdots v_{j-1}$. Likewise $v_j$ becomes a left end of type $\rgt$ for $v_j\cdots v_k$. If $j=1$ (resp.\ $j=k$), the end vertex becomes isolated and disappears. The case $\delta={\lft}$ is symmetric.
\end{proof}

Let $\bar\alpha$ denote $\alpha$ with the arrow reversed, where $\bar\free=\free$. Reflecting the path maps $\R_k(\alpha,\beta)$ to $\R_k(\bar\beta,\bar\alpha)$, and reversing all arrows maps $\R_k(\alpha,\beta)$ to $\R_k(\bar\alpha,\bar\beta)$. Both are isomorphisms of games. Composing them, $\R_k(\alpha,\beta)\cong\R_k(\beta,\alpha)\cong\R_k(\bar\alpha,\bar\beta)$. Hence there are only four Grundy sequences:
\[
  A_k=\gv\R_k(\free,\free),\quad B_k=\gv\R_k(\free,\rgt),\quad C_k=\gv\R_k(\rgt,\rgt),\quad D_k=\gv\R_k(\rgt,\lft).
\]

\begin{theorem}[Runs]\label{thm:runs}
For every $k\ge0$,
\[
  A_k=C_k=k \bmod 2,\qquad B_k=k,\qquad D_k=\begin{cases}1,& k\ge2 \text{ even},\\ 0,&\text{otherwise.}\end{cases}
\]
In particular, a run with one free end and one blocked end is equivalent to a nim heap whose size is its length.
\end{theorem}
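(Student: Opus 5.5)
The plan is strong induction on $k$, proving all four formulas simultaneously. By \cref{lem:runmoves}, every option of a run is a sum of two shorter runs, so by \cref{prop:sum} the Grundy value of each option is a nim-sum of earlier terms of $A,B,C,D$. I will first settle the base cases $k=0,1$ directly: $\R_0$ is empty; $\R_1(\free,\free)$ has two moves to the empty game; $\R_1(\free,\rgt)$ and $\R_1(\rgt,\rgt)$ each have exactly one move, the orientation $\rgt$; and $\R_1(\rgt,\lft)$ has no move. This gives $A_1=B_1=C_1=1$ and $D_1=0$.

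For $k\ge2$ I will list the option values using \cref{lem:runmoves} together with the symmetries $\R_k(\alpha,\beta)\cong\R_k(\beta,\alpha)\cong\R_k(\bar\alpha,\bar\beta)$, so that every summand is identified with one of $A,B,C,D$. Writing $i=j-1$ and $i+l=k-1$, the options are as follows.

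\begin{itemize}
\item \textbf{$A_k$.} The options are $B_i\oplus B_l$, where both directions give the same value.
\item \textbf{$B_k$, orientation $\rgt$.} The options are $B_i\oplus C_l$, legal for every $j$.
\item \textbf{$B_k$, orientation $\lft$.} The options are $B_i\oplus D_l$, legal for $j<k$, so $l\ge1$.
\item \textbf{$C_k$, orientation $\rgt$.} The options are $C_i\oplus C_l$, legal for every $j$.
\item \textbf{$C_k$, orientation $\lft$.} The options are $D_i\oplus D_l$, legal only for interior $j$, so $i,l\ge1$.
\item \textbf{$D_k$, orientation $\rgt$.} The options are $C_i\oplus D_l$, legal for $j<k$.
\item \textbf{$D_k$, orientation $\lft$.} The options are $D_i\oplus C_l$, legal for $j>1$.
\end{itemize}

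Here I use that $\R(\rgt,\rgt)$ and $\R(\lft,\lft)$ both have value $C$, and that $\R(\lft,\rgt)$ has value $D$.

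The parity-valued sequences are routine once the induction hypothesis is substituted.

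\begin{itemize}
\item \textbf{$A_k$ and $C_k$.} Let $k$ be even. Then $i+l$ is odd, so $i\oplus l$ is odd; for $C$, the values $D_i\oplus D_l$ with one index odd are $0\oplus1$ or $0\oplus0$. In either case $0$ appears but $1$ does not, wait — I will check carefully: for even $k$ the $D\oplus D$ terms can equal $1$ (e.g.\ $D_2\oplus D_1$), so I need that $0$ is missing rather than $1$. Redoing this: for $k$ even, all $B_i\oplus B_l=i\oplus l$ and all $C_i\oplus C_l$ are odd. For $C$, I need $D_i\oplus D_l\neq0$, but $D_1\oplus D_{k-2}$ may be $1$ while $D_3\oplus D_{k-4}$ is $0$. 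So the claim $C_k=0$ for even $k$ requires care, and this is a point where the computation must actually be done. The correct reading is that the stated values are what the induction must produce, and I will verify the $D\oplus D$ case by noting that exactly one index is odd, so the value is $0\oplus D_{\text{even}}$. This equals $0$ only when the even index is $0$, but $i,l\ge1$ rules that out. Hence $0$ is absent, the value $1$ is present (from $C_0\oplus C_{k-1}$), and $C_k=0$.
\item \textbf{$A_k$ and $C_k$ for $k$ odd.} Taking $j=1$ gives $0\oplus\ldots$ with sum index even. Both $B_0\oplus B_{k-1}=k-1$ (for $A$) and $C_0\oplus C_{k-1}=0$ are available, and $A$ has only even option values, so $A_k=1$. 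For $C$, the $D\oplus D$ terms have $i+l$ even, so both indices have the same parity. The values are $1\oplus1=0$ or $0\oplus0=0$, so $C_k=1$.
\item \textbf{$D_k$.} The values $C_i\oplus D_l$ reduce in the same way. For $k$ even, the option values are $(i\bmod2)\oplus D_l$; these never include $1$ and do include $0$, so $D_k=1$. For $k$ odd, the value $0$ is never reached, so $D_k=0$.
\end{itemize}

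\textbf{Main obstacle: $B_k=k$.} I must show that every value $0,\dots,k-1$ occurs among $i\oplus(l\bmod2)$ and $i\oplus D_l$, and that $k$ does not. The value $k-1$ comes from $l=0$, i.e.\ $B_{k-1}\oplus C_0$. The options $B_i\oplus C_l$ give $i$ or $i\oplus1$, and these are at most $i+1\le k$, with equality exactly when $i$ is even and $l$ is odd, which forces $i+l=k-1$ with $i=k-1-l$. For the count I will show that $\{i\oplus(l\bmod 2)\}$ with $i+l=k-1$ covers $\{0,\dots,k-1\}$: pairs $(i,l)$ with $l$ even give $i$ with $i\equiv k-1\pmod 2$, and pairs with $l$ odd give $i\oplus1$, which has the same parity as $i+1$. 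The hard part is excluding the value $k$, which requires $i\oplus1=k$ with $i=k-1$ even, i.e.\ $l=0$, a contradiction since $l$ is odd. The $D$ terms are $i$ or $i\oplus1$ with $i\le k-2$, so they are at most $k-1$. Hence $k$ is missing and $B_k=k$, completing the induction.
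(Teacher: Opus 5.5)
Your framework is the paper's: strong induction on $k$, with options read off from \cref{lem:runmoves} and identified with $A,B,C,D$ via the two symmetries. Your list of options is correct. Your computations of $C_k$ and $D_k$ are also fine, once you delete the mistaken aside that $D_3\oplus D_{k-4}$ could be $0$; the even index there is at least $2$.

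The real problem is the step you call the main obstacle. Your claim that the values $B_i\oplus C_l=i\oplus(l\bmod 2)$ with $i+l=k-1$ cover $\{0,\dots,k-1\}$ is false. If $l$ is even, then $i\equiv k-1$ and the value is $i$. If $l$ is odd, then $i\equiv k$ and the value $i\oplus1$ is again $\equiv k-1\pmod 2$. So these options only produce numbers of parity $k-1$: for $k=2$ they give $\{1\}$, and for $k=3$ they give $\{0,2\}$. The other parity comes only from the orientation $\lft$, i.e.\ from $B_i\oplus D_l$ with $l\ge1$, which you use only for the upper bound. The clean argument, as in the paper, pairs the two orientations of the same edge. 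For $1\le j\le k-1$ both are legal, and $\{C_{k-j},D_{k-j}\}=\{0,1\}$ by induction, so they give exactly $j-1$ and $(j-1)\oplus1$. Together with $B_{k-1}=k-1$ from $j=k$, the option set is precisely $\{0,1,\dots,k-1\}$. The value $k$ is absent because $(j-1)\oplus1\le j\le k-1$.

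A second gap is $A_k$ for odd $k\ge3$. All options $i\oplus l$ are even, so $1$ is absent. But $A_k=1$ also requires $0$ to be an option, and the only option of $\R_k(\free,\free)$ you exhibit is $B_0\oplus B_{k-1}=k-1\neq0$. The value $C_0\oplus C_{k-1}=0$ you cite is an option of $C_k$, not of $A_k$. You need the symmetric cut $j=(k+1)/2$, where $i=l=(k-1)/2$ and $B_i\oplus B_l=i\oplus i=0$. With these two repairs your induction proves the theorem.
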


\begin{proof}
We use strong induction on $k$. For $k=0$ all four values are $0$. Let $k\ge1$ and assume the formulas for all smaller indices. By the induction hypothesis, for $1\le h<k$,
\begin{equation}\label{eq:CD}
  C_h=h\bmod2,\qquad D_h=1-(h\bmod 2),\qquad\text{so}\qquad\{C_h,D_h\}=\{0,1\}.
\end{equation}
For $h=1$ this reads $C_1=1$ and $D_1=0$. We use repeatedly that $a\oplus b\equiv a+b\pmod 2$. All options below come from \cref{lem:runmoves}.

\smallskip\noindent\emph{$A_k$.} Every move gives $B_{j-1}\oplus B_{k-j}=(j-1)\oplus(k-j)$, which is congruent to $k-1$ modulo $2$. If $k$ is even, all options are odd, so $A_k=0$. If $k$ is odd, then $j=(k+1)/2$ gives the option $0$, and no option equals $1$, so $A_k=1$.

\smallskip\noindent\emph{$B_k$.} For $1\le j\le k-1$, the two directions give $B_{j-1}\oplus C_{k-j}$ and $B_{j-1}\oplus D_{k-j}$. By \eqref{eq:CD} these are $j-1$ and $(j-1)\oplus1$. For $j=k$ only the direction $\rgt$ is legal, giving $B_{k-1}=k-1$. So the set of options is
\[
  \{\,j-1,\ (j-1)\oplus 1 : 1\le j\le k-1\,\}\cup\{k-1\}=\{0,1,\dots,k-1\}.
\]
Here every value $0,\dots,k-2$ occurs as $j-1$, and $(j-1)\oplus 1\le j\le k-1$. Hence $B_k=k$.

\smallskip\noindent\emph{$C_k$.} The edges $e_1$ and $e_k$ may only be oriented $\rgt$; interior edges may be oriented either way. The direction $\rgt$ gives $C_{j-1}\oplus C_{k-j}$, which is congruent to $(j-1)+(k-j)$ modulo $2$ (for $j\in\{1,k\}$ one term is $C_0=0$). The direction $\lft$ at $2\le j\le k-1$ gives $D_{j-1}\oplus D_{k-j}$, which by \eqref{eq:CD} is also congruent to $(j-1)+(k-j)$ modulo $2$. So every option equals $(k-1)\bmod 2$, and at least one option exists. Hence $C_k=\mex\{(k-1)\bmod 2\}=k\bmod 2$.

\smallskip\noindent\emph{$D_k$.} If $k=1$, the only edge would have to be oriented both ways, so there is no move and $D_1=0$. Let $k\ge2$. Orienting $e_1$ (necessarily $\rgt$) or $e_k$ (necessarily $\lft$) gives $D_{k-1}$. For $2\le j\le k-1$, the options are $C_{j-1}\oplus D_{k-j}$ and $D_{j-1}\oplus C_{k-j}$, and by \eqref{eq:CD} both equal $1\oplus((k-1)\bmod 2)=k\bmod 2$. If $k=2$, the only option is $D_1=0$, so $D_2=1$. If $k\ge3$, then $D_{k-1}=1-((k-1)\bmod 2)=k\bmod 2$ as well, so all options equal $k\bmod2$ and $D_k=1-(k\bmod 2)$.
\end{proof}

\begin{corollary}[Paths]\label{cor:path}
A path with $n\ge3$ vertices has $\gv=(n-3)\bmod 2=\m\bmod2$. If $n\ge4$ and one markable edge has been marked, the remaining position has Grundy value of the same parity as $\m-1=n-4$. If the marked edge is one of the two end markable edges, the value is exactly $n-4$.
\end{corollary}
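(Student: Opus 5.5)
We reduce everything to \cref{thm:runs} through the piece decomposition. Let the path have vertices $u_0u_1\cdots u_{n-1}$ with $n\ge3$. The pendant edges are $u_0u_1$ and $u_{n-2}u_{n-1}$, and the only special vertices are $u_1$ and $u_{n-2}$; when $n=3$ these coincide. Every interior vertex $u_2,\dots,u_{n-3}$ has degree $2$ and status $\sn$.

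\emph{The empty position.} Deleting the pendant edges leaves a single piece, the path $u_1\cdots u_{n-2}$. It has $\m=n-3$ edges and both ends of status $\sff$, so it is the run $\R_{n-3}(\free,\free)$; for $n=3$ it is $\R_0$. By \cref{cor:pieces} and \cref{thm:runs}, $\gv=A_{n-3}=(n-3)\bmod 2$.

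\emph{One marked edge.} Now let $n\ge4$, so that $\m=n-3\ge1$. Mark the edge $e_j=u_ju_{j+1}$ of this run, where $1\le j\le n-3$, in some direction $\delta$. By \cref{lem:runmoves} the resulting position is the disjunctive sum
\[
\R_{j-1}(\free,\delta)+\R_{n-3-j}(\delta,\free).
\]
By \cref{prop:sum} and the reflection/reversal symmetries, its Grundy value is $B_{j-1}\oplus B_{n-3-j}=(j-1)\oplus(n-3-j)$. This is congruent to $(j-1)+(n-3-j)=n-4=\m-1$ modulo $2$, which gives the parity claim. If the marked edge is an end markable edge, say $j=1$ or $j=n-3$, then one summand is $B_0=0$ and the value is exactly $B_{n-4}=n-4$.

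\emph{Checks.} The only point needing care is that the identification of the position with a sum of runs is valid at the ends. When $j=1$, the vertex $u_1$ is special and stays $\sff$; by \cref{lem:runmoves} it is simply discarded, since it becomes isolated in the unmarked forest. We should also confirm that every marking of a markable edge is legal in the empty position, so that the statement is never vacuous. This holds because $u_1$ and $u_{n-2}$ are free, and every interior vertex has $d=2$ before the move. There is no real obstacle here, since all the work was done in \cref{thm:runs}.
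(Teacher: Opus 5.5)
Your proposal is correct and takes essentially the same route as the paper. The path has the single piece $\R_{n-3}(\free,\free)$, and marking its $j$-th edge leaves $\R_{j-1}(\free,\delta)+\R_{n-3-j}(\delta,\free)$ by \cref{lem:runmoves}. \Cref{thm:runs} then gives the value $(j-1)\oplus(n-3-j)$, which has the parity of $n-4$ and equals $n-4$ for $j\in\{1,n-3\}$. Your extra checks, on the special end vertices and on the legality of every first move, are accurate but are already built into the run framework.
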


\begin{proof}
The path has a single piece $\R_{n-3}(\free,\free)$. Marking its $j$-th edge leaves $\R_{j-1}(\free,\delta)+\R_{n-3-j}(\delta,\free)$, whose value is $(j-1)\oplus(n-3-j)\equiv n-4\pmod2$, and which equals $n-4$ when $j\in\{1,n-3\}$.
\end{proof}

\Cref{cor:path} recovers \cite[Theorem~5.1]{Lin2021} and \cite[Theorem~2.7, Corollary~2.8]{Zandee2022}. Zandee's Theorem~2.7 is precisely the statement $B_k=k$ for runs arising from a path. The point of \cref{thm:runs} is that every position on a tree decomposes into runs as soon as its branch vertices are resolved, and runs of type $B$ are nim heaps of unbounded size.

\section{Splitting, gluing and twins}\label{sec:consequences}

The run formulas have pleasant structural consequences. Vertices next to leaves act as hinges, where trees can be taken apart and glued together, and two identical branches at a vertex cancel each other out.

\subsection{Gluing}
Let $T_1,T_2$ be trees with special vertices $w_1\in T_1$ and $w_2\in T_2$. Let $T_1\circ T_2$ denote the tree obtained from the disjoint union by identifying $w_1$ with $w_2$. (The choice of $w_1,w_2$ is suppressed from the notation.)

\begin{theorem}[Gluing]\label{thm:glue}
$\gv(T_1\circ T_2)=\gv(T_1)\oplus\gv(T_2)$ and $\m(T_1\circ T_2)=\m(T_1)+\m(T_2)$.
\end{theorem}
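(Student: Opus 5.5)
The plan is to reduce everything to \cref{cor:pieces}, by showing that the pieces of $T_1\circ T_2$ are exactly the pieces of $T_1$ together with the pieces of $T_2$. Write $w$ for the glued vertex. First I will check that leaves and special vertices behave as expected under gluing. Since $w_1$ and $w_2$ are special, neither is a leaf, so $w$ has degree $\deg w_1+\deg w_2\ge 4$ and is not a leaf. Every other vertex keeps its degree and its neighbourhood. Hence the leaves of $T_1\circ T_2$ are exactly the leaves of $T_1$ and of $T_2$, and the pendant edges are exactly the pendant edges of $T_1$ and of $T_2$. The vertex $w$ is adjacent to a leaf (the leaf that made $w_1$ special), so $w$ is special. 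Every other vertex is special in $T_1\circ T_2$ if and only if it was special in its own tree.

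Next I will identify the pieces. By the description after \cref{prop:sum}, the pieces of $T_1\circ T_2$ are obtained by deleting all pendant edges and then cutting at every special vertex, with each copy of a special vertex receiving status $\sff$. Cutting at $w$ separates the edges coming from $T_1$ from those coming from $T_2$, since every edge at $w$ comes from exactly one of the two trees. The remaining cuts are at the special vertices of $T_1$ and of $T_2$, so the result is exactly the union of the two cutting procedures.

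One point needs care: the statuses. The status of $w_1$ in $T_1$ is $\sff$ because $w_1$ is special, and the same holds for $w_2$. After cutting, every copy of $w$ also has status $\sff$. Non-special non-leaf vertices keep status $\sn$ and keep their degree, because their degree in $T_1\circ T_2$ equals their degree in $T_i$. So each piece of $T_1\circ T_2$ is isomorphic, as a status-labelled path or tree, to a piece of $T_1$ or of $T_2$, and this correspondence is a bijection.

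Finally, \cref{cor:pieces} gives
\[
\gv(T_1\circ T_2)=\bigoplus_Q \gv(Q)=\Bigl(\bigoplus_{Q\subseteq T_1}\gv(Q)\Bigr)\oplus\Bigl(\bigoplus_{Q\subseteq T_2}\gv(Q)\Bigr)=\gv(T_1)\oplus\gv(T_2).
\]
The analogous sum of edge counts, $\m(T_1\circ T_2)=\sum_Q|E(Q)|$, splits in the same way and gives $\m(T_1)+\m(T_2)$. Alternatively, the identity for $\m$ follows directly because the pendant edges are the same. The only real obstacle is the bookkeeping that $w$ is special and that no vertex changes its leaf or special status; everything else is formal once \cref{cor:pieces} is available.
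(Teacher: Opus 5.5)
Your proposal is correct and follows the paper's proof essentially step for step. You check that the merged vertex is special and that leaves, pendant edges and all other special vertices are unchanged, deduce that the pieces of $T_1\circ T_2$ are exactly the pieces of $T_1$ together with those of $T_2$, and then apply \cref{cor:pieces}.
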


\begin{proof}
The merged vertex $w$ has degree at least $4$ and is adjacent to a leaf, so it is special. Every other vertex keeps its neighbourhood. Therefore the leaves, the pendant edges and the special vertices of $T_1\circ T_2$ are exactly those of $T_1$ and $T_2$, with $w_1$ and $w_2$ merged. Deleting the pendant edges and cutting at the special vertices yields the pieces of $T_1$ together with the pieces of $T_2$. Now apply \cref{cor:pieces}.
\end{proof}

\begin{remark}\label{rem:plusone}
A cheaper way to add $1$: attach a path $w\,a\,b$ with two edges at a special vertex $w$. This makes $a$ special, so the new markable edge $wa$ forms a piece $\R_1(\free,\free)$ by itself. The operation increases $\m$ by $1$ and nim-adds $1$ to the Grundy value, using only two new vertices.
\end{remark}

Every tree with at least three vertices has a special vertex, and the trees with at most two vertices have Grundy value $0$. Hence:

\begin{corollary}\label{cor:subgroup}
The set $\G=\{\gv(T): T\text{ a finite tree}\}$ is closed under $\oplus$, i.e.\ it is an $\mathbb{F}_2$-subspace of $(\NN,\oplus)$. In particular, if $2^0,2^1,\dots,2^{r-1}\in\G$, then $\{0,1,\dots,2^r-1\}\subseteq\G$.
\end{corollary}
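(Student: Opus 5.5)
The plan has two parts: closure of $\G$ under $\oplus$ from \cref{thm:glue}, then the statement about powers of two by linear algebra over $\mathbb{F}_2$.

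\emph{Closure.} Take $g_1=\gv(T_1)$ and $g_2=\gv(T_2)$ with $g_1,g_2\in\G$. We want a tree with Grundy value $g_1\oplus g_2$.
\begin{itemize}[leftmargin=*]
\item If both $T_i$ have at least three vertices, each has a special vertex. Such a tree is not a single edge or vertex, so the neighbour of any leaf is a non-leaf; this neighbour is the special vertex $w_i$. Gluing at $w_1,w_2$ and applying \cref{thm:glue} gives $\gv(T_1\circ T_2)=g_1\oplus g_2$.
\item If some $T_i$ has at most two vertices, it has no markable edges, so $g_i=0$. Then $g_1\oplus g_2$ is simply the other value, which already lies in $\G$.
\end{itemize}
Note also that $0\in\G$, witnessed by the one-vertex tree. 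Every element of $(\NN,\oplus)$ is its own inverse, so closure under $\oplus$ together with $0\in\G$ makes $\G$ a subgroup. In this group, being a subgroup is the same as being an $\mathbb{F}_2$-subspace, because scalar multiplication by $0$ and $1$ is automatic.

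\emph{The span.} Suppose $2^0,\dots,2^{r-1}\in\G$. Take any $n$ with $0\le n\le 2^r-1$ and write its binary expansion $n=\sum_{i\in I}2^i$ with $I\subseteq\{0,\dots,r-1\}$. The summands are distinct powers of two, so they have disjoint binary supports, and $n=\bigoplus_{i\in I}2^i$. Since $\G$ is closed under $\oplus$, induction on $|I|$ gives $n\in\G$; the case $I=\emptyset$ is $0\in\G$.

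The only step that needs any care is checking that gluing applies, namely that every tree with at least three vertices has a special vertex. The argument above settles it: the neighbour of a leaf cannot itself be a leaf in such a tree. Everything else is immediate.
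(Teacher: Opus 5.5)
Your proposal is correct and follows the paper's own route: gluing two trees at special vertices via \cref{thm:glue} (every tree with at least three vertices has one), with trees on at most two vertices having value $0$. The binary-expansion step for the span then gives $\{0,1,\dots,2^r-1\}\subseteq\G$ exactly as the paper intends.
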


\begin{proposition}[Reformulation]\label{prop:reform}
The following are equivalent.
\begin{enumerate}[label=\textup{(\roman*)}]
\item The parity conjecture holds for all trees.
\item $\gv(T)=\m(T)\bmod 2$ for every tree $T$.
\end{enumerate}
In particular, the parity conjecture implies $\gv(T)\le1$ for all trees $T$, so any single tree with Grundy value at least $2$ refutes it.
\end{proposition}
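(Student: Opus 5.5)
The direction (ii)$\Rightarrow$(i) is immediate. If $\gv(T)=\m(T)\bmod 2$, then $\gv(T)=0$ exactly when $\m(T)$ is even. Since the second player wins precisely when $\gv(T)=0$, this is \cref{conj:parity}.

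For (i)$\Rightarrow$(ii) I would argue by contradiction, using \cref{thm:glue} to amplify a single bad tree. Assume (i), and suppose some tree $T$ has $\gv(T)\ne \m(T)\bmod 2$. If $\m(T)$ is even, (i) gives $\gv(T)=0$, which contradicts the assumption directly. So $\m(T)$ is odd and $\gv(T)\ne 1$. By (i) we also have $\gv(T)\ne0$, hence $\gv(T)\ge2$.

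The task is then to build from $T$ a tree with an even number of markable edges and nonzero Grundy value. Since $\m(T)$ is odd, $T$ has a markable edge, so it has at least four vertices and, in particular, a special vertex $w$. Apply \cref{rem:plusone} at $w$, attaching a path $w\,a\,b$. The resulting tree $T'$ satisfies $\m(T')=\m(T)+1$, which is even, and $\gv(T')=\gv(T)\oplus1$. This is nonzero because $\gv(T)\ge2$. The tree $T'$ therefore violates (i), a contradiction. Alternatively, one can glue $T$ to a three-vertex path via \cref{thm:glue}, but the remark already packages exactly what is needed.

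For the final sentence: (ii) gives $\gv(T)\in\{0,1\}$ for every tree, so a tree with $\gv\ge2$ contradicts (ii) and hence (i).

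There is no real obstacle here. The only point needing care is that a special vertex exists, which holds because $\m(T)\ge1$ forces at least three vertices, and every tree with at least three vertices has a special vertex.
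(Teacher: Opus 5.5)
Your proof is correct and is essentially the paper's argument. The paper glues $T$ to the four-vertex path $P_4$ (with $\m(P_4)=\gv(P_4)=1$) via \cref{thm:glue}, which amounts to the same operation as your use of \cref{rem:plusone}.

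One slip in your aside: gluing a three-vertex path does nothing, since it has $\m=0$ and $\gv=0$ and its only special vertex is its middle vertex. The alternative via \cref{thm:glue} therefore needs $P_4$, as in the paper.
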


\begin{proof}
(ii)$\Rightarrow$(i) is clear. Assume (i), and suppose some tree $T$ has $\gv(T)\ne \m(T)\bmod2$. If $\m(T)$ were even, (i) would give $\gv(T)=0$. So $\m(T)$ is odd and $\gv(T)\notin\{0,1\}$, i.e.\ $\gv(T)\ge2$. Let $P_4$ be the path on four vertices, with $\m(P_4)=1$ and $\gv(P_4)=1$. By \cref{thm:glue}, $T\circ P_4$ has an even number of markable edges and Grundy value $\gv(T)\oplus1\ne0$, which contradicts~(i).
\end{proof}

Note that a tree with $\gv(T)\ge2$ violates the conjecture itself if $\m(T)$ is even. If $\m(T)$ is odd, then $T\circ P_4$ violates it.

\begin{corollary}\label{cor:leafbranch}
Suppose every vertex of degree at least $3$ in $T$ is adjacent to a leaf; for instance, $T$ is a path, a caterpillar, or a spider whose centre is adjacent to a leaf. Then $\gv(T)=\m(T)\bmod 2$.
\end{corollary}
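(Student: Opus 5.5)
The idea is to show that every piece of $T$ is a run $\R_k(\free,\free)$, and then read off the value from \cref{thm:runs} and \cref{cor:pieces}. By \cref{cor:pieces}, $\gv(T)=\bigoplus_Q \gv(Q)$ and $\m(T)=\sum_Q|E(Q)|$, where $Q$ runs over the pieces. Recall how the pieces arise. We delete all pendant edges and cut at every special vertex. In each piece, the leaves have status $\sff$, and every other vertex has status $\sn$ and keeps its degree from $T$.

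First I claim that every piece $Q$ is a path whose interior vertices have degree $2$ in $T$. Take a vertex $v$ of $Q$ that is not a leaf of $Q$. Then $v$ has status $\sn$, so it is not special. By hypothesis, $v$ therefore has degree at most $2$ in $T$. Its degree in $Q$ equals its degree in $T$ and is at least $2$, so it is exactly $2$. A finite tree whose non-leaf vertices all have degree $2$ is a path. Hence $Q$ is a path $v_0\cdots v_k$ with both ends of status $\sff$ and interior vertices of status $\sn$, so $Q\cong\R_k(\free,\free)$ with $k=|E(Q)|\ge1$.

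Next, \cref{thm:runs} gives $\gv(Q)=A_k=k\bmod 2$. The nim-sum of values in $\{0,1\}$ is the parity of their sum. Therefore
\[
\gv(T)=\bigoplus_Q (|E(Q)|\bmod 2)=\Bigl(\sum_Q|E(Q)|\Bigr)\bmod 2=\m(T)\bmod2 .
\]

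Finally, the degenerate cases. If $T$ has at most two vertices, then $T$ has no pieces and $\m(T)=0$, so the formula holds trivially. For the listed examples I would only check that they satisfy the hypothesis. A path has no vertex of degree $\ge3$. In a caterpillar I would take the spine vertices of degree $\ge 3$ to carry a pendant leaf, as intended. For a spider whose centre is adjacent to a leaf, the centre is the only vertex of degree $\ge3$.

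No step is a real obstacle. The only point needing care is the claim that a non-leaf vertex of a piece keeps its $T$-degree. This holds because every edge at a non-special vertex is non-pendant, so none of its edges is deleted or cut.
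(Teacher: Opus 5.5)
Your proof is correct and is the paper's argument: every vertex of degree at least $3$ is special, so each piece is a run $\R_k(\free,\free)$, and \cref{cor:pieces} together with $A_k=k\bmod 2$ from \cref{thm:runs} gives the result. One small remark: for a caterpillar the hypothesis follows from the definition and is not an extra assumption, since a spine vertex has at most two non-leaf neighbours, so any vertex of degree at least $3$ has a leaf neighbour.
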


\begin{proof}
Every vertex of degree at least $3$ is special, so all pieces are paths with free ends, i.e.\ runs $\R_k(\free,\free)$. By \cref{cor:pieces,thm:runs}, $\gv(T)=\bigoplus_Q (k_Q\bmod2)=\m(T)\bmod 2$.
\end{proof}

\subsection{Twin branches}
Let $c$ be a vertex of $T$. For a component $B$ of $T-c$, let $T[B+c]$ denote the subtree induced by $B\cup\{c\}$. Two distinct components $B_1,B_2$ of $T-c$ are \emph{twins at $c$} if there is a graph isomorphism $\varphi\colon T[B_1+c]\to T[B_2+c]$ with $\varphi(c)=c$.

\begin{theorem}[Twin branches]\label{thm:twin}
Let $B_1,B_2$ be twins at $c$. Let $T'$ be obtained from $T$ by deleting $B_1\cup B_2$ and attaching a new pendant edge at $c$. Then $\gv(T)=\gv(T')$ and $\m(T)\equiv\m(T')\pmod 2$.
\end{theorem}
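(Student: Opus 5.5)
The plan is a copycat argument, as announced in \cref{sec:prelim}: to get $\gv(T)=\gv(T')$ it suffices to show that $T+T'$ is a second-player win. The parity claim I would settle first, by direct counting.

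Write $H_i=T[B_i+c]$ and let $R$ be the part of $T$ outside $B_1\cup B_2$ (containing $c$). The edges of $T$ are those of $H_1$, $H_2$ and $R$. Those of $T'$ are the edges of $R$ plus the new pendant edge. Assume first that $c$ has some neighbour outside $B_1\cup B_2$, so $\deg_T(c)\ge 3$ and $\deg_{T'}(c)\ge 2$. Then $c$ is a leaf in neither tree, and every vertex other than $c$ keeps its degree. Hence an edge of $R$ is pendant in $T$ if and only if it is pendant in $T'$. An edge $e$ of $H_1$ is pendant if and only if $\varphi(e)$ is, because $\varphi$ preserves degrees away from $c$ and $c$ is not a leaf. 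So the non-pendant edges of $H_1\cup H_2$ come in pairs $\{e,\varphi(e)\}$, and $\m(T)=\m(T')+2\m_1$, where $\m_1$ counts the non-pendant edges of $H_1$. The degenerate case $\deg_T(c)=2$, i.e.\ $T=H_1\cup H_2$, I would treat separately. There $T'$ is a single edge with $\gv=0$ and $\m=0$. The same pairing gives $\m(T)$ even, and the copycat argument below with $R$ empty gives $\gv(T)=0$.

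For the Grundy value, the second player's strategy on $T+T'$ is as follows. If the opponent orients an edge of $R$ in one game, answer with the same orientation of the same edge in the other game. If the opponent orients an edge $e$ of $B_i\cup\{c\}$ in $T$, answer by orienting $\varphi^{\pm1}(e)$ in the \emph{reversed} direction. The invariant after each reply has three parts: the positions on $R$ in $T$ and in $T'$ coincide; the marked edges of $H_2$ are exactly the $\varphi$-images of those of $H_1$, with reversed arrows; and the pendant edge of $T'$ is unmarked. The invariant gives two useful facts. Every vertex of $B_2$ has the status of its $\varphi$-preimage with $\si$ and $\so$ swapped. And at $c$ the arcs from $H_1\cup H_2$ come in opposite pairs, so either $c$ already has both an incoming and an outgoing arc, or no edge of $H_1\cup H_2$ at $c$ is marked.

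The main work, and the step I expect to need most care, is checking with \cref{lem:legal} that every reply is legal.

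\emph{Moves on $R$.} A move on an edge of $R$ at $c$ is legal in $T$ if and only if it is legal in $T'$. In $T'$ the vertex $c$ is special, so its condition holds. In $T$, either $c$ has status $\sff$, or the twin edges at $c$ are unmarked, so $d(c)\ge 2$. In both cases the condition at $c$ holds in $T$ as well. At the other endpoint the local data agree in $T$ and $T'$ by the invariant.

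\emph{Moves in the twins.} For a move on $e\subseteq H_1$, the mirrored reversed move on $\varphi(e)$ meets the same conditions at every endpoint other than $c$. This uses the swapped statuses, and the fact that legality of $u\to v$ is symmetric under reversing all arrows and swapping $\si\leftrightarrow\so$. The opponent's move touches $B_1$ and possibly $c$, never $B_2$, so the pre-move symmetric state is still valid in $B_2$. If $e$ meets $c$, the opponent's move gives $c$ an arc in one direction. The reply at $c$ then gives it an arc in the opposite direction, so after the reply $c$ is neither a sink nor a source.

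Since the second player can always reply, the first player eventually runs out of moves. Hence $T+T'$ is a second-player win, and $\gv(T)=\gv(T')$.
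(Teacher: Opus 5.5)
Your proposal is correct and follows the paper's proof essentially step for step: the same copycat strategy on $T+T'$ (reversed mirror image on the twins, identical copy on $R$), the same invariant, and the same count $\m(T)=\m(T')+2\,\#\{\text{markable edges of }T[B_1+c]\}$. The differences are cosmetic: you check legality by type of move using \cref{lem:legal} rather than endpoint by endpoint via sinks and sources, and you treat the case $R=\{c\}$ separately, whereas the paper absorbs it into the general argument.
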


\begin{proof}
We show that $T+T'$ is a second-player win. Let $R$ be the vertex set of $T$ outside $B_1\cup B_2$. The tree $T[R]$ appears both in $T$ and in $T'$; let $\psi$ map each edge of $T[R]\subseteq T$ to the corresponding edge of $T'$. Extend $\varphi$ to an involution of $B_1\cup B_2\cup\{c\}$ by using $\varphi^{-1}$ on $B_2$. After each of her moves, the second player maintains the following invariant.
\begin{enumerate}[label=(\arabic*)]
\item For each edge $e$ of $T[B_1+c]$, the edges $e$ and $\varphi(e)$ are both unmarked, or both marked with \emph{opposite} orientations: $x\to y$ on $e$ if and only if $\varphi(y)\to\varphi(x)$ on $\varphi(e)$.
\item Every edge $e$ of $T[R]\subseteq T$ is in the same state as $\psi(e)$ in $T'$.
\end{enumerate}
The invariant holds at the start. When the first player marks an edge $e$, the second player answers as follows.
\begin{itemize}[leftmargin=*]
\item If $e$ lies in $T[B_1+c]$ or $T[B_2+c]$, she marks $\varphi(e)$ with the opposite orientation.
\item If $e$ lies in $T[R]\subseteq T$, she marks $\psi(e)$ with the same orientation.
\item If $e$ lies in $T'$, she marks $\psi^{-1}(e)$ with the same orientation. (The new pendant edge of $T'$ is never marked.)
\end{itemize}
By the invariant the answering edge $e^*$ is unmarked, and afterwards the invariant holds again. It remains to show that the answer is legal, i.e.\ that neither endpoint of $e^*$ is a sink or a source afterwards.

Let $w^*$ be an endpoint of $e^*$ with $w^*\ne c$, and let $w$ be the corresponding endpoint of $e$. The answering map ($\varphi$ or $\psi^{\pm1}$) restricts to a bijection from the edges at $w$ to the edges at $w^*$. The edge $e^*$ is not incident with $w$, so after both moves the edges at $w^*$ are in the states, or in the reversed states in the twin case, of the edges at $w$ just after the first player's legal move. So $w^*$ is neither a sink nor a source.

Now suppose $c$ is an endpoint of $e^*$.
\begin{itemize}[leftmargin=*]
\item If $e^*$ lies in $T'$, then $e^*\in\psi(T[R])$, so $R\ne\{c\}$. Hence $c$ has degree at least~$2$ in $T'$ and is adjacent to the new leaf, i.e.\ it is special. By \cref{lem:pendant} it is never a sink or source.
\item If $e^*$ lies in $T$, let $b_i$ be the neighbour of $c$ in $B_i$, so $\varphi(cb_1)=cb_2$. After the answer, by the invariant, the edges $cb_1$ and $cb_2$ are either both unmarked, in which case $c$ keeps an unmarked edge, or marked with opposite orientations relative to $c$, in which case $c$ has an incoming and an outgoing arc.
\end{itemize}
In every case the answer is legal. So the second player always has a reply, and since the game is finite she wins. Hence $\gv(T)\oplus\gv(T')=0$.

For the parity statement, note that $c$ is not a leaf of $T$, and the vertices of $B_i$ have the same degree in $T$ as in $T[B_i+c]$. So $\varphi$ maps the edges of $T[B_1+c]$ that are markable in $T$ bijectively onto those of $T[B_2+c]$. The new pendant edge of $T'$ is not markable. An edge of $T[R]$ at $c$ is markable in $T$ if and only if its image is markable in $T'$; if $R=\{c\}$ there are no such edges. Thus
\[
\m(T)=\m(T')+2\,\#\{\text{edges of } T[B_1+c] \text{ markable in } T\}.\qedhere
\]
\end{proof}

\begin{corollary}\label{cor:twinspider}
For all $x,z\ge1$, $\gv(\Sp(x,x,z))=z\bmod 2$. More generally,
\[\gv(\Sp(x,x,y_1,\dots,y_r))=(y_1+\dots+y_r)\bmod 2.\]
\end{corollary}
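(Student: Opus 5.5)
Apply \cref{thm:twin} at the centre $c$ of $\Sp(x,x,y_1,\dots,y_r)$, with $B_1,B_2$ the two legs with $x$ markable edges. Each is a path of $x+1$ edges hanging from $c$, so the map exchanging them and fixing $c$ is an isomorphism $T[B_1+c]\to T[B_2+c]$, and the legs are twins at $c$. The theorem replaces them by a single pendant edge at $c$. Call the resulting tree $T'$; then $\gv(T)=\gv(T')$.

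Next identify $T'$. Its centre $c$ carries the legs of lengths $y_1+1,\dots,y_r+1$ together with one pendant edge, so $c$ is special. We have $\deg_{T'}(c)=r+1\ge 2$, using $r\ge1$; if $r=1$, then $T'$ is simply a path. In either case, every vertex of degree at least $3$ in $T'$ is adjacent to a leaf, which is only $c$ if $r\ge2$. So \cref{cor:leafbranch} applies and gives $\gv(T')=\m(T')\bmod 2$.

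Finally, count markable edges. In $T'$ the new edge at $c$ is pendant. Each leg $y_t$ keeps its pendant last edge and has $y_t$ markable edges; in particular its edge at $c$ is markable when $y_t\ge1$, since $c$ is not a leaf. Thus $\m(T')=y_1+\dots+y_r$. Alternatively, use the parity statement of \cref{thm:twin}: $\m(T)=2x+\sum y_t$ has the same parity. Combining the three steps gives $\gv(\Sp(x,x,y_1,\dots,y_r))=(y_1+\dots+y_r)\bmod 2$. The special case $r=1$, $y_1=z$ is the first claim, where $T'$ is the path with $z+3$ vertices, consistent with \cref{cor:path}.

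The only point needing care is the degenerate shape of $T'$. When $r=1$, the tree $T'$ is a path rather than a spider, and one must check that $c$ still has degree at least $2$, so that it is special rather than a leaf. With $y_1\ge1$ this holds, and \cref{cor:leafbranch} covers paths explicitly. No other obstacle is expected.
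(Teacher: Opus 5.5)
Your proof is correct and follows essentially the same route as the paper: apply \cref{thm:twin} at the centre, observe that the centre becomes special, and conclude that the remaining pieces are the free runs $\R_{y_t}(\free,\free)$. Your appeal to \cref{cor:leafbranch} just packages that last step, and your separate treatment of the case $r=1$ (where $T'$ is a path) is a harmless extra check.
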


\begin{proof}
Replace the twin legs by a pendant edge. The centre becomes special, and the rest splits into the runs $\R_{y_t}(\free,\free)$.
\end{proof}

The first statement is a Grundy-value form of Lin's theorem on the spiders $T_{a,a,b}$~\cite[Theorem~5.3]{Lin2021}. \Cref{thm:twin} is also a tree analogue of the involution strategy of Alvarado et al.~\cite{Alvarado2021}. It shows that, as far as the parity conjecture is concerned, we may restrict attention to trees without twin branches.

\section{Spiders}\label{sec:spiders}

Spiders are where the action is. By \cref{cor:leafbranch,thm:twin}, the simplest pieces not covered so far are spiders with pairwise distinct legs and no leaf at the centre. We now describe all their positions. Fix the spider $\Sp(x_1,\dots,x_k)$ with centre $c$.

\subsection{Spider states}
Directions on a leg are measured relative to the centre: $\IN$ means towards $c$, and $\OUT$ means away from $c$. For $s\in\{\sn,\si,\so,\sff\}$ we write $s\cdot\IN=s\cdot\si$ and $s\cdot\OUT=s\cdot\so$; this is the status of $c$ after an arc at $c$ pointing into, respectively out of, $c$ (\cref{lem:legal}).

In any position, look along leg $t$ from the centre. Let $h_t$ be the number of unmarked edges before the first marked edge of the leg; if no edge of the leg is marked, count the unmarked edges before the pendant edge. The leg is \emph{attached} if $h_t\ge1$, i.e.\ if its edge at $c$ is unmarked. The \emph{far end} $\phi_t$ of the attached part is $\free$ if the leg contains no arc. Otherwise it is the direction ($\IN$ or $\OUT$) of the first arc, which by \cref{lem:runmoves} forces the direction of the adjacent unmarked edge. Everything beyond the first arc on the leg splits into runs, whose values are given by \cref{thm:runs}.

A \emph{spider state} is a pair $(s;L)$, where $s\in\{\sn,\si,\so,\sff\}$ is the status of $c$ and $L$ is the multiset of pairs $(h_t,\phi_t)$ over the attached legs. While $s=\sn$, all $k$ legs are attached.

For $h\ge0$, $\delta\in\{\IN,\OUT\}$ and $\phi\in\{\free,\IN,\OUT\}$, let $\rho(h;\delta,\phi)$ be the value of a detached run of $h$ edges whose near end is blocked in direction $\delta$ and whose far end is $\phi$. By \cref{thm:runs},
\[
\rho(h;\delta,\free)=B_h=h,\qquad \rho(h;\delta,\phi)=C_h \text{ if } \phi=\delta,\qquad \rho(h;\delta,\phi)=D_h \text{ otherwise}.
\]
Similarly, let $\rho_\free(h;\phi)$ be the value of a run with free near end and far end $\phi$. Thus $\rho_\free(h;\free)=A_h$ and $\rho_\free(h;\IN)=\rho_\free(h;\OUT)=B_h$.

Let $\Gamma(s;L)$ be the Grundy value of the component containing $c$. In the following degenerate cases it is given directly.
\begin{enumerate}[label=(N\arabic*),leftmargin=*]
\item If $s=\sff$, then $c$ splits and every attached leg becomes a run with free near end, so $\Gamma(\sff;L)=\bigoplus_{(h,\phi)\in L}\rho_\free(h;\phi)$.
\item If $s\in\{\si,\so\}$ and $L=\{(h,\phi)\}$ has one element, the remaining edge at $c$ is forced ($\OUT$ if $s=\si$, $\IN$ if $s=\so$). So $\Gamma(\si;L)=\rho(h;\OUT,\phi)$ and $\Gamma(\so;L)=\rho(h;\IN,\phi)$.
\item If $L=\emptyset$, then $\Gamma=0$.
\end{enumerate}

\begin{proposition}\label{prop:spiderrec}
In the remaining cases ($s=\sn$, or $s\in\{\si,\so\}$ with $|L|\ge2$), $\Gamma(s;L)$ is the mex of the following option values. There is one option for each attached leg $(h,\phi)\in L$, each $1\le j\le h$ and each $\delta\in\{\IN,\OUT\}$, except that $\delta=\phi$ is required when $j=h$ and $\phi\ne\free$. Its value is
\[
  \rho(h-j;\delta,\phi)\ \oplus\ \begin{cases}
  \Gamma\bigl(s;\ L-\{(h,\phi)\}+\{(j-1,\delta)\}\bigr), & j\ge2,\\[2pt]
  \Gamma\bigl(s\cdot\delta;\ L-\{(h,\phi)\}\bigr), & j=1.
  \end{cases}
\]
The Grundy value of the spider is $\gv(\Sp(x_1,\dots,x_k))=\Gamma(\sn;\{(x_1,\free),\dots,(x_k,\free)\})$.
\end{proposition}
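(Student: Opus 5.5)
The plan is to unwind the definition of $\Gamma$ as the Grundy value of the component containing $c$, using \cref{lem:runmoves}, \cref{lem:legal} and \cref{prop:sum}.

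First we check that a spider state really describes that component. In a state $(s;L)$, the component of $c$ consists of $c$ with status $s$, together with, for each attached leg $(h,\phi)$, a path of $h$ unmarked edges from $c$. The interior vertices of this path have status $\sn$ and degree $2$. Its far vertex has status $\sff$ when $\phi=\free$, because it is then the special vertex before the pendant edge. When $\phi\ne\free$, the far vertex has status $\si$ or $\so$ according to the first arc. In the cases covered by the proposition, namely $s=\sn$ (where $d(c)=k\ge3$) or $s\in\{\si,\so\}$ with $|L|\ge2$, we have $d(c)\ge2$. So by \cref{lem:legal} the vertex $c$ imposes no constraint on any move.

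The legal moves are therefore exactly the orientations of the edges $e_j$ ($1\le j\le h$) on an attached leg. Interior vertices impose nothing, and the only constraint sits at the far end. As in \cref{lem:runmoves}, if $j=h$ and $\phi\ne\free$, the direction must agree with $\phi$. If $\phi=\free$ the far vertex is special and there is no constraint. This gives exactly the listed set of options.

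Next we identify the resulting position. Orienting $e_j$ in direction $\delta$ cuts the leg. The part beyond $e_j$ is a run of $h-j$ edges. Its near end is blocked in direction $\delta$, read relative to the centre: the new arc fixes the status of the near vertex as in the proof of \cref{lem:runmoves}. Its far end is $\phi$. This run is a separate component, and by \cref{prop:sum} and \cref{thm:runs} it contributes $\rho(h-j;\delta,\phi)$.

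The rest depends on $j$.
\begin{itemize}
\item If $j\ge2$, the part $c\cdots v_{j-1}$ stays attached with $j-1$ unmarked edges. Its far end now has first arc of direction $\delta$, and $c$ is untouched. The new state is $(s;L-\{(h,\phi)\}+\{(j-1,\delta)\})$.
\item If $j=1$, the arc lies at $c$. By \cref{lem:legal} the status becomes $s\cdot\delta$, and the leg is detached. The new state is $(s\cdot\delta;L-\{(h,\phi)\})$.
\end{itemize}
The new state may be degenerate, falling under (N1)--(N3), which is consistent with how $\Gamma$ is defined there. Taking the mex over all options gives the claimed recursion.

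Finally, in the empty position $c$ has status $\sn$ (it is not special, since all $x_t\ge1$), and every leg is attached with $h_t=x_t$ and $\phi_t=\free$. So the whole board is a single component equal to $\Gamma(\sn;\{(x_t,\free)\})$.

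The main care point is orientation bookkeeping. We must verify that "blocked in direction $\delta$" for the detached run matches the convention for $\rho$, that the forced direction at $j=h$ really is $\delta=\phi$ in centre-relative terms, and that the status update $s\cdot\delta$ matches $\IN/\OUT$. We would check each of these against \cref{def:run} by a short case analysis on $\delta$. Everything else is a direct reading of the earlier lemmas.
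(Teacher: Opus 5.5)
Your proposal is correct and follows essentially the same route as the paper. You identify the component of $c$ from the state, use $d(c)\ge2$ to remove any constraint at $c$, read off the far-end constraint $\delta=\phi$ and the detached run value $\rho(h-j;\delta,\phi)$, treat $j\ge2$ and $j=1$ separately, and conclude with \cref{prop:sum}. Your extra checks, that the state determines the component and that the empty board is the state $(\sn;\{(x_t,\free)\})$, are details the paper leaves implicit.
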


\begin{proof}
Consider the move on the $j$-th edge (from the centre) of the attached part of a leg, in direction $\delta$. It detaches the $h-j$ unmarked edges beyond it, which form a run blocked in direction $\delta$ at the near end and with far end $\phi$. If $j\ge2$, the part between $c$ and the new arc stays attached, with $h$ replaced by $j-1$ and far end $\delta$. If $j=1$, the arc is at $c$ and changes its status to $s\cdot\delta$. The legality conditions come from \cref{lem:legal}. At $c$ there is no condition, because in the remaining cases $c$ has $d(c)=|L|\ge2$. At the far end, the condition is $\delta=\phi$ when $j=h$ and $\phi\ne\free$. Finally, apply \cref{prop:sum}.
\end{proof}

\begin{corollary}\label{cor:poly}
For each fixed $k$, the Grundy value of a $k$-legged spider with $n$ vertices can be computed in time $O(n^{k+1})$ and space $O(n^k)$.
\end{corollary}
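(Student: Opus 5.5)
The plan is a dynamic program over spider states, evaluated by the recursion of \cref{prop:spiderrec}; the real work is counting states. Let $n$ be the number of vertices, so every $h_t\le x_t<n$. A spider state $(s;L)$ consists of a status $s\in\{\sn,\si,\so,\sff\}$ and a multiset $L$ of at most $k$ pairs $(h,\phi)$. I will not store $L$ as a multiset. Instead I record, for each leg $t$, either ``detached'' or a pair $(h_t,\phi_t)$ with $0\le h_t\le x_t$. Then each leg has $O(x_t)$ possible values, and the number of states is at most $4\cdot\prod_t O(x_t+1)$. Since $\sum_t(x_t+1)=n-1$, the AM--GM inequality gives $\prod_t(x_t+1)\le(n/k)^k=O(n^k)$ for fixed $k$. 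This bounds the space by $O(n^k)$.

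Not every leg-by-leg record is reachable, and that only helps us. While $s=\sn$ all legs are attached, and a leg with $h_t=x_t$ must have $\phi_t=\free$. This restriction shrinks the table and does not affect the bound.

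Next I bound the time per state. The degenerate cases (N1)--(N3) are closed formulas. Each takes $O(k)$ time using \cref{thm:runs}, since $A_h,B_h,C_h,D_h$ and hence $\rho,\rho_\free$ are $O(1)$ expressions. In the remaining cases, \cref{prop:spiderrec} lists at most $\sum_t 2h_t\le 2n$ options. Each option value is a $\rho$ term, which is $O(1)$, nim-added to a table lookup $\Gamma$ of a strictly smaller state. Computing the mex of at most $2n$ numbers takes $O(n)$ time with a boolean array, because the mex is at most the number of options. So each state costs $O(n)$, and the total time is $O(n)\cdot O(n^k)=O(n^{k+1})$.

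Two points need care, and they are the main obstacle to check. The first is well-foundedness, which lets us evaluate states in a valid order. Every option either decreases $\sum_t h_t$ by at least $1$ (replacing $h$ by $j-1<h$) or detaches a leg. So ordering the states by $(\#\text{attached legs},\sum h_t)$ gives a topological order, and memoised recursion works. The second is encoding: a state must be indexed in $O(1)$ time, not $O(k)$. Since $k$ is fixed, $O(k)=O(1)$, so a mixed-radix index over the legs suffices.

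Finally, \cref{prop:spiderrec} identifies $\gv(\Sp(x_1,\dots,x_k))$ with $\Gamma$ of the initial state, so the value of this single table entry is the answer.
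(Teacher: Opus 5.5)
Your proposal is correct and follows essentially the same route as the paper: the paper bounds the number of spider states by the per-leg product $4\prod_t(3x_t+1)=O(n^k)$ and notes that each state has $O(n)$ options, each evaluated in constant time from \cref{thm:runs} and memoised values. Your additional remarks on a valid evaluation order (by number of attached legs, then $\sum_t h_t$), on constant-time indexing and on computing the mex in $O(n)$ time just make explicit details that the paper's two-line proof leaves implicit.
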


\begin{proof}
There are at most $4\prod_t(3x_t+1)=O(n^k)$ spider states, each with $O(n)$ options. Each option is evaluated in constant time from \cref{thm:runs} and memoised values.
\end{proof}

\begin{remark}[How large values arise]\label{rem:large}
Cutting an attached leg with free far end at its $j$-th edge creates the nim heap $\R_{h-j}(\rgt,\free)$ of value $h-j$. Hence, for $0\le t\le h-2$, the options of the state include the numbers $t\oplus\Gamma(s;\,L-\{(h,\free)\}+\{(h-t-1,\delta)\})$. If a long leg is cut near its middle, two large numbers of similar size are nim-added, and the result can be small. This is the mechanism behind the counterexamples, where a well-chosen cut produces an option of value $0$ or of some other prescribed small value. The second player's winning replies in \cref{tab:firstmoves} below are typically cuts of this kind.
\end{remark}

\section{Computations}\label{sec:computation}

Now for the experiments. The theory of the previous sections makes the search for counterexamples fast, and the computations in turn suggested most of the theorems proved in \cref{sec:s23,sec:onetwoj,sec:threads}.

\subsection{Programs and verification}\label{sec:programs}
All computations were made with the following programs. They are available as ancillary files (see the section on data and code availability at the end).
\begin{enumerate}[label=(P\arabic*),leftmargin=*]
\item \emph{Raw-rules solvers.} These are two independently written programs, one in Python and one in C++. Positions are vectors in $\{\text{unmarked},\to,\leftarrow\}^{E(T)}$, legality is checked literally (no vertex all-in or all-out), and Grundy values are memoised. No result of this paper is used.
\item \emph{Normal-form solver for arbitrary trees.} It implements \cref{prop:sum}, using canonical forms of status-labelled trees up to isomorphism and up to the global reversal $\si\leftrightarrow\so$.
\item \emph{Spider solvers} implementing \cref{prop:spiderrec}. There are four versions:
\begin{itemize}[leftmargin=*]
\item an array-based C++ solver for three legs;
\item a dictionary-based Python solver for three legs;
\item a hash-based C++ solver for any number of legs;
\item a solver for $\Sp(x,y,z)$ with $x,y$ fixed and $z$ growing, which organises the recursion around the long leg.
\end{itemize}
A fifth spider solver was written independently from the paper's text alone.
\end{enumerate}

\begin{table}[t]
\centering
\small
\begin{tabular}{p{6.4cm}p{6.6cm}}
\toprule
claim & confirmed by\\
\midrule
census, $n\le 16$ (\cref{tab:census}) & both raw-rules programs (P1), and (P2)\\
census, $17\le n\le 20$ & C++ raw-rules program (P1), and (P2)\\
\cref{thm:census}; trees in \cref{fig:cex,fig:large} & (P1) twice, (P2), and (P3) for the spiders\\
first moves of $\Sp(1,3,7)$ (\cref{tab:firstmoves}) & (P1) twice, (P2)\\
\cref{prop:zandee} & (P1) twice, (P2)\\
\cref{thm:runs,thm:glue,thm:twin} (sanity checks) & (P1) on hundreds of random instances\\
spider census $x\le y\le z\le 120$ (\cref{tab:spidervalues}) & two (P3) versions on all $295{,}240$ spiders\\
values $8,16,32$ used in \cref{thm:allvalues} & three (P3) versions\\
spider families (\cref{tab:families}) & two (P3) versions, all $92$ pairs, $z\le 600$\\
finite part of \cref{thm:table23} & \texttt{verify23.py} ($r\le 100$) and the independent \texttt{famstates.cpp} ($r\le 3000$)\\
\cref{thm:cert} & certificate program; closed forms cross-checked by an independent solver\\
\cref{thm:onetwoj} (proved by hand) & two solvers for $y\le 64$, $z\le 2000$; window of \cref{rem:onetwojwindow} for $y\le 160$\\
\cref{thm:clean} (proved by hand) & strategy checked on all positions of $\mathcal C$ with $\le 12$ vertices\\
\cref{thm:threads3} & raw-rules solver, (P2), and a third solver\\
$H$-tree values in \cref{sec:open} & two independent $H$-tree solvers\\
\bottomrule
\end{tabular}
\caption{Which programs confirmed which computational claim.}
\label{tab:verification}
\end{table}

\noindent In addition, the following cross-checks were made.
\begin{itemize}[leftmargin=*]
\item The two raw-rules programs agree tree by tree on all $13{,}187$ trees with $2$ to $15$ vertices.
\item They agree with (P2) on all $2287$ trees with $2$ to $13$ vertices.
\item (P2) and (P3) agree on all three-legged spiders with legs at most~$8$ and all four-legged spiders with legs at most~$4$.
\item The C++ and Python three-leg solvers agree on all $4960$ spiders with legs at most $30$.
\item The array-based and fixed-$x,y$ solvers agree for all $x\le6$, $y\le 8$, $z\le120$.
\item The array-based and hash-based solvers agree on a random sample of $300$ spiders with legs at most $30$ and on the spiders used in \cref{thm:allvalues}.
\end{itemize}
For the counterexample $\Sp(1,3,7)$, the ancillary files also contain a certificate that can be checked by hand. It lists the values $\Gamma$ of all $205$ spider states reachable from the empty position, together with the values of their options. Verifying it requires only \cref{thm:runs} and \cref{prop:spiderrec}.

\subsection{Census of small trees}

\begin{table}[t]
\centering
\small
\begin{tabular}{rrrrrrrrr}
\toprule
$n$ & trees & $\gv=0$ & $\gv=1$ & $\gv=2$ & $\gv=3$ & $\gv=4$ & $\gv=5$ & PC fails\\
\midrule
$2$--$11$ & 435 & 222 & 213 & 0 & 0 & 0 & 0 & 0 \\
12 & 551 & 273 & 278 & 0 & 0 & 0 & 0 & 0\\
13 & 1301 & 657 & 644 & 0 & 0 & 0 & 0 & 0\\
14 & 3159 & 1573 & 1586 & 0 & 0 & 0 & 0 & 0\\
15 & 7741 & 3887 & 3854 & 0 & 0 & 0 & 0 & 2\\
16 & 19320 & 9649 & 9671 & 0 & 0 & 0 & 0 & 7\\
17 & 48629 & 24353 & 24274 & 1 & 0 & 1 & 0 & 25\\
18 & 123867 & 61893 & 61962 & 5 & 3 & 4 & 0 & 82\\
19 & 317955 & 159059 & 158850 & 16 & 15 & 10 & 5 & 241\\
20 & 823065 & 411384 & 411522 & 56 & 51 & 28 & 24 & 726\\
\bottomrule
\end{tabular}
\caption{Grundy values of the Game of Arrows on all unlabelled trees with $n$ vertices. The last column counts the trees violating the parity conjecture (PC), i.e.\ those with $\gv=0$ and $\m$ odd, or $\gv\neq0$ and $\m$ even. This is not the same as $\gv\ne\m\bmod2$; for instance, at $n=18$ there are $84$ trees with $\gv\ne\m\bmod 2$. No Grundy value larger than $5$ occurs for $n\le 20$. The numbers of trees are OEIS A000055.}
\label{tab:census}
\end{table}

For integers $a_1,a_2,b_1,b_2\ge1$ and $\ell\ge1$, let $H(a_1,a_2\,|\,\ell\,|\,b_1,b_2)$ denote the tree with two branch vertices $u,v$ joined by a path of $\ell$ edges. The vertex $u$ carries two further legs with $a_1$ and $a_2$ markable edges, and $v$ carries two legs with $b_1$ and $b_2$ markable edges; each leg ends with a pendant edge. All edges between $u$ and $v$ are markable, so $\m=a_1+a_2+b_1+b_2+\ell$, and the tree has $a_1+a_2+b_1+b_2+\ell+5$ vertices.

\begin{theorem}\label{thm:census}
\leavevmode
\begin{enumerate}[label=\textup{(\alph*)}]
\item Every tree with at most $14$ vertices satisfies $\gv(T)=\m(T)\bmod2$.
\item Among the $7741$ trees with $15$ vertices, exactly two violate the parity conjecture: the spiders $\Sp(1,3,7)$ and $\Sp(1,4,6)$. Both have $\m=11$ and $\gv=0$.
\item Every tree with at most $16$ vertices has Grundy value $0$ or $1$. Exactly two trees with $17$ vertices have larger Grundy value, namely the trees $H(3,1\,|\,3\,|\,4,1)$ and $H(5,2\,|\,1\,|\,3,1)$ of \cref{fig:large}. Their Grundy values are $2$ and $4$, while $\m=12$ for both, so both violate the parity conjecture.
\item The complete distribution of Grundy values for $n\le 20$ is given in \cref{tab:census}.
\end{enumerate}
\end{theorem}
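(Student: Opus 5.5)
This theorem is a computational census, so the proof is an exhaustive computation organised so that it can be trusted. The plan has four stages: enumerate the trees, compute their values in two independent ways, cross-check, and extract the small certificates that a reader can verify.

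First, we enumerate all unlabelled trees on $n\le 20$ vertices, for instance with the Wright--Richmond--Odlyzko--McKay constant-time generation algorithm. We check the counts against OEIS A000055, which confirms that no tree is missing or duplicated.

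Second, we compute $\gv(T)$ for each tree in two genuinely different ways.
\begin{itemize}
\item The raw-rules solver (P1) memoises over positions in $\{\text{unmarked},\to,\lft\}^{E(T)}$, checks legality literally and uses none of our theory. It is run with both independent implementations for $n\le16$.
\item The normal-form solver (P2) applies \cref{cor:pieces} and \cref{prop:sum}. It cuts $T$ into pieces at special vertices, recurses on canonical forms of status-labelled forests, and memoises across all trees. The memoisation pays off because pieces recur constantly.
\end{itemize}
Wherever (P1) becomes too slow, two things keep it feasible. First, pendant edges are never marked (\cref{lem:pendant}), so each position needs only $3^{\m(T)}$ states. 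Second, we can still run (P1) piecewise on pieces and nim-add by \cref{cor:pieces}, which stays theory-light because it uses only the elementary splitting lemma. Alongside $\gv(T)$ we record $\m(T)$. We then tabulate three counts for each $n$: the trees with each Grundy value, the violations of \cref{conj:parity} (meaning $\gv=0$ with $\m$ odd, or $\gv\ne0$ with $\m$ even), and the trees with $\gv\ne\m\bmod 2$. Parts (a) and (d) are read off from this table.

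Third, for parts (b) and (c) we extract the exceptional trees and identify them structurally. At $n=15$ the violators should be exactly $\Sp(1,3,7)$ and $\Sp(1,4,6)$. At $n=17$ the trees with $\gv\ge2$ should be exactly $H(3,1\,|\,3\,|\,4,1)$ and $H(5,2\,|\,1\,|\,3,1)$, with $\m=12$. We recompute these special trees with the spider solver (P3) via \cref{prop:spiderrec}. For $\Sp(1,3,7)$ we also produce the hand-checkable certificate: the $205$ reachable spider states, each with its value $\Gamma$ and its option values, verifiable from \cref{thm:runs} and \cref{prop:spiderrec} alone. For the $H$-trees, a similar certificate would come from cutting at $u$ or $v$ and tabulating options. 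Part (a) also yields a cheap consistency check: at $n\le14$ it must agree with \cref{cor:leafbranch} and \cref{thm:twin} wherever those results apply.

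The main obstacle is trustworthiness and cost at $n=17$--$20$, where there are over $800{,}000$ trees and a raw search can have up to $3^{17}$ positions per tree. There I would rely on the C++ version of (P1) with bitmask positions, and confirm every value against (P2). Any disagreement between the two solvers would be investigated tree by tree, and the theorem is asserted only where all independent programs agree.
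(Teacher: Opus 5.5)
Your plan is essentially the paper's: the census is an exhaustive computation over all trees with $n\le 20$ (counts checked against OEIS A000055). It uses the raw-rules solver (P1), with both independent implementations up to $n=16$ and the C++ one up to $n=20$, cross-checked against the normal-form solver (P2), with the exceptional spiders rechecked by (P3) and the hand-checkable $205$-state certificate for $\Sp(1,3,7)$. The one deviation is your optional fallback of running (P1) piece by piece. The paper does not need it, and it would make that check depend on \cref{prop:sum} instead of being theory-free.
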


\begin{figure}[t]
\centering
\begin{tikzpicture}[scale=0.62]
\begin{scope}
\node[ctr] (u) at (0,0) {}; \node at (0,0.5) {$u$};
\node[v] (p1) at (1,0) {}; \node[v] (p2) at (2,0) {};
\node[ctr] (w) at (3,0) {}; \node at (3,0.5) {$v$};
\draw (u)--(p1)--(p2)--(w);
\node[v] (a1) at (-1,0.8) {}; \node[v] (a2) at (-2,0.8) {}; \node[sp] (a3) at (-3,0.8) {}; \node[lf] (a4) at (-4,0.8) {};
\draw (u)--(a1)--(a2)--(a3); \draw[pend] (a3)--(a4);
\node[sp] (b1) at (-1,-0.8) {}; \node[lf] (b2) at (-2,-0.8) {};
\draw (u)--(b1); \draw[pend] (b1)--(b2);
\foreach \i in {1,2,3} {\node[v] (c\i) at (3+\i,0.8) {};}
\node[sp] (c4) at (7,0.8) {}; \node[lf] (c5) at (8,0.8) {};
\draw (w)--(c1)--(c2)--(c3)--(c4); \draw[pend] (c4)--(c5);
\node[sp] (d1) at (4,-0.8) {}; \node[lf] (d2) at (5,-0.8) {};
\draw (w)--(d1); \draw[pend] (d1)--(d2);
\node at (2,-1.9) {$H(3,1\,|\,3\,|\,4,1)$: $\m=12$, $\gv=2$};
\end{scope}
\begin{scope}[xshift=15cm]
\node[ctr] (u) at (0,0) {}; \node at (0,0.5) {$u$};
\node[ctr] (w) at (1,0) {}; \node at (1,0.5) {$v$};
\draw (u)--(w);
\foreach \i in {1,...,4} {\node[v] (a\i) at (-\i,0.8) {};}
\node[sp] (a5) at (-5,0.8) {}; \node[lf] (a6) at (-6,0.8) {};
\draw (u)--(a1); \foreach \i [evaluate=\i as \j using int(\i+1)] in {1,...,4} {\draw (a\i)--(a\j);}
\draw[pend] (a5)--(a6);
\node[v] (b1) at (-1,-0.8) {}; \node[sp] (b2) at (-2,-0.8) {}; \node[lf] (b3) at (-3,-0.8) {};
\draw (u)--(b1)--(b2); \draw[pend] (b2)--(b3);
\node[v] (c1) at (2,0.8) {}; \node[v] (c2) at (3,0.8) {}; \node[sp] (c3) at (4,0.8) {}; \node[lf] (c4) at (5,0.8) {};
\draw (w)--(c1)--(c2)--(c3); \draw[pend] (c3)--(c4);
\node[sp] (d1) at (2,-0.8) {}; \node[lf] (d2) at (3,-0.8) {};
\draw (w)--(d1); \draw[pend] (d1)--(d2);
\node at (-0.5,-1.9) {$H(5,2\,|\,1\,|\,3,1)$: $\m=12$, $\gv=4$};
\end{scope}
\end{tikzpicture}
\caption{The two trees with $17$ vertices whose Grundy value exceeds $1$. Both are counterexamples to the parity conjecture: $\m=12$ is even, but the Grundy value is non-zero.}
\label{fig:large}
\end{figure}

\Cref{tab:firstmoves} lists every first move on $\Sp(1,3,7)$ up to the global reversal of arrows. For each it gives the Grundy value of the resulting position and the winning replies of the second player. The legs are named $X,Y,Z$, with $1,3,7$ markable edges, as in \cref{fig:cex}. Edges are numbered from the centre, and $\IN/\OUT$ are as in \cref{sec:spiders}.

Every first move leads to a position with even, \emph{non-zero} Grundy value. These values can be traced back to \cref{thm:runs}. For instance, after the first player plays $Z_4\OUT$, the cut leaves the nim heap $\R_3(\rgt,\free)$ of value $3$. The reply $Z_2\IN$ then leaves a dead edge and a centre component of value $3$, and the total nim-sum is $0$.

\begin{table}[t]
\centering
\small
\begin{tabular}{lcl@{\qquad}lcl}
\toprule
first move & value & winning replies & first move & value & winning replies\\
\midrule
$X_1\OUT$ & 2 & $Z_3\IN$ & $Z_2\OUT$ & 6 & $Z_4\IN$, $Z_5\IN$\\
$Y_1\OUT$ & 2 & $Y_3\IN$ & $Z_3\OUT$ & 6 & $X_1\IN$, $Z_5\IN$\\
$Y_2\OUT$ & 6 & $Z_6\OUT$, $Z_7\OUT$ & $Z_4\OUT$ & 4 & $Z_2\IN$\\
$Y_3\OUT$ & 6 & $Y_1\IN$, $Z_6\OUT$, $Z_7\OUT$ & $Z_5\OUT$ & 6 & $Z_2\IN$, $Z_3\IN$\\
$Z_1\OUT$ & 6 & $Z_6\IN$, $Z_7\IN$ & $Z_6\OUT$ & 4 & $Y_2\OUT$, $Y_3\OUT$, $Z_1\IN$\\
 & & & $Z_7\OUT$ & 8 & $Y_2\OUT$, $Y_3\OUT$, $Z_1\IN$\\
\bottomrule
\end{tabular}
\caption{All first moves on $\Sp(1,3,7)$ up to reversal of all arrows; the move $E\,\IN$ has the same value as $E\,\OUT$, with all replies reversed. Since $0$ never occurs as the value of a first move, $\gv(\Sp(1,3,7))=0$ although $\m=11$ is odd.}
\label{tab:firstmoves}
\end{table}

\subsection{Zandee's conjecture on one marked edge}
Zandee~\cite[Conjecture~4.3]{Zandee2022} conjectured the following. If a tree $T$ has exactly one marked edge, then the Grundy value of the resulting position has the same parity as $\m(T)-1$. For paths this follows from \cref{thm:runs} (see \cref{cor:path}).

\begin{proposition}\label{prop:zandee}
Zandee's conjecture fails. In $\Sp(1,3,4)$, which has $12$ vertices and $\m=8$, mark the outermost markable edge of the leg with $3$ markable edges, pointing away from the centre. The resulting position has Grundy value $4$, but $\m-1=7$ is odd. The trees $\Sp(1,3,4)$ and $\Sp(2,3,3)$ are the only counterexamples with at most $12$ vertices, and there are none with at most $11$ vertices.
\end{proposition}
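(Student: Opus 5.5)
The proof splits into two parts: a hand computation showing that the specified position on $\Sp(1,3,4)$ has Grundy value $4$, and a finite search for the minimality claim. Label the legs $X,Y,Z$ with $1,3,4$ markable edges. The move is $Y_3\OUT$, the edge of $Y$ next to the special vertex. By \cref{prop:spiderrec} it detaches a run $\rho(0;\OUT,\free)=0$. The centre component is then the spider state $\Gamma(\sn;\{(1,\free),(2,\OUT),(4,\free)\})$, and we must show this equals $4$.

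We compute it by the recursion of \cref{prop:spiderrec}, using the run values of \cref{thm:runs} for every detached part. The states reachable from this position are few. They are generated by cuts on $Z$ (which produce nim heaps $B_h=h$ of sizes up to $3$), by moves on $Y$'s attached pair of edges, and by the single move on $X$. Each move at the centre drops to status $\si$ or $\so$, and from there eventually to the degenerate cases (N1)--(N3).

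The concrete steps are as follows.
\begin{enumerate}
\item Tabulate $\Gamma$ for all states with $s\in\{\si,\so,\sff\}$ and $|L|\le 3$ that can arise. The cases $s=\sff$ and $|L|=1$ are immediate from (N1)--(N2). The remaining states with two attached legs are small mex computations.
\item Compute the states with $s=\sn$ in increasing total length.
\item Check that the option set of the target state contains $0,1,2,3$ but not $4$.
\end{enumerate}
As in \cref{rem:large}, the large values should come from $Z$-cuts leaving heaps $3,2,1$ nim-added to small centre values. To witness the missing $4$, we argue that every option value is at most $3\oplus 1$-type combinations whose binary carries cannot reach $4$. In practice this is done by listing the options, as in \cref{tab:firstmoves}.

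For the second part, the statements about $\Sp(2,3,3)$ and about minimality are exhaustive. For every tree with at most $12$ vertices (\cref{tab:census} gives $986$ of them) and every single marked edge in either orientation, we compute the Grundy value with the raw-rules programs (P1) and the normal-form solver (P2). We then compare its parity with $\m-1$. For $\Sp(2,3,3)$ we also exhibit the specific offending move and its value.

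The search can be reduced using the earlier theory. After one marked edge, \cref{prop:sum} splits the position into the untouched pieces, which have value $\m\bmod 2$ by \cref{cor:leafbranch} whenever that corollary applies, together with runs. The parity can fail only when a component contains a blocked run of type $B$ of length at least $2$, interacting with a branch vertex. \cref{thm:twin} eliminates many further candidates.

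The main obstacle is the hand verification of the value $4$. The state space, though small, has enough interlocking states that errors are easy to make. I would handle it by organising the computation as a certificate, like the one provided for $\Sp(1,3,7)$, and cross-checking it by machine. The exhaustive part is routine given the verified solvers.
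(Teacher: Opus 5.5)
Your proposal is correct and follows the paper. The paper's justification is exactly the exhaustive machine check over all markings of the $986$ trees with at most $12$ vertices, done with the raw-rules solvers (P1), run twice, and the normal-form solver (P2) (see \cref{tab:verification}). Your reduction of the headline position to $\Gamma(\sn;\{(1,\free),(2,\OUT),(4,\free)\})$ is right, and its value needs no fresh hand computation. This state is $\Lambda(2)$ of \cref{lem:L5} with $j=2$, and since $1\notin S_2$ we get $\varepsilon_2(2)=0$, hence $\Lambda(2)=2+4-2=4$; the option set is exactly $\{0,1,2,3\}$.

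Do not use your suggested pruning, though. \cref{thm:twin} concerns the empty board, and its copycat argument says nothing when the marked edge lies inside a twin branch. That is exactly how $\Sp(2,3,3)$ fails: the offending position is $V^{\OUT}_{\kappa_0}(2)=6$ in \cref{tab:blocked23}. Your claim that parity can fail only in the presence of a blocked run of type $B$ of length at least $2$ is unproven. Read literally, it is false for both counterexamples, where the position after the marking is a single centre component whose detached run is empty.
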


In $\Sp(2,3,3)$, marking the outermost markable edge of a leg with three markable edges gives the value $6$. The parity conjecture itself holds for both trees: $\gv(\Sp(1,3,4))=\gv(\Sp(2,3,3))=0$ and $\m=8$. Note also that a counterexample to one of the two conjectures need not be a counterexample to the other. For instance, all first moves on the counterexample $\Sp(1,3,7)$ lead to even values (\cref{tab:firstmoves}), as Zandee's conjecture predicts. By contrast, the other $15$-vertex counterexample $\Sp(1,4,6)$ has a first move with the odd value $7$, namely cutting its longest leg at the third edge, although $\m-1=10$.

\subsection{Large Grundy values}

\begin{table}[t]
\centering
\small
\begin{tabular}{rl@{\qquad}rl@{\qquad}rl}
\toprule
$g$ & spider & $g$ & spider & $g$ & spider\\
\midrule
1 & $\Sp(1,1,1)$ & 11 & $\Sp(11,26,29)$ & 21 & $\Sp(15,84,85)^{\dagger}$\\
2 & $\Sp(3,4,7)$ & 12 & $\Sp(7,23,28)$ & 22 & $\Sp(25,48,59)^{\dagger}$\\
3 & $\Sp(2,3,9)$ & 13 & $\Sp(13,24,29)$ & 23 & $\Sp(21,58,77)^{\dagger}$\\
4 & $\Sp(5,8,9)$ & 14 & $\Sp(7,29,34)$ & 24 & $\Sp(12,25,31)$\\
5 & $\Sp(2,5,21)$ & 15 & $\Sp(21,24,39)$ & 26 & $\Sp(21,53,64)^{\dagger}$\\
6 & $\Sp(5,9,10)$ & 16 & $\Sp(13,44,51)$ & 27 & $\Sp(17,86,87)^{\dagger}$\\
7 & $\Sp(9,14,16)$ & 17 & $\Sp(15,27,36)$ & 28 & $\Sp(23,49,62)^{\dagger}$\\
8 & $\Sp(7,12,19)$ & 18 & $\Sp(13,45,48)$ & 30 & $\Sp(39,101,116)^{\dagger}$\\
9 & $\Sp(9,14,17)$ & 19 & $\Sp(15,34,37)$ & 32 & $\Sp(17,90,95)^{\dagger}$\\
10 & $\Sp(7,13,18)$ & 20 & $\Sp(40,45,87)^{\dagger}$ & & \\
\bottomrule
\end{tabular}
\caption{For each value $g$, the three-legged spider with the fewest markable edges ($\m=x+y+z$) having Grundy value $g$, among all spiders with legs of at most $120$ markable edges; ties are broken lexicographically. For entries without $\dagger$ this is the minimum over \emph{all} three-legged spiders, because $\m\le 122$ forces every leg to be at most $120$. The values $25,29,31$ and all values above $32$ do not occur in this range.}
\label{tab:spidervalues}
\end{table}

We computed $\gv(\Sp(x,y,z))$ for all $1\le x\le y\le z\le 120$, a total of $295{,}240$ spiders. The values $0,\dots,24$, $26,27,28,30,32$ occur, and \cref{tab:spidervalues} lists the smallest example of each.

\begin{theorem}\label{thm:allvalues}
Every integer $0\le g\le 63$ is the Grundy value of the Game of Arrows on some tree. In particular, there are trees with Grundy value $4$ or larger, which answers the question of Fokkink and Zandee~\cite{Fokkink2024}. The smallest tree with Grundy value at least $2$ has $17$ vertices, and the same holds for Grundy value $4$.
\end{theorem}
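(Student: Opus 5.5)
The plan is to combine the computed spider values with the subspace structure of \cref{cor:subgroup}. Since $\G$ is an $\mathbb{F}_2$-subspace of $(\NN,\oplus)$, it suffices to exhibit trees whose Grundy values are the powers $2^0,2^1,\dots,2^5$, that is, $1,2,4,8,16,32$. Then every $g$ with $0\le g\le 63$ is a nim-sum of a subset of these, and the corresponding tree is obtained by gluing the chosen trees at special vertices, one after another, using \cref{thm:glue}. Each spider $\Sp(x,y,z)$ with all $x,y,z\ge1$ has a special vertex (the vertex next to the leaf on each leg), so gluing is always possible. The value $0$ is realised by the single vertex or by $\Sp(1,1,2)$, using \cref{cor:twinspider}.

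For the six generators I will take the entries of \cref{tab:spidervalues}: $\Sp(1,1,1)$ for $1$, $\Sp(3,4,7)$ for $2$, $\Sp(5,8,9)$ for $4$, $\Sp(7,12,19)$ for $8$, $\Sp(13,44,51)$ for $16$ and $\Sp(17,90,95)$ for $32$. The value $1$ is also immediate by hand from \cref{cor:leafbranch} or \cref{rem:plusone}. The remaining five values are computational facts, obtained by evaluating the recursion of \cref{prop:spiderrec}, which is correct by \cref{prop:sum} and \cref{thm:runs}. Their reliability rests on the cross-checks recorded in \cref{tab:verification}, where these values were confirmed by three (P3) versions. This step is the main obstacle. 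It is not conceptually hard, but the argument is only as trustworthy as the computation. In particular, $\Sp(17,90,95)$ has on the order of $10^5$ states per centre status, so hand verification is out of the question. I would state explicitly that the theorem is computer-assisted at exactly this point.

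The \textbf{in particular} clause follows, because the value $4$ (and every larger value) is realised. For the minimality statement I invoke \cref{thm:census}(c). Every tree with at most $16$ vertices has Grundy value in $\{0,1\}$. Some $17$-vertex tree, $H(3,1\,|\,3\,|\,4,1)$, has value $2$, and another, $H(5,2\,|\,1\,|\,3,1)$, has value $4$. So the smallest tree with value at least $2$ has $17$ vertices. Likewise, the smallest tree with value $4$ (or at least $4$) has $17$ vertices, since no smaller tree exceeds $1$. This part again relies on the exhaustive census, confirmed by the raw-rules programs (P1) and by (P2) independently of the spider solvers.
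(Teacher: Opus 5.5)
Your proposal is correct and follows the paper's proof: the same six spiders from \cref{tab:spidervalues} realise $1,2,4,8,16,32$, they are glued at special vertices via \cref{thm:glue} (equivalently \cref{cor:subgroup}), and the minimality claims are read off from \cref{thm:census}(c). One harmless slip: $\Sp(17,90,95)$ has roughly $10^6$ states per centre status, not $10^5$ (with centre status $\sn$ alone there are $33\cdot179\cdot189\approx1.1\times10^6$), but this aside plays no role in the argument.
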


\begin{proof}
By \cref{tab:spidervalues}, the values $1,2,4,8,16,32$ are attained by the spiders $\Sp(1,1,1)$, $\Sp(3,4,7)$, $\Sp(5,8,9)$, $\Sp(7,12,19)$, $\Sp(13,44,51)$ and $\Sp(17,90,95)$. Write $g$ in binary and glue the corresponding spiders at special vertices. By \cref{thm:glue} the resulting tree has Grundy value $g$. The last statement is \cref{thm:census}(c).
\end{proof}

\Cref{tab:census} also shows that the value $3$ first occurs only at $18$ vertices, later than the value $4$; a smallest example is $\Sp(2,3,9)$. The value $5$ first occurs at $19$ vertices. An example is obtained from $H(5,2\,|\,1\,|\,3,1)$ by attaching a path with two edges at a special vertex (\cref{rem:plusone}). Its pieces are the piece of $H(5,2\,|\,1\,|\,3,1)$, of value $4$, and a single edge, of value $1$.

\subsection{Families of spiders}\label{sec:families}
For fixed $x\le y$ we computed $z\mapsto\gv(\Sp(x,y,z))$ for $1\le z\le 600$. We did this for all $78$ pairs with $1\le x\le y\le 12$ and for the $14$ pairs with $x\le 7$ and $y\in\{13,14\}$, $92$ pairs in total. \Cref{tab:families} summarises the results for $y\le 12$. Three behaviours occur.

\begin{enumerate}[label=(\roman*),leftmargin=*]
\item If $x=y$, then \cref{cor:twinspider} gives $\gv=z\bmod 2$ for all $z$.
\item In all other computed pairs except those with $x=1$ and $y$ even, there is a threshold $z_0(x,y)$ such that
\[
  \gv(\Sp(x,y,z))=z\bmod 2\qquad\text{for all } z_0\le z\le 600.
\]
Since $\m=x+y+z$, the parity rule then holds for all such $z$ when $x\equiv y\pmod 2$, and fails for all such $z$ when $x\not\equiv y\pmod2$. In the latter case the spider has Grundy value $1$ with an even number of markable edges, or Grundy value $0$ with an odd number. For example, $\Sp(2,3,z)$ violates the parity conjecture for every $14\le z\le 600$. The thresholds are irregular; for instance $z_0(1,3)=z_0(2,3)=14$, $z_0(2,7)=34$, $z_0(1,9)=z_0(2,9)=56$, and the largest observed threshold is $z_0(4,13)=70$.
\item If $x=1$ and $y=2$, then $\gv=(z+1)\bmod 2$ for all computed $z$, so the parity rule holds. If $x=1$ and $y\ge4$ is even, the sequence is periodic from early on, with period $8$ for $y=4$, $16$ for $y\in\{6,8\}$, and $32$ for $y\in\{10,12,14\}$. Within each period some values follow the parity rule and some violate it. For example, for $z\ge3$ the values $\gv(\Sp(1,4,z))$ repeat the pattern $0,1,0,0,1,0,1,0$, and the parity rule fails exactly when $z\equiv 0,1,2,6,7\pmod 8$.
\end{enumerate}

These observations suggest the following conjecture.

\begin{conjecture}[Eventual parity for spiders]\label{conj:eventual}
Let $1\le x\le y$ with $(x,y)$ not of the form $(1,2j)$. Then $\gv(\Sp(x,y,z))=z\bmod2$ for all sufficiently large $z$. Consequently, for all sufficiently large $z$ the spider $\Sp(x,y,z)$ satisfies the parity conjecture if $x\equiv y\pmod2$ and violates it if $x\not\equiv y\pmod 2$.
\end{conjecture}

In \cref{sec:s23} we prove \cref{conj:eventual} for $(x,y)=(2,3)$ by hand, apart from a finite computation, and, by a computer-assisted argument, for all $73$ computed pairs and for $(2,y)$ with $y\le 40$. In \cref{sec:onetwoj} we determine $\gv(\Sp(1,2j,z))$ exactly, which explains behaviour~(iii).

\begin{table}[t]
\centering
\footnotesize
\setlength{\tabcolsep}{3.2pt}
\begin{tabular}{c|ccccccccccc}
\toprule
$x\backslash y$ & 2 & 3 & 4 & 5 & 6 & 7 & 8 & 9 & 10 & 11 & 12\\
\midrule
1 & $\mathrm{P}^{*}\,1$ & $\mathrm{P}\,14$ & $p{=}8$ & $\mathrm{P}\,28$ & $p{=}16$ & $\mathrm{P}\,34$ & $p{=}16$ & $\mathrm{P}\,56$ & $p{=}32$ & $\mathrm{P}\,22$ & $p{=}32$\\
2 & $\mathrm{P}\,1$ & $\mathrm{A}\,14$ & $\mathrm{P}\,1$ & $\mathrm{A}\,28$ & $\mathrm{P}\,1$ & $\mathrm{A}\,34$ & $\mathrm{P}\,4$ & $\mathrm{A}\,56$ & $\mathrm{P}\,4$ & $\mathrm{A}\,22$ & $\mathrm{P}\,8$\\
3 &  & $\mathrm{P}\,1$ & $\mathrm{A}\,15$ & $\mathrm{P}\,26$ & $\mathrm{A}\,13$ & $\mathrm{P}\,32$ & $\mathrm{A}\,15$ & $\mathrm{P}\,54$ & $\mathrm{A}\,25$ & $\mathrm{P}\,10$ & $\mathrm{A}\,23$\\
4 &  &  & $\mathrm{P}\,1$ & $\mathrm{A}\,30$ & $\mathrm{P}\,16$ & $\mathrm{A}\,34$ & $\mathrm{P}\,16$ & $\mathrm{A}\,58$ & $\mathrm{P}\,24$ & $\mathrm{A}\,25$ & $\mathrm{P}\,24$\\
5 &  &  &  & $\mathrm{P}\,1$ & $\mathrm{A}\,26$ & $\mathrm{P}\,34$ & $\mathrm{A}\,29$ & $\mathrm{P}\,50$ & $\mathrm{A}\,27$ & $\mathrm{P}\,21$ & $\mathrm{A}\,28$\\
6 &  &  &  &  & $\mathrm{P}\,1$ & $\mathrm{A}\,30$ & $\mathrm{P}\,16$ & $\mathrm{A}\,54$ & $\mathrm{P}\,28$ & $\mathrm{A}\,29$ & $\mathrm{P}\,30$\\
7 &  &  &  &  &  & $\mathrm{P}\,1$ & $\mathrm{A}\,31$ & $\mathrm{P}\,32$ & $\mathrm{A}\,29$ & $\mathrm{P}\,21$ & $\mathrm{A}\,28$\\
8 &  &  &  &  &  &  & $\mathrm{P}\,1$ & $\mathrm{A}\,52$ & $\mathrm{P}\,32$ & $\mathrm{A}\,26$ & $\mathrm{P}\,34$\\
9 &  &  &  &  &  &  &  & $\mathrm{P}\,1$ & $\mathrm{A}\,54$ & $\mathrm{P}\,54$ & $\mathrm{A}\,53$\\
10 &  &  &  &  &  &  &  &  & $\mathrm{P}\,1$ & $\mathrm{A}\,33$ & $\mathrm{P}\,32$\\
11 &  &  &  &  &  &  &  &  &  & $\mathrm{P}\,1$ & $\mathrm{A}\,40$\\
\bottomrule
\end{tabular}
\caption{Eventual behaviour of $z\mapsto\gv(\Sp(x,y,z))$ for $z\le 600$. An entry $\mathrm{P}\,z_0$ means that $\gv=z\bmod 2$ for $z_0\le z\le 600$, and that this agrees with the parity rule ($x\equiv y\pmod 2$). An entry $\mathrm{A}\,z_0$ means that $\gv=z\bmod 2$ for $z_0\le z\le 600$, and that this violates the parity rule for every such $z$ ($x\not\equiv y\pmod 2$). In both cases $z_0$ is the least such value. The entry $\mathrm{P}^{*}\,1$ for $(x,y)=(1,2)$ means $\gv=(z+1)\bmod 2$ for all $z$, which again agrees with the parity rule. An entry $p{=}k$ means the sequence is periodic with period $k$ from small $z$, mixing both rules. The pair $(12,12)$, not shown, gives $\mathrm{P}\,1$.}
\label{tab:families}
\end{table}

\section{Eventual parity for spiders with two fixed legs}\label{sec:s23}

Tables are persuasive, but \cref{tab:families} stops at $z=600$. In this section we cross that finite horizon and prove \cref{conj:eventual} for the pair $(x,y)=(2,3)$. This gives an infinite, twin-free family of counterexamples to the parity conjecture. The proof is by hand, except that $52$ explicit integer sequences are checked by computer on a finite range, and one small constant is computed.

In \cref{sec:cert} we isolate the finitely many facts the argument needs, so that a program can check them for other pairs. This proves \cref{conj:eventual} for all $73$ pairs computed in \cref{sec:families} that are neither twins nor of the form $(1,\text{even})$, and also for $(2,y)$ with $y\le 40$ (\cref{thm:cert}). The thresholds observed in \cref{tab:families} for $z\le 600$ therefore hold for all~$z$. The excluded family $(1,2j)$ is solved completely, by hand, in \cref{sec:onetwoj}.

\begin{theorem}\label{thm:s23}
For every $z\ge 14$,
\[
  \gv(\Sp(2,3,z))=z\bmod 2 .
\]
Consequently, for every $z\ge 14$ the spider $\Sp(2,3,z)$ violates the parity conjecture. It has $\m=z+5$ markable edges, and its Grundy value is $0$ when $\m$ is odd and $1$ when $\m$ is even. The bound is sharp: $\gv(\Sp(2,3,13))=0$.
\end{theorem}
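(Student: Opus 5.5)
We plan to prove the theorem by strong induction on $z$, working entirely inside the spider-state recursion of \cref{prop:spiderrec}. The empty position is $\Gamma(\sn;\{(2,\free),(3,\free),(z,\free)\})$. To show it equals $z\bmod 2$ we must establish two things: every option has value different from $z\bmod 2$, and, when $z$ is odd, some option has value $0$. The difficulty is that the options of the empty state lead into a large family of intermediate states. These states have the two short legs in every reachable configuration, with status of $c$ in $\{\sn,\si,\so\}$, together with a long attached leg $(h,\phi)$ for arbitrary $h$. So the induction must be run on a whole family of functions of $h$, not on $\gv$ alone.

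\textbf{Step 1: set up the families.} Up to the global reversal $\si\leftrightarrow\so$, the configurations of the two short legs form a finite set $\Sigma$. Each element records $s$ and the multiset of attached short-leg pairs $(h,\phi)$ with $h\le3$. For each $\sigma\in\Sigma$ and each $\phi\in\{\free,\IN,\OUT\}$ we get an integer sequence $h\mapsto\Gamma(\sigma\cup\{(h,\phi)\})$. We expect roughly $52$ such sequences after symmetry. States in which the long leg is detached are handled separately. There $c$ is either of status $\sff$ (case (N1)) or the long leg has become a run (case (N2)), so their values are finite explicit expressions in $A,B,C,D$ by \cref{thm:runs}.

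\textbf{Step 2: guess closed forms.} For each sequence we guess an eventual closed form, which should be a function of $h\bmod 2$ (or of small period) for $h\ge h_0$. We read these forms off from computation.

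\textbf{Step 3: verify the forms for all large $h$.} We prove the guessed forms simultaneously by induction on $h$. An option of a state with long leg $(h,\phi)$ cuts the leg at its $j$-th edge. This produces a detached run of value $\rho(h-j;\delta,\phi)$, which is $h-j$, $C$ or $D$, nim-added to a state with long leg $(j-1,\delta)$. Alternatively the move is on a short leg, which keeps the long leg at length $h$ but lowers the short configuration. The structural point is this:
\begin{itemize}
\item when $\phi\ne\free$, all detached runs have values in $\{0,1\}$, so options depend only on parities, and the inductive hypothesis closes the mex computation;
\item when $\phi=\free$, the detached run is a nim heap of size $h-j$, and we must check that $(h-j)\oplus\Gamma(\sigma';(j-1,\delta))$ never hits the forbidden value.
\end{itemize}

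\textbf{Main obstacle.} The hardest step is the $\phi=\free$ case of Step 3. Here the heap $h-j$ is large unless $j$ is close to $h$. For $j-1\ge h_0$ the partner value lies in $\{0,1,\dots\}$ bounded by a small constant $M$, which is the "small constant" to be computed. So the nim-sum is at least $h-j-M$, which is at least $2$ whenever $h-j>M+1$. This excludes the target values $0$ and $1$. For $j-1<h_0$, the partner is one of finitely many explicit values, and the heap $h-j$ is large once $h\ge h_0+M+2$. Only the finite window $h-j\le M+1$ remains, and this is covered by the periodic closed forms. This reduces everything to a finite check of the $52$ sequences on an initial range, below the point where the asymptotic argument takes over.

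\textbf{Step 4: conclude, and check sharpness.} Applying Step 3 to the empty state yields $\gv=z\bmod 2$ for $z\ge14$. Sharpness at $z=13$ is a direct computation.
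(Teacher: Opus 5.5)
There is a genuine gap, and it sits exactly at the step you call the main obstacle. You assume that every family with a blocked long leg is eventually periodic with values bounded by a small constant $M$. This is false for the family that matters most. With both short legs untouched, $\Gamma(\sn;\{(2,\free),(3,\free),(r,\delta)\})=r+2$ for all $r\ge6$ and both $\delta$. The configurations $(\sn;\{(1,\IN),(1,\OUT)\})$ and $(\si;\{(1,\OUT)\})$ also give linear families, namely $r$ and $r\oplus1$.

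A cut of the long leg of the empty position leaves the short legs unchanged. So the partner of the detached heap $t$ is this linear family, and your bound $t\oplus(\text{partner})\ge t-M$ is unavailable. It has to be, because for odd $z$ you need an option of value $0$. The short-leg moves and the move at the centre have values at least $2$. Cuts near the centre pair a large heap with a partner of value at most $7$. Cuts in your window (small heap, long attached part) have values close to $z$. The zero option is the cut in the middle, $t=(z+1)/2$, where the heap of size $t$ cancels an attached part of value $(z-1-t)+2=t$. Your scheme excludes $0$ from every cut outside the window, so it cannot produce $\gv=1$ for odd $z$.

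To repair this, prove the linear form $r+2$ alongside the bounded period-$2$ forms of the other configurations. For the untouched configuration at level $R$:
\begin{itemize}
\item the short moves give $0$ and $1$ and nothing above $7$;
\item the move at the centre gives values in $\{2,3\}$;
\item the long-leg cuts give exactly $[2,R+1]$;
\end{itemize}
so the value is $R+2$.

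Then finish with parity rather than size:
\begin{itemize}
\item every cut whose attached part has $i=z-1-t\ge6$ edges gives $t\oplus(z+1-t)\equiv z+1\pmod 2$;
\item cuts with $i\le5$ have $t\ge8$ and value at least $8$;
\item for the short moves, lower bounds on the free families of the successor configurations suffice ($\ge2$, and $\ge4$ where the detached value is $2$), and explicit cuts supply them. No closed forms for free families are needed.
\end{itemize}

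Your first bullet in Step 3 also needs correcting. Options of a blocked family do not depend only on parities. Linear successors contribute options of size about $r$, and so does the short move that frees the centre, which turns the blocked long leg into the heap $B_r=r$. You must show these options lie above the bounded mex.
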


\subsection{Long-leg families}
The plan is to record the state of the two short legs and to watch how the value changes as the long leg grows. A \emph{configuration} is a pair $\kappa=(s;K)$, where $s\in\{\sn,\si,\so\}$ and $K$ is a multiset of pairs $(h,\psi)$ with $h\ge1$ and $\psi\in\{\free,\IN,\OUT\}$. We require $|K|=2$ if $s=\sn$ and $|K|=1$ if $s\in\{\si,\so\}$. The \emph{size} of $\kappa$ is $\sum_{(h,\psi)\in K}h$. For $r\ge1$ and $\phi\in\{\free,\IN,\OUT\}$ put
\[
  V_\kappa^\phi(r)=\Gamma\bigl(s;\,K+\{(r,\phi)\}\bigr).
\]
In a three-legged spider, $K$ records the two \emph{short} legs and $(r,\phi)$ the \emph{long} leg. By \cref{prop:spiderrec},
\begin{equation}\label{eq:gk0}
  \gv(\Sp(x,y,z))=V_{\kappa_0}^{\free}(z),\qquad \kappa_0=(\sn;\{(x,\free),(y,\free)\}).
\end{equation}
For $\phi\in\{\IN,\OUT\}$ we call $r\mapsto V^\phi_\kappa(r)$ a \emph{blocked family}, and for $\phi=\free$ a \emph{free family}. We write $\bar\phi$ for the reversed direction, $[\delta\ne\phi]\in\{0,1\}$ for the indicator, $\pi(r)=r\bmod 2$ and $\bar\pi(r)=1-\pi(r)$. By \cref{thm:runs}, $C_k=\pi(k)$ for all $k\ge0$ and $D_k=\bar\pi(k)$ for $k\ge1$. Hence, for $\delta,\phi\in\{\IN,\OUT\}$ and $k\ge0$ with $(k,\delta)\ne(0,\bar\phi)$,
\begin{equation}\label{eq:beta}
  \rho(k;\delta,\phi)=\pi(k)\oplus[\delta\ne\phi].
\end{equation}
In particular this depends only on $k\bmod 2$, and $\rho(k;\IN,\phi)\ne\rho(k;\OUT,\phi)$ for $k\ge1$.

A \emph{short move} of $\kappa$ is a move of \cref{prop:spiderrec} on a leg $(h,\psi)\in K$, that is, a pair $(j,\delta)$ with $1\le j\le h$ and $\delta\in\{\IN,\OUT\}$, where $\delta=\psi$ is required if $j=h$ and $\psi\ne\free$. Its \emph{detached value} is $d=\rho(h-j;\delta,\psi)$. If $j\ge2$, or if $j=1$, $s=\sn$, the move has a \emph{successor} $\kappa'$: namely $\kappa'=(s;\,K-\{(h,\psi)\}+\{(j-1,\delta)\})$ if $j\ge2$, and $\kappa'=(s\cdot\delta;\,K-\{(h,\psi)\})$ if $j=1$. A successor has smaller size. The remaining short moves have $j=1$ and $s\in\{\si,\so\}$, so they empty $K$; we distinguish $s\cdot\delta=\sff$ and $s\cdot\delta=s$.

\begin{lemma}[Recursion for blocked families]\label{lem:famrec}
Let $\kappa=(s;K)$ be a configuration, $\phi\in\{\IN,\OUT\}$ and $r\ge1$. Then $V^\phi_\kappa(r)$ is the mex of the following numbers.
\begin{enumerate}[label=\textup{(\alph*)},leftmargin=*]
\item For every short move with detached value $d$:
 \begin{enumerate}[label=\textup{(a\arabic*)},leftmargin=*]
 \item $d\oplus V^\phi_{\kappa'}(r)$ if the move has a successor $\kappa'$;
 \item $d\oplus r$ if $j=1$ and $s\cdot\delta=\sff$;
 \item $d\oplus \rho(r;\delta_s,\phi)$ if $j=1$ and $s\cdot\delta=s\in\{\si,\so\}$, where $\delta_s=\OUT$ for $s=\si$ and $\delta_s=\IN$ for $s=\so$.
 \end{enumerate}
\item For every $\delta\in\{\IN,\OUT\}$, where $\delta=\phi$ is required if $r=1$: $\rho(r-1;\delta,\phi)\oplus W_\delta(\kappa)$, with the constant $W_\delta(\kappa)=\Gamma(s\cdot\delta;K)$.
\item For every $1\le i\le r-1$ and $\delta\in\{\IN,\OUT\}$ with $(i,\delta)\ne(r-1,\bar\phi)$: $V^\delta_\kappa(i)\oplus\rho(r-1-i;\delta,\phi)$.
\end{enumerate}
\end{lemma}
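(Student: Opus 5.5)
The lemma should follow from \cref{prop:spiderrec} applied to the state $(s;K+\{(r,\phi)\})$, with its options sorted according to which leg is played on. First I check that we are in a non-degenerate case of that proposition. If $s=\sn$, then $|K|=2$. If $s\in\{\si,\so\}$, then $|K|=1$, so the state has two attached legs. Hence $\Gamma$ is the mex over all legal moves on attached legs. These moves split into moves on a leg of $K$ (short moves) and moves on the long leg $(r,\phi)$. I will show that the short moves give exactly the values in (a) and the long-leg moves give those in (b) and (c).

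\emph{Short moves.} Take a move $(j,\delta)$ on $(h,\psi)\in K$. By \cref{prop:spiderrec} its value is $d\oplus\Gamma(\cdot)$, where $d=\rho(h-j;\delta,\psi)$.
\begin{itemize}
\item If $j\ge2$, the new state is $(s;K'+\{(r,\phi)\})$ with $K'=K-\{(h,\psi)\}+\{(j-1,\delta)\}$. This is $V^\phi_{\kappa'}(r)$, which gives (a1).
\item If $j=1$ and $s=\sn$, the new state is $(s\cdot\delta;K-\{(h,\psi)\}+\{(r,\phi)\})$. Here $|K-\{(h,\psi)\}|=1$ and $s\cdot\delta\in\{\si,\so\}$, so $\kappa'$ is again a configuration and we get (a1).
\item If $j=1$ and $s\in\{\si,\so\}$, only the long leg remains attached, i.e.\ $L=\{(r,\phi)\}$.
  \begin{itemize}
  \item If $s\cdot\delta=\sff$, then (N1) gives $\rho_\free(r;\phi)=B_r=r$, since $\phi\ne\free$. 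This is (a2).
  \item If $s\cdot\delta=s$, then (N2) gives $\rho(r;\delta_s,\phi)$, with the forced direction $\OUT$ for $\si$ and $\IN$ for $\so$. This is (a3).
  \end{itemize}
\end{itemize}

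\emph{Long-leg moves.} Take a move $(j,\delta)$ on the long leg, with detached value $\rho(r-j;\delta,\phi)$.
\begin{itemize}
\item For $j=1$, the new state is $(s\cdot\delta;K)$, so the value is $\rho(r-1;\delta,\phi)\oplus W_\delta(\kappa)$. The legality rule ($\delta=\phi$ when $j=h=r$, since $\phi\ne\free$) becomes the condition ``$\delta=\phi$ if $r=1$''. This is (b). Note that $W_\delta(\kappa)=\Gamma(s\cdot\delta;K)$ does not depend on $r$.
\item For $j\ge2$, put $i=j-1\in[1,r-1]$. The new state is $(s;K+\{(i,\delta)\})$, with value $V^\delta_\kappa(i)\oplus\rho(r-1-i;\delta,\phi)$. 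The legality exclusion at $j=r$ is $\delta\ne\bar\phi$ when $i=r-1$, which is exactly $(i,\delta)\ne(r-1,\bar\phi)$. This is (c).
\end{itemize}

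The only real obstacle is bookkeeping. I have to make sure the degenerate cases (N1)–(N3) are applied correctly when $j=1$ empties $K$, and that the successor configurations satisfy the cardinality requirement $|K|=2$ or $1$. I also have to check that the statuses $s\cdot\delta$ that arise in (b) are handled by $\Gamma$ including the degenerate cases; this holds automatically, since $\Gamma$ is defined on all states. No further argument is needed beyond the case split above.
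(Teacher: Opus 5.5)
Your proposal is correct and is essentially the paper's own proof. Like the paper, you apply \cref{prop:spiderrec} to the non-degenerate state $(s;K+\{(r,\phi)\})$ and sort its options by leg: short moves with a successor give (a1); short moves with $j=1$ and $s\in\{\si,\so\}$ give (a2) or (a3) via (N1) and (N2); and long-leg moves at $j=1$ and at $j=i+1\ge2$ give (b) and (c), with the same legality bookkeeping.
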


\begin{proof}
The spider state $(s;K+\{(r,\phi)\})$ is not one of the degenerate cases (N1)--(N3), since either $s=\sn$, or $s\in\{\si,\so\}$ and there are two attached legs. So \cref{prop:spiderrec} applies. Consider a move on a short leg. If $j\ge2$ the attached part is $(s;K'+\{(r,\phi)\})$, giving (a1). If $j=1$ the new status is $s\cdot\delta$. If $s=\sn$, one short leg and the long leg remain, which is again (a1). If $s\in\{\si,\so\}$, only the long leg remains. When $s\cdot\delta=\sff$, (N1) gives $\rho_\free(r;\phi)=B_r=r$, which is (a2). When $s\cdot\delta=s$, (N2) gives $\rho(r;\delta_s,\phi)$, which is (a3). A move on the long leg at $j=1$ detaches $\rho(r-1;\delta,\phi)$ and leaves $\Gamma(s\cdot\delta;K)$; it requires $\delta=\phi$ exactly when $r=1$. This is (b). A move on the long leg at $j=i+1\ge2$ detaches $\rho(r-1-i;\delta,\phi)$ and leaves $V^\delta_\kappa(i)$; it requires $\delta=\phi$ exactly when $i=r-1$. This is (c).
\end{proof}

\begin{lemma}[Reversal]\label{lem:reversal}
Let $\bar s$ exchange $\si$ and $\so$ (and fix $\sn,\sff$), and let $\bar L$ be obtained from $L$ by reversing every far end ($\IN\leftrightarrow\OUT$, $\free$ fixed). Then $\Gamma(s;L)=\Gamma(\bar s;\bar L)$. In particular $V^\phi_{(s;K)}(r)=V^{\bar\phi}_{(\bar s;\bar K)}(r)$.
\end{lemma}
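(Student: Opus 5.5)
The plan is to realise the reversal as a global symmetry of the underlying game and then read off both statements. Reversing every arc of a position on a tree swaps sinks and sources. A position is therefore legal if and only if its reversal is, and a move $u\to v$ is legal from $P$ if and only if $v\to u$ is legal from the reversed position $\bar P$. So arc reversal gives an isomorphism of game trees, and $\gv(P)=\gv(\bar P)$.

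At the level of status-labelled forests, I would check directly against \cref{lem:legal}. Exchanging $\si\leftrightarrow\so$ at every vertex, and mapping each move $u\to v$ to $v\to u$, preserves the legality condition. The two clauses are simply interchanged, because $s(u)\in\{\si,\sff\}$ becomes $\bar s(u)\in\{\so,\sff\}$. The map also commutes with the multiplication table, since $\overline{s\cdot\so}=\bar s\cdot\si$ and $\overline{s\cdot\si}=\bar s\cdot\so$, which one reads off the table. Hence the arrow game on $(F,s)$ is isomorphic to the one on $(F,\bar s)$. Dead edges and splitting at $\sff$ are preserved, so the normal forms correspond component by component.

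Next I would translate this to spider states. Under the reversal the centre status $s$ becomes $\bar s$. The attached part of each leg keeps its length $h_t$, and the direction $\phi_t$ of the first arc is reversed, with $\free$ fixed. The component containing $c$ is carried to the component containing $c$ in the reversed position, so $\Gamma(s;L)=\Gamma(\bar s;\bar L)$.

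To avoid referring to positions at all, one can instead argue by induction on the total size $\sum h$ using \cref{prop:spiderrec}. One checks that the degenerate cases (N1)--(N3) are reversal invariant: $\rho(h;\bar\delta,\bar\phi)=\rho(h;\delta,\phi)$ by \cref{thm:runs}, and $\rho_\free$ is symmetric. Then the option $(j,\delta)$ from $(s;L)$ matches the option $(j,\bar\delta)$ from $(\bar s;\bar L)$, with the same value, using $\overline{s\cdot\delta}=\bar s\cdot\bar\delta$. The legality restriction $\delta=\phi$ at $j=h$ becomes $\bar\delta=\bar\phi$.

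The statement for families is the special case $L=K+\{(r,\phi)\}$. The only point needing care, and the one I expect to be the main obstacle, is verifying the compatibility of the status product with reversal in all eight cases. This is routine but must be checked.
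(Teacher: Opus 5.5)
Your proposal is correct, and your fallback argument (induction on $\sum h$ via \cref{prop:spiderrec}) is exactly the paper's proof. There, (N1)--(N3) are invariant because $\rho(h;\bar\delta,\bar\phi)=\rho(h;\delta,\phi)$, and the option $(j,\delta)$ of $(s;L)$ is matched with the option $(j,\bar\delta)$ of $(\bar s;\bar L)$ using $\overline{s\cdot\delta}=\bar s\cdot\bar\delta$.

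Your main argument takes a genuinely different route. You prove once, directly from \cref{lem:legal}, that exchanging $\si\leftrightarrow\so$ at every vertex of a status-labelled forest and reversing every move is an isomorphism of arrow games: the two legality clauses swap, and the update table commutes with the bar. You then note that the status-labelled tree encoding $(s;L)$ is carried to the one encoding $(\bar s;\bar L)$. That tree has a centre of status $s$ and legs of $h$ edges, and the far-end vertex of each leg has status $\sff$, $\si$ or $\so$ according as the far end is $\free$, $\IN$ or $\OUT$.

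This route is more conceptual and more general, and it needs no induction. It is the same symmetry the paper uses informally for runs, for the canonical forms of (P2), and in \cref{prop:blockedleg}. The step that actually carries it is the forest-level isomorphism, so you can drop the detour through positions on a spider. Keeping that detour would require realising each abstract state on some spider. The paper's version, by contrast, never leaves the spider-state recursion and reduces everything to a short check on the options of \cref{prop:spiderrec}.

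The ``eight-case'' verification you flag is smaller than it looks. The four identities $\overline{s\cdot\so}=\bar s\cdot\si$ already imply $\overline{s\cdot\si}=\bar s\cdot\so$: substitute $\bar s$ for $s$ and apply the bar.
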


\begin{proof}
Induction on the number of unmarked edges. The cases (N1)--(N3) are invariant, because $\rho(k;\bar\delta,\bar\phi)=\rho(k;\delta,\phi)$ and $\rho_\free(k;\bar\phi)=\rho_\free(k;\phi)$. In \cref{prop:spiderrec}, the move $(j,\delta)$ on the leg $(h,\phi)$ of $(s;L)$ and the move $(j,\bar\delta)$ on the leg $(h,\bar\phi)$ of $(\bar s;\bar L)$ have the same legality and the same value, since $\bar s\cdot\bar\delta=\overline{s\cdot\delta}$.
\end{proof}

\begin{lemma}[Three universal families]\label{lem:universal}
Let $\kappa_1=(\sn;\{(1,\IN),(1,\OUT)\})$, $\kappa_2=(\si;\{(1,\OUT)\})$ and $\bar\kappa_2=(\so;\{(1,\IN)\})$. For all $r\ge1$ and $\phi\in\{\IN,\OUT\}$,
\[
  V^\phi_{\kappa_2}(r)=V^\phi_{\bar\kappa_2}(r)=r\oplus1,\qquad V^\phi_{\kappa_1}(r)=r.
\]
\end{lemma}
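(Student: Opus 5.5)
We prove all three identities by strong induction on $r$, using the recursion of \cref{lem:famrec}. We treat $\kappa_2$ first, then $\kappa_1$, whose successors are $\kappa_2$ and $\bar\kappa_2$. By \cref{lem:reversal}, $V^\phi_{\bar\kappa_2}(r)=V^{\bar\phi}_{\kappa_2}(r)$. So once the formula for $\kappa_2$ is shown to be independent of $\phi$, the case $\bar\kappa_2$ follows. Throughout we use \eqref{eq:beta}, $\rho(k;\delta,\phi)=\pi(k)\oplus[\delta\ne\phi]$.

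\emph{The family $\kappa_2=(\si;\{(1,\OUT)\})$.} First we list the option sets of \cref{lem:famrec}.
\begin{itemize}[leftmargin=*]
\item The only short move is $(j,\delta)=(1,\OUT)$, which is forced since $j=h$. It gives $\si\cdot\OUT=\sff$ and $d=C_0=0$, so (a2) contributes the value $r$.
\item For (b) we need the constants $W_\delta(\kappa_2)$. By (N2), $W_\IN=\Gamma(\si;\{(1,\OUT)\})=\rho(1;\OUT,\OUT)=1$. By (N1), $W_\OUT=\Gamma(\sff;\{(1,\OUT)\})=B_1=1$. So (b) contributes $\pi(r-1)\oplus[\delta\ne\phi]\oplus1$. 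For $r\ge2$ both $\delta$ are allowed, giving $\{0,1\}$.
\item For $1\le i\le r-2$, (c) contributes $(i\oplus1)\oplus\pi(r-1-i)\oplus[\delta\ne\phi]$. By the induction hypothesis and both choices of $\delta$, this gives $\{i,i\oplus1\}$.
\item For $i=r-1$ only $\delta=\phi$ is allowed, giving $(r-1)\oplus1$.
\end{itemize}
Now we compute the mex for $r\ge2$.
\begin{itemize}[leftmargin=*]
\item If $r$ is even, then $(r-2)\oplus1=r-1$, so the option set is $\{0,\dots,r-1\}\cup\{r\}$. Its mex is $r+1=r\oplus1$.
\item If $r$ is odd, the options are $\{0,\dots,r-2\}$, together with $(r-1)\oplus1=r$ and $r$ itself. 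They omit $r-1=r\oplus1$.
\item The base case $r=1$ is checked directly. The options are $1$ from (a2) and $1$ from (b) with $\delta=\phi$, so the value is $0=1\oplus1$.
\end{itemize}
None of these values depends on $\phi$, which is what the reversal step needs.

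\emph{The family $\kappa_1=(\sn;\{(1,\IN),(1,\OUT)\})$.} Again we list the options.
\begin{itemize}[leftmargin=*]
\item There are two short moves, both with $j=1$ and $d=0$. Their successors are $(\si;\{(1,\OUT)\})=\kappa_2$ and $(\so;\{(1,\IN)\})=\bar\kappa_2$, so (a1) contributes $r\oplus1$.
\item The constant $W_\delta(\kappa_1)$ is computed once from \cref{prop:spiderrec}, and by \cref{lem:reversal} it is the same for both $\delta$. Take $\delta=\IN$, so we need $\Gamma(\si;\{(1,\IN),(1,\OUT)\})$.
\begin{itemize}[leftmargin=*]
\item On the leg $(1,\IN)$ the move is $\IN$. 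It leaves $(\si;\{(1,\OUT)\})$, of value $C_1=1$ by (N2).
\item On the leg $(1,\OUT)$ the move is $\OUT$. It gives status $\sff$ and value $B_1=1$ by (N1).
\end{itemize}
Hence $W=0$, and (b) contributes $\{0,1\}$ for $r\ge2$.
\item For $i\le r-2$, (c) contributes $\{i,i\oplus1\}$ by induction.
\item For $i=r-1$, (c) contributes $r-1$.
\end{itemize}
Now we compute the mex.
\begin{itemize}[leftmargin=*]
\item If $r$ is even, the option set is $\{0,\dots,r-1\}\cup\{r+1\}$, with mex $r$.
\item If $r$ is odd, it is $\{0,\dots,r-1\}$, since $r\oplus1=r-1$. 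The mex is again $r$.
\item For $r=1$ the options are $0$ from (a1) and $0$ from (b) with $\delta=\phi$, so the value is $1$.
\end{itemize}

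The only delicate point is bookkeeping rather than an idea. In (c) one must respect the excluded pair $(i,\delta)=(r-1,\bar\phi)$ and the restriction on (b) when $r=1$. One must also check that the degenerate constants $W_\delta$ are evaluated with the correct status products from \cref{lem:legal}. I expect the parity case split in the mex computation, with the small cases $r=1,2$ treated separately, to be where errors are most likely.
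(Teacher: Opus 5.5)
Your proposal is correct and follows the paper's proof essentially line by line. Like the paper, you argue by induction on $r$ via \cref{lem:famrec}, take the constants $W_\delta$ from (N1)/(N2), pair the (c)-options as $\{i,i\oplus1\}$ and split the mex by the parity of $r$, obtaining $\bar\kappa_2$ from $\kappa_2$ by \cref{lem:reversal} before using it as a successor of $\kappa_1$; all your option sets and base cases check out.
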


\begin{proof}
By \cref{lem:reversal} it suffices to treat $\kappa_2$ and $\kappa_1$; we use induction on $r$ and \cref{lem:famrec}.

\emph{$\kappa_2$.} The only short move is $(1,\OUT)$ with $d=0$ and $\si\cdot\OUT=\sff$, giving the option $r$ by (a2). In (b), $W_\IN(\kappa_2)=\Gamma(\si;\{(1,\OUT)\})=C_1=1$ and $W_\OUT(\kappa_2)=\Gamma(\sff;\{(1,\OUT)\})=B_1=1$. For $r=1$ the options are therefore $\{1\}$, and $V=0=1\oplus1$. Let $r\ge2$. By \eqref{eq:beta}, (b) contributes $\{0,1\}\oplus1=\{0,1\}$. In (c), for $i\le r-2$ both directions are allowed and, by induction, give $\{i\oplus1,i\}$, the pair $\{2m,2m+1\}$ containing $i$. For $i=r-1$ only $\delta=\phi$ is allowed, giving $(r-1)\oplus1$. If $r$ is even, the options are $[0,r-1]\cup\{r\}\cup\{r-2\}=[0,r]$, so $V=r+1=r\oplus1$. If $r$ is odd, they are $[0,r-2]\cup\{r\}$, so $V=r-1=r\oplus 1$.

\emph{$\kappa_1$.} The short moves are $(1,\IN)$ on $(1,\IN)$, with successor $\kappa_2$, and $(1,\OUT)$ on $(1,\OUT)$, with successor $\bar\kappa_2$; both have $d=0$. They give the option $r\oplus1$. In (b), $W_\IN(\kappa_1)=\Gamma(\si;\{(1,\IN),(1,\OUT)\})=\mex\{1\}=0$; its two options are $C_1=1$ and $B_1=1$. Likewise $W_\OUT(\kappa_1)=0$. For $r=1$ the options are $\{0\}$, so $V=1$. Let $r\ge2$. Then (b) gives $\{0,1\}$. In (c), $i\le r-2$ gives the pair containing $i$, and $i=r-1$ gives $r-1$. If $r$ is even, the options are $[0,r-1]\cup\{r+1\}$; if $r$ is odd, they are $[0,r-1]$. In both cases $V=r$.
\end{proof}

\subsection{The blocked families of \texorpdfstring{$\Sp(2,3,\cdot)$}{S(2,3,.)}}
Let $\mathcal K$ be the set of configurations reachable from $\kappa_0=(\sn;\{(2,\free),(3,\free)\})$ by successors. It consists of the $26$ configurations of \cref{tab:blocked23}: $14$ with $s=\sn$, $6$ with $s=\si$, and their $6$ reversals with $s=\so$. It contains $\kappa_1,\kappa_2,\bar\kappa_2$.

\begin{table}[t]
\centering
\small
\setlength{\tabcolsep}{4pt}
\begin{tabular}{llll@{\hspace{1.2em}}llll}
\toprule
$s$ & $K$ & $\phi=\IN$ & $\phi=\OUT$ & $s$ & $K$ & $\phi=\IN$ & $\phi=\OUT$\\
\midrule
$\sn$ & $2^\free,3^\free$ & $\langle2,6,7,5,4\rangle\, r+2$ & $\langle2,6,7,5,4\rangle\, r+2$ & $\sn$ & $1^\IN,1^\OUT$ & $r$ & $r$\\
$\sn$ & $2^\free,1^\IN$ & $\langle 2\rangle\,\pi$ & $\bar\pi$ & $\sn$ & $1^\IN,2^\IN$ & $\pi$ & $\langle2\rangle\,\bar\pi$\\
$\sn$ & $2^\free,1^\OUT$ & $\bar\pi$ & $\langle 2\rangle\,\pi$ & $\sn$ & $1^\IN,2^\OUT$ & $\bar\pi$ & $\langle2\rangle\,\pi$\\
$\sn$ & $2^\free,2^\IN$ & $\bar\pi$ & $\pi$ & $\sn$ & $1^\OUT,2^\IN$ & $\langle2\rangle\,\pi$ & $\bar\pi$\\
$\sn$ & $2^\free,2^\OUT$ & $\pi$ & $\bar\pi$ & $\sn$ & $1^\OUT,2^\OUT$ & $\langle2\rangle\,\bar\pi$ & $\pi$\\
$\sn$ & $1^\IN,3^\free$ & $\langle5,4,5\rangle\,2+\bar\pi$ & $\langle1\rangle\,2+\pi$ & $\si$ & $2^\free$ & $\langle3\rangle\,\bar\pi$ & $\pi$\\
$\sn$ & $1^\OUT,3^\free$ & $\langle1\rangle\,2+\pi$ & $\langle5,4,5\rangle\,2+\bar\pi$ & $\si$ & $3^\free$ & $4+\bar\pi$ & $\langle0,4,5,7,6,7,6\rangle\,4+\pi$\\
$\sn$ & $1^\IN,1^\IN$ & $\bar\pi$ & $\pi$ & $\si$ & $1^\IN$ & $\pi$ & $\bar\pi$\\
$\sn$ & $1^\OUT,1^\OUT$ & $\pi$ & $\bar\pi$ & $\si$ & $1^\OUT$ & $r\oplus1$ & $r\oplus1$\\
 & & & & $\si$ & $2^\IN$ & $\bar\pi$ & $\langle3\rangle\,\pi$\\
 & & & & $\si$ & $2^\OUT$ & $\pi$ & $\langle3\rangle\,\bar\pi$\\
\bottomrule
\end{tabular}
\caption{The blocked families $V^\phi_{(s;K)}(r)$ of $\Sp(2,3,\cdot)$. Here $h^\psi$ stands for the leg $(h,\psi)$, and $\pi=\pi(r)$, $\bar\pi=\bar\pi(r)$. An entry $\langle a_1,\dots,a_k\rangle\,f(r)$ means $V(r)=a_r$ for $r\le k$ and $V(r)=f(r)$ for $r>k$. The configurations with $s=\so$ follow from \cref{lem:reversal}: $V^\phi_{(\so;K)}=V^{\bar\phi}_{(\si;\bar K)}$.}
\label{tab:blocked23}
\end{table}

We say that $\kappa_0,\kappa_1,\kappa_2,\bar\kappa_2$ are of \emph{type L} and all other configurations in $\mathcal K$ are of \emph{type P}. We read off from \cref{tab:blocked23}:
\begin{enumerate}[label=\textup{(T\arabic*)},leftmargin=*]
\item every family of type P takes values in $[0,7]$, and satisfies $V(r)=V(r-2)$ for $r\ge10$;
\item every short move of a configuration in $\mathcal K$ has detached value at most $2$;
\item $\kappa_0$ has the largest size, $5$, so it is not a successor of any configuration; every type-L successor of a configuration is $\kappa_1$, $\kappa_2$ or $\bar\kappa_2$.
\end{enumerate}

\begin{theorem}\label{thm:table23}
\Cref{tab:blocked23} is correct: for all $\kappa\in\mathcal K$, $\phi\in\{\IN,\OUT\}$ and $r\ge1$, $V^\phi_\kappa(r)$ is as stated.
\end{theorem}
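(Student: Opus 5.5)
We prove \cref{thm:table23} by strong induction on the pair (size of $\kappa$, $r$), ordered lexicographically, using \cref{lem:famrec} as the only recursion. By \cref{lem:reversal} it suffices to treat the $20$ configurations with $s\in\{\sn,\si\}$, since the $\so$ rows follow formally. The families $\kappa_1$ and $\kappa_2$ are already settled by \cref{lem:universal}. The options in (a1) refer to configurations of smaller size at the same $r$, which are known. The options in (c) refer to the same $\kappa$ at smaller arguments $i<r$, and both directions $\delta$ may occur, so we treat the pair of families $V^\IN_\kappa,V^\OUT_\kappa$ together. The options in (a2), (a3) and (b) are explicit: $W_\delta(\kappa)$ is a constant spider-state value computable from \cref{prop:spiderrec}, and $\rho$ is given by \eqref{eq:beta}.

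\textbf{Finite part.} For $r$ up to some bound $r_0$ (say $r_0=12$, beyond every prefix in the table and beyond the stabilisation index $10$ in (T1)), the values are verified directly from \cref{lem:famrec}. This is the finite check of the $52$ sequences, namely $26$ configurations times two directions. It is a mechanical mex computation, and we do it by computer, as announced.

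\textbf{Inductive step for type P.} For $r>r_0$ we must show that the option set of \cref{lem:famrec} has the claimed mex, where the claim is a value in $\{0,1\}$ or in $\{2+\pi,2+\bar\pi\}$, $\{4+\pi,4+\bar\pi\}$. The options split into two kinds.
\begin{itemize}
\item \emph{Bounded options with period $2$.} By (T2), successors of $\kappa$ have detached values at most $2$. By (T3), type-P successors are bounded with period $2$, and the type-L successors contribute $d\oplus r$ or $d\oplus(r\oplus1)$, which are large. The options (a2) $d\oplus r$ are large, while (a3) and (b) depend only on $r\bmod2$.
\item \emph{Options from (c).} These are $V^\delta_\kappa(i)\oplus\rho(r-1-i;\delta,\phi)$ with $i<r$. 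By induction each is in $[0,7]$, and it depends only on the parities of $i$ and $r$ once $i\ge 10$.
\end{itemize}
Hence every option below $8$ depends only on $r\bmod 2$, provided the finite window $i<10$ contributes the same values for all large $r$. This holds because $\rho(r-1-i;\cdot)$ depends only on the parity of $r-1-i$. So the set $\{\text{options}\}\cap[0,7]$ is $2$-periodic for $r>r_0$. The large options (at least $r-2>7$) cannot affect a mex that is at most $7$. Thus $V(r)=V(r-2)$ propagates from the verified range, and this establishes the entire type-P part at once.

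\textbf{Inductive step for type L, and the main obstacle.} The remaining case is $\kappa_0$, with claimed value $r+2$. Here we must show that every value in $[0,r+1]$ occurs as an option and that $r+2$ does not.
\begin{itemize}
\item \emph{Low values.} The (c) options with $i$ large give $V^\delta_{\kappa_0}(i)\oplus\rho=(i+2)\oplus\epsilon$ for $\epsilon\in\{0,1\}$. As $i$ runs over $[10,r-1]$ and both $\delta$ are used, these cover the pairs $\{2m,2m+1\}$ up to about $r+1$. The remaining small values come from the finite window and from the (a1) options into type-P successors.
\item \emph{Top value.} The edge case $i=r-1$ admits only $\delta=\phi$, and it contributes $(r+1)\oplus\rho(0;\phi,\phi)=r+1$. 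Together with (a1) options of the form $d\oplus(r\oplus1)$ or $d\oplus r$ coming from the $\kappa_1,\kappa_2$ successors with $d\le2$, this should fill the top of the range.
\item \emph{Excluding $r+2$.} We must check that no option equals $r+2$. The candidates are those (a1) options $d\oplus r$ with $d=2$ and $r$ even, or $d\oplus(r\oplus1)$ with $d\in\{2,3\}$. For these we inspect the actual detached values of the short moves of $\kappa_0$ leading to $\kappa_1$ and to $\kappa_2,\bar\kappa_2$, and confirm that the parity of $r$ rules this out.
\end{itemize}
This parity bookkeeping for $\kappa_0$ is the step we expect to be delicate. The fallback is to split by $r\bmod4$, enumerate the (a1) options explicitly, and compare them with the pair structure of the (c) options.
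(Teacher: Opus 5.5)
\textbf{Overall.} Your architecture matches the paper's: a computer check for small $r$, \cref{lem:universal} for $\kappa_1,\kappa_2,\bar\kappa_2$, period-$2$ propagation for type P, and a separate argument for $\kappa_0$. Your type-P step is essentially the paper's comparison of $O(R)$ with $O(R-2)$, with two small corrections.
\begin{itemize}
\item You should say explicitly that the pairs $(R-3,\bar\phi)$, $(R-2,\delta)$, $(R-1,\phi)$, which are new at level $R$, have by (T1) the same values as the allowed pairs $(R-5,\bar\phi)$, $(R-4,\delta)$, $(R-3,\phi)$ at level $R-2$.
\item Options through a type-L successor are only bounded below by $R-3$, not $R-2$.
\end{itemize}
Neither point matters for $R\ge13$.

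\textbf{The gap is the case $\kappa_0$.} You leave this case unfinished, and the sketch rests on a false premise. You assume $\kappa_0$ has the type-L successors $\kappa_1,\kappa_2,\bar\kappa_2$, which would produce options $d\oplus r$ and $d\oplus(r\oplus1)$, and you hope to exclude the value $r+2$ by parity. That could not work: for instance $2\oplus r=r+2$ whenever $r\equiv0\pmod 4$. So if such an option existed, the table entry $r+2$ would simply be false.

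\textbf{The missing observation.} No such option exists. A successor shortens or removes one leg of $\kappa_0=(\sn;\{2^\free,3^\free\})$ and leaves the other free leg intact. The successors are therefore $(\si;\{3^\free\})$, $(\so;\{3^\free\})$, $(\sn;\{1^\delta,3^\free\})$, $(\si;\{2^\free\})$, $(\so;\{2^\free\})$, $(\sn;\{2^\free,1^\delta\})$ and $(\sn;\{2^\free,2^\delta\})$, all of type P. By (T1) and (T2), every option of kind (a) then lies in $[0,7]$. Kinds (a2) and (a3) do not occur, since $s=\sn$.

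\textbf{Closing the case.} With this, no case split on $r\bmod 4$ is needed.
\begin{itemize}
\item The moves $(3,\delta)$ on $3^\free$ have $d=0$ and give $\{\pi(R),\bar\pi(R)\}=\{0,1\}$.
\item Kind (b) contributes only values in $\{2,3\}$, because $W_\IN(\kappa_0)=W_\OUT(\kappa_0)=\Gamma(\si;\{2^\free,3^\free\})=3$. This is a constant you must actually compute.
\item Kind (c) uses $V^\IN_{\kappa_0}=V^\OUT_{\kappa_0}=v$, with $v(1),\dots,v(5)=2,6,7,5,4$ and $v(i)=i+2$ for $i\ge6$. It gives exactly $[2,R+1]$, the top value coming only from $i=R-1$.
\end{itemize}
Hence $[0,R+1]\subseteq O(R)$ and $\max O(R)=R+1$, so $V^\phi_{\kappa_0}(R)=R+2$.
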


\begin{proof}
For $r\le 12$ (and in fact for $r\le100$) the table was checked by computer. The program evaluates $\Gamma$ by \cref{prop:spiderrec} and \cref{thm:runs} alone and compares it with the table for all $26\cdot2$ blocked families. The configurations with $s=\so$ were computed directly, not via \cref{lem:reversal}.

Let $R\ge13$. Assume that the table is correct for all $r<R$, and for $r=R$ and all configurations of smaller size than $\kappa$. Fix $\phi$ and write $O(R)$ for the set of options of \cref{lem:famrec} for $(\kappa,\phi,R)$.

\emph{Case $\kappa\in\{\kappa_1,\kappa_2,\bar\kappa_2\}$.} This is \cref{lem:universal}.

\emph{Case $\kappa=\kappa_0$.} The short moves of $\kappa_0$ are: $(j,\delta)$ on $2^\free$, with $d=B_{2-j}$ and successors $(\si;\{3^\free\})$, $(\so;\{3^\free\})$, $(\sn;\{1^\delta,3^\free\})$; and $(j,\delta)$ on $3^\free$, with $d=B_{3-j}$ and successors $(\si;\{2^\free\})$, $(\so;\{2^\free\})$, $(\sn;\{2^\free,1^\delta\})$, $(\sn;\{2^\free,2^\delta\})$. All of them are of type P. So by (T1) and (T2) the options (a) lie in $[0,7]$. The moves $(3,\IN)$ and $(3,\OUT)$ on $3^\free$ have $d=0$, and by the table $\{V^\phi_{(\sn;\{2^\free,2^\IN\})}(R),V^\phi_{(\sn;\{2^\free,2^\OUT\})}(R)\}=\{\pi(R),\bar\pi(R)\}=\{0,1\}$. Hence $0,1\in O(R)$. By computer (or by hand), $W_\IN(\kappa_0)=W_\OUT(\kappa_0)=\Gamma(\si;\{2^\free,3^\free\})=3$, so (b) contributes values in $\{2,3\}$. In (c) we have $V^\IN_{\kappa_0}(i)=V^\OUT_{\kappa_0}(i)=:v(i)$. For $i\le R-2$ both directions are allowed, and by \eqref{eq:beta} they give the pair $\{v(i),v(i)\oplus1\}$. The values $v(1),\dots,v(5)=2,6,7,5,4$ give the pairs covering $[2,7]$. For $6\le i\le R-2$ we have $v(i)=i+2$, and these pairs cover $[8,R]$ if $R$ is odd and $[8,R+1]$ if $R$ is even. Finally $i=R-1$ gives $v(R-1)=R+1$. So the options from (c) are exactly $[2,R+1]$. Altogether $O(R)\supseteq[0,R+1]$ and $\max O(R)=R+1$, so $V^\phi_{\kappa_0}(R)=R+2$. (This argument works for every $R\ge8$.)

\emph{Case $\kappa$ of type P.} We compare $O(R)$ with $O(R-2)$, where $V^\phi_\kappa(R-2)=\mex O(R-2)$ is given by the table.
\begin{itemize}[leftmargin=*]
\item (a1) with a type-P successor $\kappa'$: since $R\ge10$, $V^\phi_{\kappa'}(R)=V^\phi_{\kappa'}(R-2)$ by (T1). So these options are the same at $R$ and $R-2$.
\item (a1) with a type-L successor: by (T3) and \cref{lem:universal} its value is $R$ or $R\oplus1\ge R-1$. By (T2) the option is at least $R-3$ at level $R$, and at least $R-5$ at level $R-2$. (We use $a\oplus d\ge a-d$.) Likewise (a2) gives options at least $R-2$, resp.\ $R-4$.
\item (a3) and (b) depend only on $R\bmod2$ by \eqref{eq:beta}, since $R-3\ge1$.
\item (c): we claim that the sets $O_c(R)$ and $O_c(R-2)$ of options from (c) are equal. Since $\rho(R-1-i;\delta,\phi)$ depends only on the parity of $R-1-i$, every option of $O_c(R-2)$, given by a pair $(i,\delta)$ with $i\le R-3$, is also in $O_c(R)$, given by the same pair. Conversely, the pairs at level $R$ not allowed at level $R-2$ are $(R-3,\bar\phi)$, $(R-2,\delta)$ and $(R-1,\phi)$. By (T1) (note $R-5\ge8$) their values equal those of the allowed pairs $(R-5,\bar\phi)$, $(R-4,\delta)$ and $(R-3,\phi)$ at level $R-2$: the family values agree, and so do the parities of the run lengths.
\end{itemize}
Hence $O(R)\cap[0,R-5)=O(R-2)\cap[0,R-5)$. Since $\mex O(R-2)=V^\phi_\kappa(R-2)\le7<R-5$, we get $V^\phi_\kappa(R)=V^\phi_\kappa(R-2)$, which is the table value by (T1).
\end{proof}

\subsection{Proof of \texorpdfstring{\cref{thm:s23}}{the theorem for S(2,3,z)}}
By \eqref{eq:gk0} we must show $V^\free_{\kappa_0}(z)=\pi(z)$ for $z\ge14$. By \cref{prop:spiderrec} the options of $(\sn;\{2^\free,3^\free,(z,\free)\})$ are as follows. Note that a cut of the long leg at $j=z-t$ detaches the nim heap $\rho(t;\delta,\free)=B_t=t$.
\begin{enumerate}[label=\textup{(\roman*)},leftmargin=*]
\item Short moves, all with successors (as $s=\sn$), giving $d\oplus V^\free_{\kappa'}(z)$:
 $1\oplus V^\free_{(\si;\{3^\free\})}(z)$, $1\oplus V^\free_{(\so;\{3^\free\})}(z)$,
 $V^\free_{(\sn;\{1^\delta,3^\free\})}(z)$,
 $2\oplus V^\free_{(\si;\{2^\free\})}(z)$, $2\oplus V^\free_{(\so;\{2^\free\})}(z)$,
 $1\oplus V^\free_{(\sn;\{2^\free,1^\delta\})}(z)$ and
 $V^\free_{(\sn;\{2^\free,2^\delta\})}(z)$, for $\delta\in\{\IN,\OUT\}$.
\item The move at $j=1$ on the long leg: $B_{z-1}\oplus W_\delta(\kappa_0)=(z-1)\oplus3$.
\item Cuts of the long leg: $t\oplus V^\delta_{\kappa_0}(z-1-t)$ for $0\le t\le z-2$ and both $\delta$. There is no restriction on $\delta$, because the far end is free.
\end{enumerate}

\begin{lemma}\label{lem:lowerbounds}
For $z\ge14$ and $\delta\in\{\IN,\OUT\}$:
$V^\free_{(\si;\{3^\free\})}(z)\ge2$, $V^\free_{(\sn;\{1^\delta,3^\free\})}(z)\ge2$, $V^\free_{(\si;\{2^\free\})}(z)\ge4$, $V^\free_{(\sn;\{2^\free,1^\delta\})}(z)\ge2$ and $V^\free_{(\sn;\{2^\free,2^\delta\})}(z)\ge2$. The same bounds hold with $\si$ replaced by $\so$.
\end{lemma}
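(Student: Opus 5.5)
The bounds concern free families $V^\free_\kappa(z)$, where the long leg is attached and its far end is free. In each case I will exhibit explicit options of \cref{prop:spiderrec} with values $0$ and $1$ (and, for $(\si;\{2^\free\})$, also $2$ and $3$). Then the mex is at least the stated bound. The natural options to use are cuts of the long leg near its outer end. Cutting the long leg $(z,\free)$ at its $(i+1)$-th edge in direction $\delta'$ detaches the nim heap $B_{z-1-i}=z-1-i$ and leaves the blocked family $V^{\delta'}_\kappa(i)$. So the option is $(z-1-i)\oplus V^{\delta'}_\kappa(i)$. To hit a small target value $v$, I choose $i$ with $V^{\delta'}_\kappa(i)=(z-1-i)\oplus v$.

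The key input is \cref{thm:table23}. For these five type-P configurations the blocked families are eventually $\pi$ or $\bar\pi$, or bounded periodic with small initial exceptions. Such small values cannot match the large heap $z-1-i$ unless $i$ is close to $z$. So I use cuts with $t=z-1-i\in\{0,1,2,3\}$, i.e.\ $i\in\{z-1,\dots,z-4\}$, which for $z\ge14$ are all at least $10$ and lie in the eventual regime. At $t=0$ (cutting the outermost edge) both directions are legal because the far end is free. This yields the options $V^\IN_\kappa(z-1)$ and $V^\OUT_\kappa(z-1)$. For families like $(\sn;\{2^\free,2^\IN\})$ these are $\{\pi,\bar\pi\}$ or $\{\bar\pi,\pi\}$ evaluated at $z-1$, i.e.\ $\{0,1\}$. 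For $(\si;\{3^\free\})$ the table gives $4+\bar\pi$ and $4+\pi$, so $t=0$ alone is not enough. There I will instead use short moves. For instance, the short move $(3,\delta)$ on $3^\free$ leads to $(\si;\{2^\delta\})$ with detached value $0$, whose free family I must know. This suggests that I need the free families of the smaller configurations as well, computed by a nested induction on configuration size.

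So the real structure is an induction over configurations $\kappa$ ordered by size. For each of the finitely many type-P configurations reachable here, I claim a lower bound, or a closed form, for $V^\free_\kappa(z)$ when $z\ge14$ (or some smaller threshold). I prove it by listing explicit options: short moves to successors whose free families are already controlled, the move (a2)/(a3)-type options $d\oplus z$ or $d\oplus\rho$, and long-leg cuts with $t\le3$ read off \cref{thm:table23}. For the smallest configurations, $(\si;\{1^\psi\})$ and $(\sn;\{1^\psi,1^{\psi'}\})$, I expect the free family to be simply $\pi$ or $\bar\pi$ shifted, or $z$ itself for $\kappa_1$ by analogy with \cref{lem:universal}, and these serve as base cases. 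The $\so$ versions follow from \cref{lem:reversal}, since reversal fixes $\free$.

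The main obstacle is $V^\free_{(\si;\{2^\free\})}(z)\ge4$, which needs the four values $0,1,2,3$ simultaneously. I would produce these as $t\oplus V^{\delta}_{(\si;\{2^\free\})}(z-1-t)$ for $t=0,1,2,3$, using that the blocked family there is $\pi$ for one direction and $\bar\pi$ for the other beyond $r=1$. Then $t=0$ gives $\{0,1\}$ and $t=2$ gives $2\oplus\{0,1\}=\{2,3\}$. All indices $z-1-t\ge10$ are safely in the eventual range. If this works, the same two-cut trick gives $\{0,1\}$ directly for every family in the list whose two blocked directions give complementary parities. This reduces the case analysis to $(\si;\{3^\free\})$ and $(\sn;\{1^\delta,3^\free\})$, where the entries $4+\pi$ and $2+\pi$ force the use of short moves or of cuts with $t=4,5$ (for example $t=4$ with $V=4+\pi$ gives $4\oplus4=0$ or $4\oplus5=1$). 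I would try the latter first, since it avoids needing any further free families.
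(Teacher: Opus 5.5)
Your final plan is correct and is essentially the paper's proof: exhibit long-leg cut options $t\oplus V^\delta_\sigma(z-1-t)$ with values read off \cref{tab:blocked23}, namely $t=0$ for the $2^\free$ families, $t=0$ and $t=2$ for $(\si;\{2^\free\})$, and $t=4$ for $(\si;\{3^\free\})$ (the paper uses $t=4$ and $t=5$ with one direction each). You then use \cref{lem:reversal} for the $\so$ versions, so the detour through short moves and free families of smaller configurations is not needed. One slip: for $(\sn;\{1^\delta,3^\free\})$, where the two blocked directions give $\{2,3\}$, the right cut is $t=2$ (giving $2\oplus2=0$ and $2\oplus3=1$), not $t=4,5$, which would only give $6$ and $7$.
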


\begin{proof}
For a configuration $\sigma$ the state $(s;K+\{(z,\free)\})$ has the cut options $t\oplus V^\delta_\sigma(z-1-t)$, $0\le t\le z-2$, $\delta\in\{\IN,\OUT\}$, as in (iii). We exhibit options $0,1$ (and $2,3$), using \cref{thm:table23}. Throughout let $i=z-1-t$.

$\sigma=(\si;\{3^\free\})$: $V^\IN_\sigma(i)=4$ for odd $i$, and $V^\OUT_\sigma(i)=4$ for even $i\ge8$. Take $t=4$, so $i=z-5\ge9$, and choose $\delta$ with $V^\delta_\sigma(i)=4$. This gives $4\oplus4=0$. Take $t=5$, so $i=z-6\ge8$, and choose $\delta$ with $V^\delta_\sigma(i)=4$. This gives $5\oplus4=1$.

$\sigma=(\sn;\{1^\IN,3^\free\})$: for $i\ge4$, $\{V^\IN_\sigma(i),V^\OUT_\sigma(i)\}=\{2,3\}$. With $t=2$ and $i=z-3$ we get the options $2\oplus2=0$ and $2\oplus3=1$.

$\sigma=(\si;\{2^\free\})$: for $i\ge2$, $\{V^\IN_\sigma(i),V^\OUT_\sigma(i)\}=\{0,1\}$. So $t=0$ and $t=2$ give the options $0,1$ and $2,3$.

$\sigma=(\sn;\{2^\free,1^\IN\})$ and $\sigma=(\sn;\{2^\free,2^\IN\})$: for $i\ge2$, $\{V^\IN_\sigma(i),V^\OUT_\sigma(i)\}=\{0,1\}$, and $t=0$ gives $0,1$.

The configurations with $1^\OUT$, $2^\OUT$ or $s=\so$ are reversals of these. By \cref{lem:reversal}, and since $\bar\free=\free$, they have the same free families.
\end{proof}

\begin{proof}[Proof of \cref{thm:s23}]
Let $z\ge14$. By \cref{lem:lowerbounds}, no option in (i) lies in $\{0,1\}$. For an option $a\oplus V$ with $a\in\{0,1\}$ this follows from $V\ge2$. For the options $2\oplus V$ it follows from $V\ge4$, because $2\oplus V\ge4$. The option (ii) is at least $8$, because $z-1\ge 13$ and $3<8$. In (iii), first let $i=z-1-t\le5$. Then $t\ge z-6\ge8$ and $V^\delta_{\kappa_0}(i)\le7$, so the option is at least $8$. Now let $i\ge6$. Then $V^\delta_{\kappa_0}(i)=i+2=z+1-t$ by \cref{thm:table23}, and the option $t\oplus(z+1-t)$ has the parity of $z+1$.

If $z$ is even, all options are therefore non-zero, so $V^\free_{\kappa_0}(z)=0$. If $z$ is odd, no option equals $1$, while $t=(z+1)/2$ gives $i=(z-3)/2\ge6$ and the option $t\oplus t=0$. So $V^\free_{\kappa_0}(z)=1$. Finally, $\gv(\Sp(2,3,13))=0$ is a direct computation.
\end{proof}

\begin{remark}
The threshold $14$ is forced from two sides. For odd $z$, the zero option is the cut $t=(z+1)/2$ in (iii). The remaining attached part has $i=(z-3)/2$ edges, and it must have at least $6$ edges so that \cref{tab:blocked23} gives $v(i)=i+2$; this needs $z\ge15$. For even $z$, the cuts with $i\le5$ must give options of at least $8$, i.e.\ $t\ge8$, and the explicit cuts in \cref{lem:lowerbounds} must exist; both need $z\ge14$. The mechanism is the one described in \cref{rem:large}: the zero option is $t\oplus t$, a nim heap of size $t$ cut off against an attached part of value~$t$.
\end{remark}

\subsection{A finite certificate for other pairs}\label{sec:cert}
The proof of \cref{thm:s23} has three ingredients. First, closed forms for all blocked families. Second, a propagation argument that extends them from a finite range to all $r$. Third, lower bounds for a few free families, obtained from explicit cuts. We now state these ingredients for an arbitrary pair, in a form that a program can check.

Fix $1\le x<y$, let $\kappa_0=(\sn;\{x^\free,y^\free\})$, and let $\mathcal K$ be the set of configurations reachable from $\kappa_0$ by successors. Let $\Pi\ge2$ be a power of two. A blocked family $V^\phi_\kappa$ is \emph{arithmetic-periodic} with slope $e\in\{0,1\}$, threshold $r_0(\kappa,\phi)$ and offsets $q^\phi_\kappa\colon\mathbb Z/\Pi\to\mathbb Z$ if
\[
  V^\phi_\kappa(r)=e\,r+q^\phi_\kappa(r\bmod\Pi)\qquad\text{for all }r\ge r_0(\kappa,\phi).
\]
A configuration is of \emph{type P} or of \emph{type L} if both of its blocked families are arithmetic-periodic of slope $0$, resp.\ $1$. In \cref{thm:table23}, $\Pi=2$. We put $\hat r=\max r_0(\kappa,\phi)$. Let $M_P$ be the largest value of a type-P family, and $d_{\max}$ the largest detached value of a short move of a configuration in $\mathcal K$. Let $\lambda=\min\bigl(0,\min q^\phi_\kappa\bigr)$, the inner minimum over all type-L families. Let $Q$ be the least power of two with $Q\ge\Pi$ and $Q>d_{\max}$.

\emph{Standing assumption.} Every configuration in $\mathcal K$ is of type P or of type L, with respect to one common $\Pi$. Here $M_P$ is the largest value of a type-P family over \emph{all} $r\ge1$, including the values below the thresholds. The quantity $s_{\max}$ of \cref{lem:propL} below is finite, because the options it refers to are periodic in $R$. The certificate program checks the standing assumption and rejects a pair otherwise.

We prove closed forms for all $r$ by induction on $r$ and then on the size of $\kappa$. The \emph{induction hypothesis at $(\kappa,R)$} says that the closed forms hold for all configurations at all $r<R$, and at $r=R$ for all configurations of smaller size than $\kappa$. Recall that a successor has smaller size. For $a\ge0$ and $d\ge0$ we use $a\oplus d\ge a-d$.

\begin{lemma}[Bounded families]\label{lem:propP}
Let $\kappa$ be of type P, let $R\ge R_P:=\max(2\Pi+1+\hat r,\ \Pi+M_P+d_{\max}-\lambda+1)$, and assume the induction hypothesis at $(\kappa,R)$. Then $V^\phi_\kappa(R)=V^\phi_\kappa(R-\Pi)$ for both $\phi$.
\end{lemma}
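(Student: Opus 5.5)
The plan is to generalise the type-P case in the proof of \cref{thm:table23}, with $\Pi$ in place of $2$. Write $O(R)$ for the option set of \cref{lem:famrec} for $(\kappa,\phi,R)$. We show that $O(R)$ and $O(R-\Pi)$ agree below a cutoff $N:=R-\Pi-d_{\max}+\lambda$. We also show $N>M_P\ge V^\phi_\kappa(R-\Pi)=\mex O(R-\Pi)$. Then the two mexes coincide. The second hypothesis on $R$ gives $N\ge M_P+1$ directly.

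We go through the option classes in turn, using the induction hypothesis throughout.
\begin{itemize}[leftmargin=*]
\item \emph{(a1), type-P successor $\kappa'$.} Since $R-\Pi\ge\hat r$, the closed form gives $V^\phi_{\kappa'}(R)=V^\phi_{\kappa'}(R-\Pi)$, so these options coincide at both levels.
\item \emph{(a1), type-L successor $\kappa'$.} Here $V^\phi_{\kappa'}(r)=r+q(r\bmod\Pi)\ge r+\lambda$. So the option is at least $R-\Pi+\lambda-d_{\max}=N$ at level $R-\Pi$, and at least $N+\Pi$ at level $R$. It therefore lies above the cutoff at both levels.
\item \emph{(a2).} The option $d\oplus r$ is at least $r-d_{\max}$, so it is also above the cutoff at both levels.
\item \emph{(a3) and (b).} These depend only on the parity of the run length, by \eqref{eq:beta}. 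That parity is the same at $R$ and $R-\Pi$ because $\Pi$ is even. We need $R-\Pi-1\ge1$, so that the excluded case $(0,\bar\phi)$ does not arise; this holds.
\end{itemize}

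For the cut options (c), we show $O_c(R)=O_c(R-\Pi)$, or at least that they agree below $N$. Every pair $(i,\delta)$ allowed at level $R-\Pi$ is allowed at level $R$. It gives the same value there, since $V^\delta_\kappa(i)$ is unchanged and the run parity of $R-1-i$ versus $R-\Pi-1-i$ agrees.

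Conversely, take a new pair $(i,\delta)$ at level $R$, one with $i\ge R-\Pi-1$ or $(i,\delta)=(R-\Pi-1,\bar\phi)$. We shift it to $(i-\Pi,\delta)$. Then $i-\Pi\ge R-2\Pi-1\ge\hat r$, which is where the first hypothesis $R\ge2\Pi+1+\hat r$ is used, so $V^\delta_\kappa(i)=V^\delta_\kappa(i-\Pi)$ by periodicity. The shifted pair is allowed at level $R-\Pi$: the only forbidden pair there is $(R-\Pi-1,\bar\phi)$, which is the shift of $(R-1,\bar\phi)$, and that pair is forbidden at level $R$ anyway. Parities of run lengths are preserved under the shift.

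For this to work, $\kappa$ must be of type P. In our setting that is true, but the family $V^\delta_\kappa$ in (c) is $\kappa$'s own family, so the argument stays within type P.

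The main obstacle is the bookkeeping in the cut step: checking that the shift by $\Pi$ is a legality-preserving bijection onto the allowed pairs, and that every index used satisfies $i-\Pi\ge\hat r$. The bounds on the other options should be routine once $a\oplus d\ge a-d$ and the minimum offset $\lambda$ are in place.
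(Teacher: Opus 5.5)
Your proposal is correct and follows the paper's proof essentially step for step. It uses the same class-by-class comparison of $O(R)$ with $O(R-\Pi)$, the same shift $(i,\delta)\mapsto(i-\Pi,\delta)$ of the new cut pairs justified by $R-2\Pi-1\ge\hat r$, and the same cutoff $R-\Pi+\lambda-d_{\max}>M_P$ for the mex. Your description of the new pairs as those with $i\ge R-\Pi-1$ also includes $(R-\Pi-1,\phi)$, which is not new, but shifting it does no harm.
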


\begin{proof}
We compare the option sets $O(R)$ and $O(R-\Pi)$ of \cref{lem:famrec}. An option (a1) with a type-P successor is the same at both levels, because $R-\Pi\ge\hat r$ and the period divides $\Pi$. An option (a1) with a type-L successor $\kappa'$ is $d\oplus V^\phi_{\kappa'}(R)\ge R+\lambda-d_{\max}$. An option (a2) is $d\oplus R\ge R-d_{\max}$. At level $R-\Pi$ these two kinds are at least $R-\Pi+\lambda-d_{\max}$. The options (a3) and (b) depend only on $R\bmod2$, by \eqref{eq:beta}, because $\Pi$ is even and $R-\Pi-1\ge1$.

For (c), the run length $R-1-i$ has the same parity as $R-\Pi-1-i$, so every allowed pair $(i,\delta)$ at level $R-\Pi$ gives the same option at level $R$. The allowed pairs at level $R$ that are not allowed at level $R-\Pi$ are of two kinds. The first kind is $(i,\delta)$ with $i\ge R-\Pi$. It gives the same value as the pair $(i-\Pi,\delta)$ at level $R-\Pi$, which is allowed because $(i,\delta)\ne(R-1,\bar\phi)$; here we use $i-\Pi\ge R-2\Pi\ge\hat r$. The second kind is $(R-\Pi-1,\bar\phi)$. It gives the same value as $(R-2\Pi-1,\bar\phi)$ at level $R-\Pi$; here we use $R-2\Pi-1\ge\hat r$. Hence the sets of options (c) agree.

So $O(R)$ and $O(R-\Pi)$ agree below $N_0=R-\Pi+\lambda-d_{\max}$. Since $V^\phi_\kappa(R-\Pi)\le M_P<N_0$, both mexes are equal.
\end{proof}

\begin{lemma}[Linear families]\label{lem:propL}
Let $\kappa$ be of type L and $r_\kappa=\max_\phi r_0(\kappa,\phi)$. For $\phi,\delta\in\{\IN,\OUT\}$, $\varrho\in\{0,1\}$ and $\iota\in\mathbb Z/\Pi$, put $b=\pi(\varrho-1-\iota)\oplus[\delta\ne\phi]$ and
\[
  \eta(\phi,\varrho,\iota,\delta)=q^\delta_\kappa(\iota)+b-2b\,\pi\bigl(\iota+q^\delta_\kappa(\iota)\bigr).
\]
Let $\eta_-$ and $\eta_+$ be the minimum and maximum of $\eta$, and $q_+$ the maximum of $q^\delta_\kappa$. Put $\ell=r_\kappa+\eta_+$, $k_0=2-\eta_-$ and $M_0=q_++k_0+1$. Fix $i_{\max}\ge r_\kappa$. Let $s_{\max}$ be the largest option of kind (a1) with a type-P successor, (a3) or (b) of $(\kappa,\phi,R)$, over all $R\ge\hat r+2$ and both $\phi$. Let $a_{\max}=\max\{V^\delta_\kappa(i):i<r_\kappa\}$. Let
\[
  R_W\ge\max\bigl(r_\kappa+M_0,\ s_{\max}+k_0+1,\ a_{\max}+k_0+2,\ i_{\max}+2,\ \hat r+\Pi+1\bigr).
\]
Assume:
\begin{enumerate}[label=\textup{(W\arabic*)},leftmargin=*]
\item for all $\phi,\varrho$, the residues $\iota+\eta(\phi,\varrho,\iota,\delta)\bmod\Pi$, over all $\iota$ and $\delta$, exhaust $\mathbb Z/\Pi$;
\item for all $R_W\le R<R_W+\Pi$ and both $\phi$, the options of $(\kappa,\phi,R)$ of kind (a1) with a type-P successor, (a3), (b), and (c) with $i\le i_{\max}$ cover $[0,\ell)$.
\end{enumerate}
If $R\ge R_W+Q$ and the induction hypothesis holds at $(\kappa,R)$, then $V^\phi_\kappa(R)=V^\phi_\kappa(R-Q)+Q$ for both $\phi$.
\end{lemma}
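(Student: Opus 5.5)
The plan is to mirror the $\kappa_0$ case of \cref{thm:table23}. I will show two things about the option set $O(R)$ of \cref{lem:famrec} for $(\kappa,\phi,R)$: that $[0,V^\phi_\kappa(R-Q)+Q)\subseteq O(R)$, and that the value $V^\phi_\kappa(R-Q)+Q$ itself is not an option. Write $R=R'+Q$ with $R'\ge R_W$. Since $Q$ is a multiple of $\Pi$ and $R-Q\ge r_\kappa$, the closed form gives $V^\phi_\kappa(R)$ predicted $=R+q^\phi_\kappa(R\bmod\Pi)$, and it equals $V^\phi_\kappa(R-Q)+Q$. So it suffices to prove that this predicted value $N$ is the mex of $O(R)$.

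First, the low range $[0,\ell)$. The options of kind (a1) with a type-P successor, (a3), (b), and (c) with $i\le i_{\max}$ are unchanged when $R$ is replaced by $R-\Pi$. For (a1) this follows from $R-\Pi\ge\hat r$ and periodicity. For (a3) and (b) it follows from \eqref{eq:beta}. For (c) with $i\le i_{\max}\le R-2$ the run length changes by an even amount. Iterating this down to the window $[R_W,R_W+\Pi)$, (W2) shows that $[0,\ell)\subseteq O(R)$.

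Second, the middle and top range $[\ell,N)$. Here I use the cuts (c) with $r_\kappa\le i\le R-1$. By the induction hypothesis, $V^\delta_\kappa(i)=i+q^\delta_\kappa(i\bmod\Pi)$. The detached run contributes $b=\pi(R-1-i)\oplus[\delta\ne\phi]\in\{0,1\}$, and XOR with a bit $b$ changes $i+q$ by $+b$ if $i+q$ is even and by $-b$ if it is odd. That is exactly $i+\eta(\phi,\varrho,\iota,\delta)$ with $\varrho=R\bmod 2$ and $\iota=i\bmod\Pi$. As $i$ runs over a block of $\Pi$ consecutive values, (W1) says these options hit every residue class mod $\Pi$, each shifted by a bounded amount in $[\eta_-,\eta_+]$. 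So the options $i+\eta$ for $r_\kappa\le i\le R-1$ cover every integer from about $r_\kappa+\eta_+=\ell$ up to about $R-1+\eta_-$.

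The gap between $R-1+\eta_-$ and $N$ is where $k_0=2-\eta_-$ and $M_0$ enter. I must make sure the top few values just below $N$ are also covered. I expect this bookkeeping of the top boundary to be the main obstacle. I would handle it by comparing with level $R-Q$, where the same cuts realise the values just below $V(R-Q)$ by the induction hypothesis, and then shifting by $Q$ (the pairs $(i,\delta)\mapsto(i+Q,\delta)$). The restriction $(i,\delta)\ne(R-1,\bar\phi)$ at the top is handled by the same shift. The condition $R\ge R_W+Q$ guarantees that these shifted pairs lie in the closed-form range.

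Finally, I must show that $N\notin O(R)$. Options of kind (a1) with a type-P successor, (a3) and (b) are at most $s_{\max}<R_W-k_0\le N$. An option (a1) with a type-L successor has the form $d\oplus V$ with $d\le d_{\max}<Q$. For (a2), the value is $d\oplus R$. For these two kinds, and for cuts (c) with $i<r_\kappa$ (values at most $a_{\max}\oplus 1$, below $N$ by the bound on $R_W$), the point of choosing $Q>d_{\max}$, a power of two, is that XOR with $d<Q$ only alters the low bits. So these options are congruent mod $Q$ to the corresponding options at level $R-Q$, each shifted by exactly $Q$. Since $V(R-Q)$ was not an option at level $R-Q$, the shifted set misses $N$. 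For the remaining cuts with $i\ge r_\kappa$, the same shift argument applies via $(i,\delta)\mapsto(i-Q,\delta)$. The cuts with $i<Q+r_\kappa$ must be checked separately: their values are at most $Q+r_\kappa+q_+$, which lies below $N$ once $R\ge R_W+Q\ge r_\kappa+M_0+Q$. Hence $\mex O(R)=N=V^\phi_\kappa(R-Q)+Q$.
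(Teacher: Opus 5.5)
Your outline follows the paper's proof. You get $[0,\ell)$ from (W2) by $\Pi$-periodicity, the middle range from (W1) via the cut values $i+\eta$, and the top by comparing levels $R$ and $R-Q$. The middle range reaches exactly $R-2+\eta_-=R-k_0$, because you must keep $i\le R-2$.

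\textbf{The gap.} It lies in your covering of the top window $(R-k_0,N)$. You assert that at level $R-Q$ the values just below $V^\phi_\kappa(R-Q)$ are realised by cuts, and you transport only cuts, $(i,\delta)\mapsto(i+Q,\delta)$. The induction hypothesis only says that these values lie in $O(R-Q)$. They may be realised solely by options of kind (a2), or (a1) with a type-L successor.

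This really happens. The configuration $\kappa_2=(\si;\{(1,\OUT)\})$ is of type L, and \cref{thm:cert} applies the lemma to it. For even $r$, the proof of \cref{lem:universal} shows that every cut has value at most $r-1$. The value $r=V^\phi_{\kappa_2}(r)-1$ arises only from the option (a2), namely $0\oplus r$. So as written, your argument never puts $N-1$ into $O(R)$, and the mex is not established.

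\textbf{The repair.} You already have most of what is needed. Your last paragraph shows that options of kind (a1) with a type-L successor, and of kind (a2), move by exactly $Q$ between the two levels. This holds because $d\le d_{\max}<Q$, $Q$ is a power of two, and $\Pi\mid Q$, and the shift works in both directions.

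You also need that no other kind of option in $O(R-Q)$ exceeds $R-Q-k_0$:
\begin{itemize}
\item options (a1) with a type-P successor, (a3) and (b) are at most $s_{\max}\le R_W-k_0-1$;
\item cuts with $i<r_\kappa$ are at most $a_{\max}+1\le R_W-k_0-1$.
\end{itemize}

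With this, your two directions combine into the paper's step (3): $R+u\in O(R)$ if and only if $R-Q+u\in O(R-Q)$, for all $u\ge -k_0$. Together with $[0,R'-k_0]\subseteq O(R')$ at both levels, this gives $V^\phi_\kappa(R)=V^\phi_\kappa(R-Q)+Q$. Your argument that $N\notin O(R)$ is correct as written.
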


\begin{proof}
Let $R'\in\{R,R-Q\}$. Then $R'\ge R_W$, and the closed forms hold for every value used below.

(1) Let $i\ge r_\kappa$, $i\equiv\iota\pmod\Pi$, $R'\equiv\varrho\pmod 2$, and let $(i,\delta)$ be allowed. By \eqref{eq:beta}, the option (c) of $(i,\delta)$ is $(i+q^\delta_\kappa(\iota))\oplus b=i+\eta(\phi,\varrho,\iota,\delta)$. Here we use $a\oplus b=a+b-2b\,\pi(a)$ for $b\in\{0,1\}$, and $\pi(i+q)=\pi(\iota+q)$ since $\Pi$ is even.

(2) We show $[0,R'-k_0]\subseteq O(R')$. The options listed in (W2) depend only on $R'\bmod\Pi$, because $R'\ge\hat r+2$, $R'-1-i\ge1$, and both directions are allowed for $i\le i_{\max}\le R'-2$. So they cover $[0,\ell)$. Let $\ell\le w\le R'-k_0=R'-2+\eta_-$. By (W1) choose $(\iota,\delta)$ with $\iota+\eta\equiv w\pmod \Pi$, where $\eta=\eta(\phi,\varrho,\iota,\delta)$, and put $i=w-\eta$. Then $i\equiv\iota$, $i\ge\ell-\eta_+=r_\kappa$ and $i\le R'-2$. So $(i,\delta)$ is allowed, and by (1) it gives the option $w$.

(3) We show that $R+u\in O(R)$ if and only if $R-Q+u\in O(R-Q)$, for every $u\ge-k_0$. An option of $(\kappa,\phi,R')$ that is at least $R'-k_0$ is of one of three kinds: (c) with $i\ge R'-M_0$, (a1) with a type-L successor, or (a2). Indeed, (c) with $r_\kappa\le i<R'-M_0$ gives at most $i+q_++1<R'-k_0$; (c) with $i<r_\kappa$ gives at most $a_{\max}+1<R'-k_0$; and the remaining options are at most $s_{\max}<R'-k_0$.
\begin{itemize}[leftmargin=*]
\item Kind (c) with $i=R'-m$, $1\le m\le M_0$. Here $i\ge r_\kappa$, and the pair $(R'-m,\delta)$ is allowed for the same $(m,\delta)$ at both levels. By (1) its value is $R'-m+\eta(\phi,\varrho,\iota,\delta)$ with $\iota=(R'-m)\bmod\Pi$. This offset from $R'$ is the same at both levels, because $\Pi$ divides $Q$.
\item Kind (a1) with a type-L successor $\kappa'$. The value is $d\oplus a$ with $a=R'+q^\phi_{\kappa'}(R'\bmod\Pi)$. For $d<2^k$ the difference $(a\oplus d)-a$ depends only on $a\bmod 2^k$. Since $d\le d_{\max}<Q$ and $Q$ is a multiple of $\Pi$, the offset from $R'$ is again the same at both levels.
\item Kind (a2), with value $d\oplus R'$: the same argument applies.
\end{itemize}

(4) By (2), $\mex O(R')=R'+u^*$ with $u^*=\min\{u\ge 1-k_0: R'+u\notin O(R')\}$. By (3), $u^*$ is the same for $R'=R$ and $R'=R-Q$.
\end{proof}

\begin{lemma}\label{lem:F1}
Let $\sigma=(\si;\{1^\free\})$. Suppose that $V^\delta_\sigma(i)-i$ is odd for all $i\ge1$ and both $\delta$. Then $V^\free_\sigma(z)=\bar\pi(z)$ for all $z\ge1$. The same holds for $(\so;\{1^\free\})$.
\end{lemma}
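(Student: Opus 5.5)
The plan is to enumerate the options of the spider state $(\si;\{(1,\free),(z,\free)\})$ by \cref{prop:spiderrec}, and then show that every option has the parity of $z$. Because $|L|=2$ and $s=\si$, we are in a non-degenerate case, and by definition $V^\free_\sigma(z)=\Gamma(\si;\{(1,\free),(z,\free)\})$. The case $(\so;\{1^\free\})$ then follows from \cref{lem:reversal}, since $\bar\free=\free$ and the hypothesis is symmetric under reversal.

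First, the moves on the short leg $(1,\free)$. Only $j=1$ is possible, with either direction, and the detached run is $\rho(0;\delta,\free)=B_0=0$.
\begin{itemize}
\item For $\delta=\IN$ the centre keeps status $\si\cdot\si=\si$ with the single leg $(z,\free)$. By (N2) this gives $\rho(z;\OUT,\free)=B_z=z$.
\item For $\delta=\OUT$ the centre becomes $\sff$. By (N1) this gives $\rho_\free(z;\free)=A_z=\pi(z)$.
\end{itemize}

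Next, the moves on the long leg.
\begin{itemize}
\item At $j=1$ the detached run has value $B_{z-1}=z-1$. The remainder is $\Gamma(\si;\{1^\free\})=B_1=1$ by (N2) for $\delta=\IN$, and $\Gamma(\sff;\{1^\free\})=A_1=1$ by (N1) for $\delta=\OUT$. So both directions give $(z-1)\oplus1$.
\item At $j=i+1$ with $1\le i\le z-1$, both directions are allowed because the far end is free. The detached nim heap is $B_{z-1-i}=z-1-i$, and the attached part is $V^\delta_\sigma(i)$. The option is $(z-1-i)\oplus V^\delta_\sigma(i)$.
\end{itemize}

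Now the parity count, using $a\oplus b\equiv a+b\pmod 2$. The options $z$, $\pi(z)$ and $(z-1)\oplus1$ are all congruent to $z$. By the hypothesis $V^\delta_\sigma(i)\equiv i+1$, so the cut options are congruent to $(z-1-i)+(i+1)=z$. Hence every option is $\equiv z\pmod 2$.

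If $z$ is even, the option $\pi(z)=0$ is present and no option equals $1$, so the mex is $1=\bar\pi(z)$. If $z$ is odd, every option is odd, so the mex is $0=\bar\pi(z)$.

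There is no real obstacle. The only care needed is in reading off the degenerate cases (N1) and (N2) correctly: which status results from $\si\cdot\delta$, and that $\rho(\cdot;\delta,\free)=B$. The case $z=1$ is covered, since then there are no cut options and the argument is unchanged.
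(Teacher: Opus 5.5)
Your proposal is correct and follows the paper's proof essentially line by line. You use the same option list (the short leg inwards via (N2) giving $z$, outwards via (N1) giving $\pi(z)$, the long leg at $j=1$ giving $(z-1)\oplus1$, and the cuts), the same observation that every option has the parity of $z$, and the same appeal to \cref{lem:reversal} for $(\so;\{1^\free\})$. The only cosmetic difference is that you index the cuts by the attached length $i$ rather than by the detached length $t=z-1-i$.
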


\begin{proof}
The options of $(\si;\{1^\free,(z,\free)\})$ are as follows. Moving $1^\free$ inwards gives $\rho(z;\OUT,\free)=z$, by (N2). Moving it outwards gives $\rho_\free(z;\free)=\pi(z)$, by (N1). The move at $j=1$ on the long leg gives $(z-1)\oplus1$; the remaining value is $B_1=1$ or $A_1=1$. The cuts give $t\oplus V^\delta_\sigma(z-1-t)$. All of these have the parity of $z$. If $z$ is even, $0=\pi(z)$ is an option and $1$ is not. If $z$ is odd, $0$ is not an option. For $(\so;\{1^\free\})$ use \cref{lem:reversal}.
\end{proof}

\begin{proposition}\label{prop:certM}
Assume that all blocked families of configurations in $\mathcal K$ satisfy their closed forms for all $r\ge1$. Suppose that $\kappa_0$ is of type L with $q^\IN_{\kappa_0}=q^\OUT_{\kappa_0}\equiv c$ constant and $c$ even, and that $V^\IN_{\kappa_0}(i)=V^\OUT_{\kappa_0}(i)$ for $i<r_{\kappa_0}$. Suppose that every short move of $\kappa_0$, with detached value $d$ and successor $\sigma$, satisfies one of the following:
\begin{enumerate}[label=\textup{(M\arabic*)},leftmargin=*]
\item $\sigma\in\{(\si;\{1^\free\}),(\so;\{1^\free\})\}$, $d\ge2$, and the parity hypothesis of \cref{lem:F1} holds;
\item $\sigma$ is of type P, and for every residue $p\in\mathbb Z/\Pi$ and every $0\le v\le (d\,|\,1)$ there are $t\ge0$ and $\delta$ with $t\oplus q^\delta_\sigma((p-1-t)\bmod\Pi)=v$. Here $d\,|\,1$ is the bitwise or.
\end{enumerate}
Let $z_1$ be the maximum of the following numbers: $2r_{\kappa_0}+c+1$; $i+V^\IN_{\kappa_0}(i)+3$ for $i<r_{\kappa_0}$; $W_\delta(\kappa_0)+3$; and $t+1+r_0(\sigma,\delta)$ over the pairs $(t,\delta)$ used in (M2). Then $\gv(\Sp(x,y,z))=z\bmod2$ for all $z\ge z_1$.
\end{proposition}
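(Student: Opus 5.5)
We follow the proof of \cref{thm:s23} closely, replacing each explicit use of \cref{tab:blocked23} by the hypotheses (M1), (M2) and the closed forms. By \eqref{eq:gk0} we must show $V^\free_{\kappa_0}(z)=\pi(z)$ for $z\ge z_1$. We list the options of $(\sn;\{x^\free,y^\free,(z,\free)\})$ from \cref{prop:spiderrec} in three groups. Group (i) consists of the short moves, each of the form $d\oplus V^\free_\sigma(z)$ with $\sigma$ a successor (as $s=\sn$). Group (ii) is the move at $j=1$ on the long leg, $(z-1)\oplus W_\delta(\kappa_0)$. Group (iii) consists of the cuts $t\oplus V^\delta_{\kappa_0}(z-1-t)$ for $0\le t\le z-2$ and both $\delta$. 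The goal is the usual dichotomy: no option lies in $\{0,1\}$ except possibly through group (iii); every option in group (iii) that could be small has the parity of $z+1$; and for odd $z$ the option $0$ actually occurs.

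First, group (iii). Put $i=z-1-t$. If $i\ge r_{\kappa_0}$, the closed form gives $V^\delta_{\kappa_0}(i)=i+c$, so the option is $t\oplus(z-1-t+c)$. Since $c$ is even, this has the parity of $z-1$, i.e.\ of $z+1$. If $i<r_{\kappa_0}$, then $t=z-1-i$ is large, and the option is at least $t-V^\IN_{\kappa_0}(i)\ge z-1-i-V^\IN_{\kappa_0}(i)\ge2$, by the term $i+V^\IN_{\kappa_0}(i)+3$ in $z_1$. We use here that $V^\IN=V^\OUT$ below the threshold. For odd $z$ we choose $t$ with $t=z-1-t+c$, namely $t=(z-1+c)/2$. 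This needs $i=(z-1-c)/2\ge r_{\kappa_0}$, which is guaranteed by $z\ge 2r_{\kappa_0}+c+1$, and it yields the option $0$. Group (ii) is at least $(z-1)-W_\delta(\kappa_0)\ge2$, by the term $W_\delta(\kappa_0)+3$ in $z_1$.

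Group (i) is where the hypotheses enter. In case (M1), \cref{lem:F1} gives $V^\free_\sigma(z)=\bar\pi(z)\in\{0,1\}$, and $d\ge2$ forces $d\oplus V^\free_\sigma(z)\ge2$. In case (M2) we aim to show $V^\free_\sigma(z)>(d\,|\,1)$, i.e.\ that $0,\dots,d\,|\,1$ are all options of the state $(s_\sigma;K_\sigma+\{(z,\free)\})$. Its cut options are $t\oplus V^\delta_\sigma(z-1-t)$. Taking $p=z\bmod\Pi$ and the pairs $(t,\delta)$ supplied by (M2), we have $z-1-t\ge r_0(\sigma,\delta)$ thanks to the last term in $z_1$. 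The type-P closed form therefore gives $V^\delta_\sigma(z-1-t)=q^\delta_\sigma((p-1-t)\bmod\Pi)$, so each $v\le d\,|\,1$ is realised. Hence $V^\free_\sigma(z)\ge (d\,|\,1)+1$. We then need the elementary bit fact that $V>(d\,|\,1)$ implies $d\oplus V\ge2$. Indeed, if $d\oplus V\le1$ then $V$ and $d$ agree above bit $0$, so $V\le d\,|\,1$. Combining the three groups, for even $z$ no option is $0$, so the value is $0$. For odd $z$ the option $0$ is present and no option equals $1$, since group (iii) options there are even and the rest are $\ge2$. So the value is $1$.

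The main obstacle is the careful bookkeeping in group (i). One must check that the successors of $\kappa_0$ really fall into (M1) or (M2), which is the hypothesis, and that the free family of a type-P successor is controlled solely by its blocked families through cuts. The second point holds because the bound only needs options to be present, not a full mex computation. One must also make sure that the degenerate cases (N1)--(N3) do not arise for these successor states: every $\sigma$ with $s\in\{\si,\so\}$ still has two attached legs once the long leg is added.
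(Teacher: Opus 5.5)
Your proposal is correct and follows the paper's proof essentially step for step. You use the same three groups of options, the bound $t-V^\delta_{\kappa_0}(i)\ge 2$ for cuts leaving $i<r_{\kappa_0}$, the zero option at $i=(z-1-c)/2$, and \cref{lem:F1} and the (M2) cuts to push $V^\free_\sigma(z)$ above $d\,|\,1$. You also make explicit two points the paper leaves implicit: why $V>d\,|\,1$ forces $d\oplus V\ge2$, and why the successor states with the long leg attached are non-degenerate.
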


\begin{proof}
We follow the proof of \cref{thm:s23}. The option of a short move is $d\oplus V^\free_\sigma(z)$. In case (M1), $V^\free_\sigma(z)\in\{0,1\}$ by \cref{lem:F1}, and $d\ge2$, so the option is at least $2$. In case (M2), the cut $(t,\delta)$ of $\sigma$ at $(z,\free)$ has value $t\oplus V^\delta_\sigma(z-1-t)=v$ when $z\equiv p\pmod\Pi$, because $z-1-t\ge r_0(\sigma,\delta)$. Hence $V^\free_\sigma(z)>d\,|\,1$, so $d\oplus V^\free_\sigma(z)\ge2$. The move at $j=1$ on the long leg gives $(z-1)\oplus W_\delta(\kappa_0)\ge z-1-W_\delta(\kappa_0)\ge2$. A cut of the long leg with $i=z-1-t<r_{\kappa_0}$ gives at least $t-V^\delta_{\kappa_0}(i)\ge2$. A cut with $i\ge r_{\kappa_0}$ gives $t\oplus(i+c)$, which has the parity of $z-1+c\equiv z+1$. For odd $z$ the cut with $i=(z-1-c)/2\ge r_{\kappa_0}$ gives $0$. So the mex is $\pi(z)$, as in the proof of \cref{thm:s23}.
\end{proof}

\begin{theorem}\label{thm:cert}
Let $(x,y)$ be one of the following pairs:
\begin{itemize}[leftmargin=*]
\item $1\le x<y\le 12$, except $(x,y)=(1,y)$ with $y$ even;
\item $x\le 7$ and $y\in\{13,14\}$, except $(1,14)$;
\item $x=2$ and $15\le y\le 40$.
\end{itemize}
Then $\gv(\Sp(x,y,z))=z\bmod2$ for all $z\ge z_0(x,y)$, where $z_0(x,y)$ is the least such number, listed in \cref{tab:cert}. In particular \cref{conj:eventual} holds for each of these pairs. If $x\not\equiv y\pmod 2$, then every $\Sp(x,y,z)$ with $z\ge z_0(x,y)$ violates the parity conjecture. If $x\equiv y\pmod2$, then every such spider satisfies it.
\end{theorem}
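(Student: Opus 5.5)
The proof is a computer-checked certificate, organised exactly as in the proof of \cref{thm:s23} but with the finite facts checked by machine for each listed pair $(x,y)$. For a fixed pair I first enumerate the set $\mathcal K$ of configurations reachable from $\kappa_0=(\sn;\{x^\free,y^\free\})$ by successors. For each $\kappa\in\mathcal K$ and $\phi\in\{\IN,\OUT\}$ I compute the blocked families $V^\phi_\kappa(r)$ directly from \cref{prop:spiderrec} and \cref{thm:runs}, on a range $r\le r_{\mathrm{comp}}$ comfortably beyond all thresholds. From these values I guess a common power of two $\Pi$ and closed forms $V^\phi_\kappa(r)=e\,r+q^\phi_\kappa(r\bmod\Pi)$ with slope $e\in\{0,1\}$ and thresholds $r_0(\kappa,\phi)$. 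This classifies every configuration as type P or type L, and so verifies the standing assumption. Then I read off the constants $\hat r$, $M_P$, $d_{\max}$, $\lambda$, $Q$, and for each type-L configuration the quantities $\eta_\pm$, $q_+$, $\ell$, $k_0$, $M_0$, $s_{\max}$, $a_{\max}$, together with a choice of $i_{\max}$ and $R_W$.

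Next I extend the closed forms to all $r$ by the double induction (on $r$, then on the size of $\kappa$) described before \cref{lem:propP}. Base case: the program confirms the closed forms for all $r<\max(R_P,\,R_W+Q)$ over all $\kappa\in\mathcal K$; the computed range must include this bound. Inductive step: for type P, \cref{lem:propP} gives $V(R)=V(R-\Pi)$, which matches the closed form since the period divides $\Pi$. For type L, I check the finitely many conditions (W1) and (W2); these are residue exhaustion and coverage of $[0,\ell)$ on one window of length $\Pi$. Then \cref{lem:propL} gives $V(R)=V(R-Q)+Q$, consistent with slope $1$ because $\Pi\mid Q$. The quantity $s_{\max}$ is finite and computable because the options it ranges over are periodic in $R$.

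Finally I apply \cref{prop:certM}. The program checks that $\kappa_0$ is of type L with a constant even offset $c$ and equal $\IN/\OUT$ values below threshold. For every short move of $\kappa_0$ it checks (M1) or (M2). For (M1), it verifies the parity hypothesis of \cref{lem:F1} on the closed forms of $(\si;\{1^\free\})$, which then holds for all $i$. For (M2), it searches for explicit witnesses $(t,\delta)$. The proposition gives $\gv=z\bmod 2$ for $z\ge z_1$. The remaining range $z<z_1$ is settled by direct computation with the spider solver. That computation determines the least threshold $z_0(x,y)$ of \cref{tab:cert} and shows it is sharp, since $\gv(\Sp(x,y,z_0-1))\ne(z_0-1)\bmod2$. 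The statements about the parity conjecture then follow because $\m=x+y+z$.

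The main obstacle is that nothing guarantees the standing assumption or (W1), (W2), (M1), (M2) hold for a given pair. The guessed $\Pi$ or thresholds may be too small, or a short move of $\kappa_0$ may fail both (M1) and (M2). My plan for this is to increase $\Pi$ (up to $32$) and enlarge $i_{\max}$ and the computed range until the checks pass. If some pair still fails, I would hand-treat that move's successor, as in \cref{lem:lowerbounds}. The listed pairs are presumably exactly those for which the automatic certificate succeeds, so the real work is running and cross-checking it with an independent solver.
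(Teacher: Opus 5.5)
Your proposal is correct and follows the paper's proof: compute all blocked families of the configurations reachable from $\kappa_0$ on a finite range ($R_1-1=160$, or $200$ for $(2,33)$), fit closed forms with a common $\Pi$, check $R_1\ge R_P$, $R_1\ge R_W+Q$, (W1), (W2) and the hypotheses of \cref{prop:certM}, extend to all $r$ by the double induction via \cref{lem:propP,lem:propL}, and settle $z<z_1$ (including the sharpness of $z_0$) by direct computation. Your fallback plans are not needed, since the paper reports that the automatic certificate succeeds for every listed pair.
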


\begin{proof}
For each pair, the program \texttt{certify\_gen.py} computes all blocked families for $r\le R_1-1$ by \cref{prop:spiderrec}, where $R_1-1=160$ (for the pair $(2,33)$, $R_1-1=200$). It then finds $\Pi$, the types and the closed forms, and checks that the forms agree with the computed values on $[1,R_1)$. It checks that $R_1\ge R_P$; that (W1) and (W2) hold for every type-L configuration, with $R_1\ge R_W+Q$; and the hypotheses of \cref{prop:certM}. For $R\ge R_1$, induction on $R$ and on the size, using \cref{lem:propP,lem:propL}, shows that the closed forms hold for all $r$. Then \cref{prop:certM} applies for $z\ge z_1$. The finitely many $z<z_1$ were computed directly; they show that $z_0(x,y)\le z_1$ is the least threshold.
\end{proof}

The excluded pairs $(1,2j)$ behave differently; they are treated in \cref{sec:onetwoj}.

\begin{table}[t]
\centering
\footnotesize
\setlength{\tabcolsep}{3.2pt}
\begin{tabular}{c|ccccccccccccc}
\toprule
$x\backslash y$ & 2 & 3 & 4 & 5 & 6 & 7 & 8 & 9 & 10 & 11 & 12 & 13 & 14\\
\midrule
1 & --- & $\mathrm{P}\,14$ & --- & $\mathrm{P}\,28$ & --- & $\mathrm{P}\,34$ & --- & $\mathrm{P}\,56$ & --- & $\mathrm{P}\,22$ & --- & $\mathrm{P}\,68$ & ---\\
2 &  & $\mathrm{A}\,14$ & $\mathrm{P}\,1$ & $\mathrm{A}\,28$ & $\mathrm{P}\,1$ & $\mathrm{A}\,34$ & $\mathrm{P}\,4$ & $\mathrm{A}\,56$ & $\mathrm{P}\,4$ & $\mathrm{A}\,22$ & $\mathrm{P}\,8$ & $\mathrm{A}\,68$ & $\mathrm{P}\,8$\\
3 &  &  & $\mathrm{A}\,15$ & $\mathrm{P}\,26$ & $\mathrm{A}\,13$ & $\mathrm{P}\,32$ & $\mathrm{A}\,15$ & $\mathrm{P}\,54$ & $\mathrm{A}\,25$ & $\mathrm{P}\,10$ & $\mathrm{A}\,23$ & $\mathrm{P}\,66$ & $\mathrm{A}\,29$\\
4 &  &  &  & $\mathrm{A}\,30$ & $\mathrm{P}\,16$ & $\mathrm{A}\,34$ & $\mathrm{P}\,16$ & $\mathrm{A}\,58$ & $\mathrm{P}\,24$ & $\mathrm{A}\,25$ & $\mathrm{P}\,24$ & $\mathrm{A}\,70$ & $\mathrm{P}\,32$\\
5 &  &  &  &  & $\mathrm{A}\,26$ & $\mathrm{P}\,34$ & $\mathrm{A}\,29$ & $\mathrm{P}\,50$ & $\mathrm{A}\,27$ & $\mathrm{P}\,21$ & $\mathrm{A}\,28$ & $\mathrm{P}\,64$ & $\mathrm{A}\,31$\\
6 &  &  &  &  &  & $\mathrm{A}\,30$ & $\mathrm{P}\,16$ & $\mathrm{A}\,54$ & $\mathrm{P}\,28$ & $\mathrm{A}\,29$ & $\mathrm{P}\,30$ & $\mathrm{A}\,62$ & $\mathrm{P}\,28$\\
7 &  &  &  &  &  &  & $\mathrm{A}\,31$ & $\mathrm{P}\,32$ & $\mathrm{A}\,29$ & $\mathrm{P}\,21$ & $\mathrm{A}\,28$ & $\mathrm{P}\,64$ & $\mathrm{A}\,33$\\
8 &  &  &  &  &  &  &  & $\mathrm{A}\,52$ & $\mathrm{P}\,32$ & $\mathrm{A}\,26$ & $\mathrm{P}\,34$ &  & \\
9 &  &  &  &  &  &  &  &  & $\mathrm{A}\,54$ & $\mathrm{P}\,54$ & $\mathrm{A}\,53$ &  & \\
10 &  &  &  &  &  &  &  &  &  & $\mathrm{A}\,33$ & $\mathrm{P}\,32$ &  & \\
11 &  &  &  &  &  &  &  &  &  &  & $\mathrm{A}\,40$ &  & \\
\bottomrule
\end{tabular}
\caption{The thresholds $z_0(x,y)$ of \cref{thm:cert}: $\gv(\Sp(x,y,z))=z\bmod 2$ holds for \emph{all} $z\ge z_0(x,y)$, and fails for $z=z_0(x,y)-1$ when $z_0>1$. $\mathrm{A}$: $x\not\equiv y\pmod2$, the spiders violate the parity conjecture for all $z\ge z_0$; $\mathrm{P}$: $x\equiv y\pmod 2$, they satisfy it. A dash marks the excluded pairs $(1,\text{even})$. For $x=2$ and $15\le y\le 40$ the values $y{:}\,z_0(2,y)$ are $15{:}\,74$, $16{:}\,12$, $17{:}\,112$, $18{:}\,12$, $19{:}\,30$, $20{:}\,16$, $21{:}\,44$, $22{:}\,16$, $23{:}\,50$, $24{:}\,20$, $25{:}\,136$, $26{:}\,20$, $27{:}\,38$, $28{:}\,24$, $29{:}\,148$, $30{:}\,24$, $31{:}\,154$, $32{:}\,28$, $33{:}\,224$, $34{:}\,28$, $35{:}\,46$, $36{:}\,32$, $37{:}\,60$, $38{:}\,32$, $39{:}\,66$, $40{:}\,36$. For $y\le 12$ the thresholds coincide with \cref{tab:families}.}
\label{tab:cert}
\end{table}

\subsection*{Programs for this section}
\texttt{verify23.py} checks \cref{tab:blocked23} for $r\le100$ and computes $\Gamma(\si;\{2^\free,3^\free\})=3$ and $\gv(\Sp(2,3,z))$ for $z\le160$. \texttt{certify\_gen.py} checks the hypotheses of \cref{lem:propP,lem:propL,prop:certM} for one pair $(x,y)$ and writes the closed forms of all its blocked families to a file. Both programs evaluate $\Gamma$ only through \cref{prop:spiderrec} and \cref{thm:runs}. As an independent check, the positional C++ solver \texttt{famstates.cpp}, which uses its own implementation of \cref{prop:spiderrec}, agrees with \cref{tab:blocked23} for all $r\le3000$. It also agrees with \texttt{certify\_gen.py} on every family, blocked and free, of five further pairs for $r\le 100$ or $120$.

\section{The family \texorpdfstring{$\Sp(1,2j,z)$}{S(1,2j,z)}}\label{sec:onetwoj}

The family $\Sp(1,2j,z)$ refused to follow \cref{conj:eventual}. It turns out to obey a different and prettier rule, governed by the carries in binary addition. We find an exact formula for $\gv(\Sp(1,2j,z))$, proved entirely by hand. It explains the periods $8,16,32$ of \cref{tab:families}, and it shows that each such family contains infinitely many counterexamples to the parity conjecture.

Throughout, $\oplus$ is nim-addition, $\mathbin{\&}$ is bitwise AND, and $[P]\in\{0,1\}$ is the truth value of a statement $P$. For $\delta\in\{\IN,\OUT\}$ let $\bar\delta$ be the other direction. For $j\ge1$ put
\[
  S_j=\{\,b\ge 0 : b \mathbin{\&} (j-1)=0\,\}.
\]
Thus $b\in S_j$ if and only if the binary addition $b+(j-1)$ has no carry, i.e.\ $b+(j-1)=b\oplus(j-1)$; equivalently, $\binom{b+j-1}{b}$ is odd. Membership in $S_j$ depends only on $b \bmod 2^{\lceil\log_2 j\rceil}$.

\begin{theorem}\label{thm:onetwoj}
\leavevmode
\begin{enumerate}[label=\textup{(\alph*)}]
\item $\gv(\Sp(1,2,z))=(z+1)\bmod 2$ for all $z\ge1$.
\item Let $j\ge 2$ and $z\ge 2j+2$. Write $z-2j+2=4b+r$ with $0\le r\le 3$. Then
\[
  \gv(\Sp(1,2j,z))=\begin{cases} [\,r=2\,] & \text{if } b\in S_j,\\ z\bmod 2 & \text{if } b\notin S_j.\end{cases}
\]
Equivalently: for even $z$, $\gv=1$ if and only if $z\equiv 2j\pmod 4$ and $\tfrac{z-2j}{4}\in S_j$; for odd $z$, $\gv=1$ if and only if $\lfloor\tfrac{z-2j+2}{4}\rfloor\notin S_j$.
\end{enumerate}
\end{theorem}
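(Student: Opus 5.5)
We follow the long-leg framework of \cref{sec:s23}, with $\kappa_0=(\sn;\{1^\free,(2j)^\free\})$, and we carry out by hand what the certificate program does for the other pairs. Part (a) is the easy one. The configurations reachable from $(\sn;\{1^\free,2^\free\})$ are few, and we expect every blocked family there to be either of the form $\pi$ or $\bar\pi$ or one of the universal families of \cref{lem:universal}. We then read off the options of $\Sp(1,2,z)$ from \cref{prop:spiderrec} and check that they all have the parity of $z$, plus a zero option when $z$ is even. This is the same parity bookkeeping as in \cref{lem:F1}.

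For (b) we first classify the configurations reachable from $\kappa_0$. The leg $1^\free$ can only become $1^\IN$, $1^\OUT$, or be absorbed into a centre move. The long short leg becomes $h^\psi$ with $1\le h\le 2j$. We determine the blocked families $V^\phi_\kappa(r)$ for every such configuration by induction on $r$ and on size, using \cref{lem:famrec}. The expectation, guided by \cref{tab:blocked23}, is as follows.
\begin{itemize}
\item Most configurations, such as $(\sn;\{1^\IN,h^\psi\})$ and $(\si;\{h^\psi\})$, have families of type P, with values in $\{0,1\}$ or $\{2,3\}$ depending on the parities of $h$ and $r$.
\item The configurations still carrying the free leg of length $h$, namely $(\sn;\{1^\delta,h^\free\})$ and $\kappa_0$ itself, are of type L.
\item The configurations $(\si;\{1^\free\})$ and $(\so;\{1^\free\})$ fall under \cref{lem:F1}.
\end{itemize}
The key computation is $V^\phi_{(\sn;\{1^\delta,h^\free\})}(r)$. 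Cutting the free leg of length $h$ at depth $j'$ produces a nim heap of size $h-j'$. The mex over these heaps against the long family is where sums like $b+(j-1)$ arise. We aim to prove a closed form of the shape $V=r+h+\varepsilon$ on one residue class mod $4$, with a correction governed by whether the relevant quotient lies in $S_{\lceil h/2\rceil}$.

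Once the blocked families of $\kappa_0$ are known in closed form, we evaluate the free family $V^\free_{\kappa_0}(z)$ exactly as in the proof of \cref{thm:s23}. The options come in three kinds:
\begin{itemize}
\item short-move options, which we bound away from $\{0,1\}$, or show to equal $1$ in the special residue class, via explicit cuts as in \cref{lem:lowerbounds};
\item the $j=1$ move on the long leg, which is large;
\item long-leg cuts $t\oplus V^\delta_{\kappa_0}(z-1-t)$.
\end{itemize}
For the long-leg cuts, with $V^\delta_{\kappa_0}(i)\approx i+2j-1+\text{(correction)}$, a zero option needs $t=V^\delta_{\kappa_0}(z-1-t)$. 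Solving this gives $z-2j+2\equiv 0,1\pmod 4$ up to the correction. The absence of a carry, $b\mathbin{\&}(j-1)=0$, decides whether the needed value $t$ is actually attained, because the offset of the family near $2b$ is $b\oplus(j-1)$ versus $b+(j-1)$. Separating the cases $r\in\{0,1,2,3\}$ and $b\in S_j$ versus $b\notin S_j$ gives the stated values. The threshold $z\ge 2j+2$ is what guarantees that the attached part after the decisive cut lies in the closed-form range.

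The main obstacle is the exact closed form for the type-L families of $\kappa_0$ and of $(\sn;\{1^\delta,h^\free\})$. These families are not arithmetic-periodic with a uniform offset. Their offsets depend on $b$ through the carry condition, so \cref{lem:propL} does not apply directly. My first attempt would be to guess the formula from computed data, with $V^\phi_{\kappa_0}(r)$ written as $r$ plus an explicit function of $(r\bmod4, [b\in S_j])$. I would then verify it by strong induction on $r$, showing that the options cover an initial interval and computing the first gap directly. The nim-sum identity $a\oplus(j-1)=a+(j-1)-2(a\mathbin{\&}(j-1))$ would be the tool for locating that gap.
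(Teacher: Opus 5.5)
\textbf{Verdict: there is a genuine gap.} Your framework is, in substance, the paper's. \cref{lem:L1,lem:L2,lem:L3,lem:L4,lem:L5} are exactly the blocked families of the configurations reachable from $\kappa_0=(\sn;\{1^{\free},(2j)^{\free}\})$, each proved by induction over the options of \cref{prop:spiderrec}, and the theorem then follows from a mex over the long-leg cuts.

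\textbf{Your map of the reachable configurations is wrong.} The leg $1^{\free}$ has a single markable edge, at the centre. So every move on it detaches it and changes $s(c)$; it never becomes $1^{\IN}$ or $1^{\OUT}$. Conversely, every cut of the $2j$-leg blocks its far end, so the only free legs that ever occur are the original $1^{\free}$ and $(2j)^{\free}$. Hence your ``key'' configuration $(\sn;\{1^{\delta},h^{\free}\})$ does not occur for $h\ge2$. For $h=1$ it is the bounded configuration $(\sn;\{1^{\free},1^{\delta}\})$. The configurations that do occur are:
\begin{itemize}
\item $(\sn;\{1^{\free},h^{\delta}\})$, with values in $\{0,1\}$ apart from a single value $3$ (\cref{lem:L4});
\item $(\si;\{h^{\delta}\})$, bounded except for the universal family $r\oplus1$ of $(\si;\{1^{\OUT}\})$ (\cref{lem:L1});
\item $(\si;\{(2j)^{\free}\})$, with values in $\{2j-2,2j-1\}$ for $r\ge2$ (\cref{lem:L3});
\item the two genuinely linear configurations $(\si;\{1^{\free}\})$ and $\kappa_0$, together with reversals.
\end{itemize}
This error sends you looking for the carries in the wrong place.

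\textbf{The decisive step is missing.} You leave the closed form of $V^\phi_{\kappa_0}(i)=\Lambda(i)$ to ``guess from data and verify''. The shape you propose, a function of $i\bmod 4$ and $[b\in S_j]$, is not the right one. The paper proves $\Lambda(i)=i+2j-2+\varepsilon_j(i)$ for all $i\ge1$, independently of $\phi$, where $\varepsilon_j(i)=-1$ if $i$ is odd and $\frac{i-1}{2}\in S_j$, $\varepsilon_j(i)=+1$ if $i$ is even and $\frac i2\in S_j$, and $\varepsilon_j(i)=0$ otherwise.

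This first needs the blocked family $\gamma_\delta(i)=\Gamma(\si;(1,\free),(i,\delta))$ of $(\si;\{1^{\free}\})$, which you never compute; \cref{lem:F1} only uses its parity. By \cref{lem:L2}, $\gamma_\delta(i)=i+1$ for even $i$, and $\{\gamma_{\IN}(i),\gamma_{\OUT}(i)\}=\{i-1,i+1\}$ for odd $i$.

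The carry enters through one option of $\Lambda(i)$: cutting the $2j$-leg at its first edge detaches the heap $2j-1$ and leaves $\gamma_\delta(i)$. For $i=2c$ this option is $(2j-1)\oplus(2c+1)=2((j-1)\oplus c)$. It equals the first uncovered value $i+2j-2=2(c+j-1)$ exactly when $c\mathbin{\&}(j-1)=0$. Everything below that value is supplied by two sources:
\begin{itemize}
\item the deeper cuts of the $2j$-leg, giving pairs $\{0,1\}$ from \cref{lem:L4} that cover $[0,2j-1]$;
\item the cuts of the blocked leg, giving pairs $\{2\beta_t,2\beta_t+1\}$ with $\beta_t=\lfloor t/2\rfloor+j-1$, by induction.
\end{itemize}

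\textbf{Two smaller points.}
\begin{itemize}
\item For $z\ge 2j+2$, no short-move option of $\Sp(1,2j,z)$ equals $1$; they are all at least $2$. Forcing this is mainly what the threshold is for, since the formula for $\Lambda$ holds for every $i\ge1$.
\item In part (a), the options from moving the $1$-leg and from cutting the $2$-leg at its second edge are not controlled by parity. They have to be pushed above $1$ by explicit cuts.
\end{itemize}
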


In words: cut the values $z=2j-2,2j-1,\dots$ into consecutive blocks of four, numbered $b=0,1,2,\dots$. The $b$-th block of Grundy values reads $0,0,1,0$ if $b\in S_j$, and $0,1,0,1$ otherwise. The theorem proves this for the blocks with $b\ge1$. By \cref{rem:onetwojwindow} it also holds for $b=0$ in all computed cases with $j\ge3$.

\begin{corollary}\label{cor:onetwojperiod}
Let $j\ge2$ and $N=2^{\lceil \log_2 j\rceil}$ (the least power of $2$ that is $\ge j$). On $z\ge 2j+2$ the sequence $z\mapsto\gv(\Sp(1,2j,z))$ is periodic with least period $4N=2^{\lceil\log_2 j\rceil+2}$. The spider $\Sp(1,2j,z)$, $z\ge 2j+2$, satisfies the parity conjecture if and only if $z\equiv 2j-1,\,2j,\,2j+1 \pmod 4$ and $b\in S_j$ (with $b$ as in \cref{thm:onetwoj}). The density of violations is $1-\tfrac34\cdot 2^{-s(j-1)}$, where $s(n)$ is the binary digit sum of $n$.
\end{corollary}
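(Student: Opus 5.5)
The corollary follows directly from \cref{thm:onetwoj}(b) by bookkeeping, and I would prove its three claims in order: the period, the characterisation of the spiders that satisfy the conjecture, and the density.

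\emph{Period.} Membership $b\in S_j$ depends only on $b\bmod N$, because $j-1<N$. So $z\mapsto(b\bmod N,\,r)$ is periodic with period $4N$, where $z-2j+2=4b+r$. Since $4N$ is even, $z\bmod 2$ is also periodic, and by \cref{thm:onetwoj}(b) $4N$ is a period of $\gv$ on $z\ge 2j+2$.

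For minimality, note that a block reading $0,0,1,0$ differs from a block reading $0,1,0,1$. Moreover, a shift by a non-multiple of $4$ cannot preserve the sequence. Within a block of the first kind the value $1$ sits at position $r=2$ after a $0$ at $r=1$, while blocks of the second kind have $1$s at both odd positions. A short case check on the possible shifts $1,2,3\pmod 4$ rules these out, using that both block types occur: $b=0\in S_j$, and $b=N-1\notin S_j$ because $j\ge2$ gives $(N-1)\mathbin{\&}(j-1)=j-1\ne0$.

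So any period is $4p$, and $p$ must be a period of the indicator of $S_j$ on $b$. This indicator is periodic with period $N$. Its least period is exactly $N$, because its zero set is determined by the top bit of $j-1$ and the indicator is not invariant under any shift $p<N$. To see this, take the highest bit $2^m$ of $j-1$; here $N=2^{m+1}$ and $j-1\ge 2^m$. Then $b\in S_j$ forces bit $m$ of $b$ to be $0$. Any shift $p$ with $0<p<N$ maps some $b\in S_j$ to a $b+p\pmod N$ with bit $m$ set, or I would argue via the smallest nonzero $b$ not in $S_j$. This gives least period $4N$. This minimality bookkeeping is the only delicate step, and I would handle it by comparing the indicator at $b=0$ and at $b=p$ for the smallest offending shift.

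\emph{Parity characterisation.} We have $\m=2j+1+z\equiv z+1\pmod2$. The conjecture holds exactly when $\gv=(z+1)\bmod2$, since $\gv\in\{0,1\}$ here.

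If $b\notin S_j$, then $\gv=z\bmod 2$, so the conjecture always fails. If $b\in S_j$, then $\gv=[r=2]$. Since $z\equiv r+2j-2\equiv r\pmod 2$, we have $(z+1)\bmod 2=1-(r\bmod 2)$. This equals $[r=2]$ exactly for $r\in\{1,2,3\}$. These values of $r$ correspond to $z\equiv 2j-1,2j,2j+1\pmod4$.

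\emph{Density.} The density of satisfying $z$ is $\tfrac34$ times the density of $S_j$. The density of $S_j$ is $2^{-s(j-1)}$, because each of the $s(j-1)$ bits set in $j-1$ must be $0$ in $b$ and all other bits are free. Hence the density of violations is $1-\tfrac34\,2^{-s(j-1)}$.
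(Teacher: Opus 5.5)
Your parity characterisation and density count are correct and agree with the paper. So does your overall reduction: first force $4\mid p$, then reduce to a period of $\chi(b)=[b\in S_j]$. The case check for shifts $p\not\equiv 0\pmod 4$ is only asserted, but it is easy. Every block has value $0$ at $r=0$. An odd shift sends the two $1$s at $r=1,3$ of a block with $b\notin S_j$ to two even positions differing by $2$, and one of these has $r=0$. A shift $p\equiv 2\pmod 4$ sends the $1$ at $r=2$ of a block with $b\in S_j$ to a position with $r=0$. Use $b=N$ rather than $b=0$ here, since the range $z\ge 2j+2$ means $b\ge 1$.

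The genuine gap is the step you flag as delicate: showing that $\chi$ has no period $p$ with $0<p<N$. The mechanism you state is false. For $j=4$ we have $m=1$ and $S_4\cap[0,4)=\{0\}$, and the shift $p=1$ sends $0$ to $1$, whose bit $m$ is clear. Your fallback, comparing $b=0$ with $b=p$, fails whenever $p\in S_j$. For instance, for $j=3$ and $p=1$ we have $\chi(0)=\chi(1)=1$. Also, the zero set of $\chi$ depends on every set bit of $j-1$, not only the top one.

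The missing idea is that periods combine by gcd. Suppose $p$ and $N$ are periods of $\chi$ on $b\ge 1$. Write $d=\gcd(p,N)=up-vN$ with $u,v\ge 0$. Then $\chi(b+d)=\chi(b+d+vN)=\chi(b+up)=\chi(b)$, so $d$ is also a period. Since $d$ is a power of $2$ dividing $N$, if $N\nmid p$ then $d\mid N/2$, and so $N/2$ is a period. But $N/2=2^m$ is the top bit of $j-1$, so $\chi(N)=1\ne 0=\chi(3N/2)$. This single comparison is exactly how the paper obtains $N\mid p$, and with it your argument is complete.
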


\begin{proof}
Put $\chi(b)=[b\in S_j]$. Since $N/2<j\le N$, we have $N/2\le j-1<N$, so $\chi$ depends only on $b\bmod N$, and $\chi(0)=1$, $\chi(N/2)=0$. Thus $4N$ is a period. The least period of $\chi$ is $N$. Indeed, let $q$ be an eventual period of $\chi$ and $d=\gcd(q,N)$. Then $\chi$ is $d$-periodic: write $d=uq-vN$ with integers $u,v\ge0$; for every $b$ and every large $w$ we get $\chi(b+d)=\chi(b+d+wN)=\chi(b+(w-v)N)=\chi(b)$. Here the middle step applies the eventual $q$-periodicity $u$ times. So $d$ is a power of $2$ dividing $N$. If $d\le N/2$ then $\chi(N/2)=\chi(0)=1$, a contradiction. Hence $N\mid q$.

Now let $p$ be a period of $z\mapsto\gv(\Sp(1,2j,z))$ on $z\ge z_0$. By \cref{thm:onetwoj}, an odd $z$ has value $1$ exactly when $\chi(b)=0$, and an even $z$ has value $1$ exactly when $r=2$ and $\chi(b)=1$.

Suppose $p$ is odd. Take a large block $b$ with $\chi(b)=0$. Its elements $z_1$ (with $r=1$) and $z_1+2$ (with $r=3$) both have value $1$. So the even numbers $z_1+p$ and $z_1+2+p$ have value $1$, hence both have $r=2$. This is absurd, since they differ by $2$.

So $p$ is even. If $p\equiv 2\pmod 4$, comparing odd $z$ with $r=1$ and with $r=3$ gives $\chi(b)=\chi(b+\frac{p-2}{4})=\chi(b+\frac{p+2}{4})$ for all large $b$. The two shifts differ by $1$, so $\chi(b'+1)=\chi(b')$ for all large $b'$, i.e.\ $\chi$ is eventually constant, which it is not. (This also covers $p=2$.) Hence $4\mid p$, and odd $z$ give $\chi(b)=\chi(b+p/4)$ for all large $b$. So $N\mid p/4$.

For the parity statement note $\m(\Sp(1,2j,z))=2j+1+z\equiv z+1$, and $\gv\in\{0,1\}$, so the conjecture holds if and only if $\gv=(z+1)\bmod 2$. By \cref{thm:onetwoj} this happens exactly for $b\in S_j$, $r\in\{1,2,3\}$. Since $|S_j\cap[0,N)|=N\,2^{-s(j-1)}$, the density of such $z$ is $\tfrac34 2^{-s(j-1)}$.
\end{proof}

\subsection{Five lemmas}
The proof works its way up through the spider states of $\Sp(1,2j,z)$: first states whose centre already carries an arrow (\cref{lem:L1,lem:L2,lem:L3}), then states with an unmarked centre (\cref{lem:L4,lem:L5}). The binary carries make their entrance only in \cref{lem:L5}. We use \cref{thm:runs} in the following form (compare \eqref{eq:beta}). For $h\ge1$ and $\phi\in\{\IN,\OUT\}$,
\begin{equation}\label{eq:R}
  \rho(h;\delta,\phi)\in\{0,1\},\qquad \rho(h;\delta,\phi)\equiv h+[\delta\ne\phi]\pmod 2,
\end{equation}
so $\{\rho(h;\IN,\phi),\rho(h;\OUT,\phi)\}=\{0,1\}$; the congruence also holds for $h=0$, $\delta=\phi$. Moreover $\rho(h;\delta,\free)=h$, $\rho_\free(h;\free)=h\bmod 2$ and $\rho_\free(h;\phi)=h$. By \cref{lem:reversal}, $\Gamma(\so;L)=\Gamma(\si;\bar L)$, where $\bar L$ reverses every far end; in particular $\Gamma(\so;L)=\Gamma(\si;L)$ when all far ends in $L$ are free. We also use the elementary identity
\begin{equation}\label{eq:pairs}
 \textstyle\bigcup_{s=0}^{n}\{s,s\oplus1\}=\{0,1,\dots,2\lfloor n/2\rfloor+1\}\qquad(n\ge-1).
\end{equation}
In all proofs, ``options'' are those listed in \cref{prop:spiderrec}, together with the degenerate cases (N1)--(N3); a move on the $j$-th edge of a leg $(h,\phi)$ in direction $\delta$ requires $\delta=\phi$ if $j=h$ and $\phi\neq\free$. We write $O$ for the set of option values of the state under consideration, and we omit the braces around multisets, writing for instance $\Gamma(\si;(k,\alpha),(i,\beta))$ for $\Gamma(\si;\{(k,\alpha),(i,\beta)\})$.

\begin{lemma}\label{lem:L1}
For $k,i\ge1$ and $\alpha,\beta\in\{\IN,\OUT\}$,
\[
\Gamma(\si;(k,\alpha),(i,\beta))=\begin{cases}
 i\oplus1 & \text{if } (k,\alpha)=(1,\OUT),\\
 k\oplus1 & \text{if } (i,\beta)=(1,\OUT),\\
 (k+i+[\alpha=\beta])\bmod 2 & \text{otherwise}.
\end{cases}
\]
\end{lemma}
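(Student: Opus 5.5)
Our plan is strong induction on $k+i$, applying \cref{prop:spiderrec} to the state $(\si;(k,\alpha),(i,\beta))$. This state is not degenerate, since $s=\si$ and $|L|=2$. We first list its options. A move on the $j$-th edge of the leg $(k,\alpha)$ in direction $\delta$, with $j\ge2$, has value $\rho(k-j;\delta,\alpha)\oplus\Gamma(\si;(j-1,\delta),(i,\beta))$; when $j=k$ we need $\delta=\alpha$. A move on the first edge has value $\rho(k-1;\delta,\alpha)\oplus\Gamma(\si\cdot\delta;(i,\beta))$. For $\delta=\IN$ the centre stays $\si$, and (N2) gives $\rho(i;\OUT,\beta)\in\{0,1\}$. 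For $\delta=\OUT$ the centre becomes $\sff$, and (N1) gives $\rho_\free(i;\beta)=B_i=i$. The moves on the other leg are symmetric.

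We treat the exceptional case $(k,\alpha)=(1,\OUT)$ directly, and note it is the computation in \cref{lem:universal} for $\kappa_2$ with $r=i$. The only move on the short leg is $(1,\OUT)$, which gives the option $i$. The moves on the long leg give, by induction, the values $\Gamma(\si;(1,\OUT),(j-1,\delta))=(j-1)\oplus1$ nim-added with a run bit in $\{0,1\}$. The first-edge moves give $\rho(i-1;\delta,\beta)\oplus1$, which covers $\{0,1\}$ when $i\ge2$. So the same pairing argument shows that the option set is $[0,i]\setminus\{i\oplus1\}$ together with possibly larger values, and hence the mex is $i\oplus1$. The case $(i,\beta)=(1,\OUT)$ follows by symmetry. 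When both legs are $(1,\OUT)$, the two formulas agree, since both give $0$.

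In the generic case we show that every option is congruent to $k+i+[\alpha=\beta]+1\pmod 2$. Then $\Gamma$, which lies in $\{0,1\}$ exactly when some option has this parity, equals the claimed bit; we also need at least one option, which is clear. For options whose successor is again generic, we use \eqref{eq:R}: the run has parity $k-j+[\delta\ne\alpha]$, and the successor has parity $j-1+i+[\delta=\beta]$. Their sum is $k+i+1+[\delta\ne\alpha]+[\delta=\beta]$, and $[\delta\ne\alpha]+[\delta=\beta]\equiv[\alpha=\beta]+1\pmod 2$ up to the right shift. We will check this by the identity $[\delta\ne\alpha]+[\delta=\beta]\equiv 1+[\alpha=\beta]$, which holds for all four choices.

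The first-edge moves work the same way. For $\delta=\IN$ we get $\rho(k-1;\IN,\alpha)\oplus\rho(i;\OUT,\beta)$, whose parity is $k-1+[\alpha=\OUT]+i+[\beta=\IN]$, and this matches. For $\delta=\OUT$ the value is $\rho(k-1;\OUT,\alpha)\oplus i$, whose parity is $k-1+[\alpha=\IN]+i$. This matches only when $\beta=\OUT$, so when $\beta=\IN$ we must find what fixes the parity. Unless I have mis-tracked the (N1) far end, this is the step we expect to be the main obstacle.

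The other delicate point is a successor that falls into an exceptional case. This happens when $j=2$ with $\delta=\OUT$, which yields $(1,\OUT)$ and the value $i\oplus1$, a non-bit. These values are at least $2$ or have controlled parity, so they must be shown not to disturb the mex. We will handle them by showing that any option outside $\{0,1\}$ is irrelevant, and that the options in $\{0,1\}$ all have the stated parity. Concretely, we will recheck these two exceptional sources, namely the $\sff$ branch and the $(1,\OUT)$ successors, against the parity claim, and split further into the cases $\beta=\IN$ and $\beta=\OUT$ if needed.
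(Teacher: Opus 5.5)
Your plan is the paper's: induction on $k+i$, the exceptional case by the pairing argument, and the generic case by parity. (The exceptional case is exactly \cref{lem:universal} for $\kappa_2$ with $r=i$, so you may simply cite it.) But the generic case, as you set it up, has a gap at exactly the points you leave open.

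First, the claim that \emph{every} option is $\equiv k+i+[\alpha=\beta]+1$ is false. In $(\si;(2,\IN),(2,\OUT))$, orienting the first edge of $(2,\IN)$ as $\OUT$ gives $D_1\oplus B_2=2$, which is even, while $k+i+[\alpha=\beta]+1$ is odd. What is true, and what the paper proves, is that every option lying in $\{0,1\}$ equals $1-T$, where $T=(k+i+[\alpha=\beta])\bmod 2$. Once you restrict to such bit options, ``at least one option exists'' no longer closes the argument: for $T=1$ you need an option equal to $0$. The paper supplies it with the inward move on the first edge of $(k,\alpha)$, of value $\rho(k-1;\IN,\alpha)\oplus\rho(i;\OUT,\beta)$. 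This move is legal because $(k,\alpha)\ne(1,\OUT)$, it is a nim-sum of two bits, and its parity is the one you computed.

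Second, the step you call the main obstacle is misdiagnosed. Its resolution is the generic-case hypothesis $(i,\beta)\ne(1,\OUT)$. The option in which the first edge of $(k,\alpha)$ is oriented $\OUT$ (so the centre becomes $\sff$) is $\rho(k-1;\OUT,\alpha)\oplus i$. Its parity $k-1+i+[\alpha=\IN]$ agrees with $T+1$ exactly when $\beta=\IN$, not when $\beta=\OUT$. This never matters: the option is at least $2$ whenever $i\ge2$, and $i=1$ forces $\beta=\IN$.

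The same dichotomy disposes of the successors equal to $(1,\OUT)$. Their value $\rho(k-2;\OUT,\alpha)\oplus i\oplus1$ is at least $2$ if $i\ge2$. If $i=1$, then $\beta=\IN$, and the value $\rho(k-2;\OUT,\alpha)$ has parity $k+[\alpha=\IN]\equiv T+1$.

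Finally, the identity you need for the remaining moves with $j\ge2$ is $[\delta\ne\alpha]+[\delta=\beta]\equiv[\alpha=\beta]\pmod 2$. The version with $1+[\alpha=\beta]$ that you state fails for every choice of $\alpha,\beta,\delta$, and it would give parity $T$ instead of $T+1$. With these repairs your argument becomes the paper's proof.
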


\begin{proof}
Induction on $k+i$. A move on the leg $(k,\alpha)$ at its $j$-th edge in direction $\delta$ has value
\begin{align*}
&\text{(O1) } \rho(k-j;\delta,\alpha)\oplus\Gamma(\si;(j-1,\delta),(i,\beta))\quad (j\ge2);\\
&\text{(O2) } \rho(k-1;\IN,\alpha)\oplus\rho(i;\OUT,\beta);\qquad
\text{(O3) } \rho(k-1;\OUT,\alpha)\oplus i,
\end{align*}
where (O2) and (O3) are the moves at $j=1$ with $\delta=\IN$ (the centre keeps status $\si$ and has one leg left, (N2)) and $\delta=\OUT$ (the centre becomes $\sff$, (N1)). (O2) is unavailable exactly when $(k,\alpha)=(1,\OUT)$ and (O3) exactly when $(k,\alpha)=(1,\IN)$. Moves on $(i,\beta)$ are symmetric.

\emph{Case A: $(k,\alpha)=(1,\OUT)$.} The only move on this leg is (O3), of value $i$. On the other leg, $j\ge2$ gives $\rho(i-j;\delta,\beta)\oplus(j-1)\oplus1$ by induction, and $j=1$ gives $\rho(i-1;\delta,\beta)\oplus1$ for either direction, because $\rho(1;\OUT,\OUT)=\rho_\free(1;\OUT)=1$. If $i=1$, then only $\delta=\beta$ is allowed and $O=\{1\}$, so the value is $0=1\oplus1$. If $i\ge2$, then by \eqref{eq:R} the moves at $1\le j\le i-1$ give $\bigcup_{s=0}^{i-2}\{s,s\oplus1\}$, the move at $j=i$ gives $(i-1)\oplus1$, and we also have $i$. By \eqref{eq:pairs}, $O=\{0,\dots,i\}$ if $i$ is even and $O=\{0,\dots,i-2\}\cup\{i\}$ if $i$ is odd. In both cases $\mex O=i\oplus1$. The case $(i,\beta)=(1,\OUT)$ is symmetric.

\emph{Case B: neither leg is $(1,\OUT)$.} Let $T=(k+i+[\alpha=\beta])\bmod 2$. We show that every option in $\{0,1\}$ equals $1-T$ and that (O2) on $(k,\alpha)$ is such an option; then $\mex O=T$. We use $[\delta\ne\alpha]+[\delta=\beta]\equiv[\alpha=\beta]$ and $[\alpha=\OUT]+[\beta=\IN]\equiv[\alpha=\beta]$ (mod $2$).
(O2) is available, lies in $\{0,1\}$ and is $\equiv (k-1)+[\alpha=\OUT]+i+[\beta=\IN]\equiv T+1$ by \eqref{eq:R} (if $k=1$ then $\alpha=\IN$ and the first term is $0$).
(O3): if $i\ge2$ its value is at least $2$. If $i=1$, then $\beta=\IN$, and $k\ge2$ because (O3) is unavailable when $(k,\alpha)=(1,\IN)$; and the value is $\rho(k-1;\OUT,\alpha)\oplus1\equiv k+[\alpha=\IN]\equiv T+1$.
(O1) with $(j-1,\delta)=(1,\OUT)$: the value is $\rho(k-2;\OUT,\alpha)\oplus i\oplus1$ by Case A. This is at least $2$ if $i\ge2$. If $i=1$ (so $\beta=\IN$), it is $\rho(k-2;\OUT,\alpha)\equiv k+[\alpha=\IN]\equiv T+1$.
(O1) otherwise: by induction (Case B) both terms lie in $\{0,1\}$, and the sum is $\equiv(k-j+[\delta\neq\alpha])+(j-1+i+[\delta=\beta])\equiv T+1$.
The moves on $(i,\beta)$ are symmetric.
\end{proof}

\begin{lemma}\label{lem:L2}
For $i\ge1$ let $\gamma_\IN(i)=\Gamma(\si;(1,\free),(i,\IN))$ and $\gamma_\OUT(i)=\Gamma(\si;(1,\free),(i,\OUT))$. Then
\[
\gamma_\IN(i)=\begin{cases} i-1& i\equiv3\ (4)\\ i+1&\text{otherwise}\end{cases},\qquad
\gamma_\OUT(i)=\begin{cases} i-1& i\equiv1\ (4)\\ i+1&\text{otherwise}\end{cases},
\]
and $\Gamma(\si;(1,\free),(i,\free))=(i+1)\bmod 2$.
In particular $\{\gamma_\IN(i),\gamma_\OUT(i)\}=\{i+1\}$ for even $i$ and $=\{i-1,i+1\}$ for odd $i$; $\gamma_\delta(i)\equiv i+1\pmod 2$; and $\gamma_\delta(i)\ge2$ unless $(i,\delta)=(1,\OUT)$.
\end{lemma}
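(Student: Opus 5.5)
We argue by strong induction on $i$, treating $\gamma_\IN$ and $\gamma_\OUT$ simultaneously and proving the free case $\Gamma(\si;(1,\free),(i,\free))$ last. Throughout we list options as in \cref{prop:spiderrec}, with (N1) and (N2) for the moves that leave the centre with one attached leg.

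First consider the state $(\si;(1,\free),(i,\beta))$ with $\beta\in\{\IN,\OUT\}$. Its options are of four kinds.
\begin{itemize}[leftmargin=*]
\item The leg $(1,\free)$ oriented $\IN$ gives $\rho(0;\IN,\free)\oplus\rho(i;\OUT,\beta)=\rho(i;\OUT,\beta)\in\{0,1\}$, with parity $i+[\beta=\IN]$ by \eqref{eq:R}.
\item The leg $(1,\free)$ oriented $\OUT$ makes the centre $\sff$. This gives $\rho_\free(i;\beta)=B_i=i$.
\item The long leg at $j=1$ in direction $\delta$ gives $\rho(i-1;\delta,\beta)\oplus W_\delta$. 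Here $W_\IN=\Gamma(\si;(1,\free))=\rho(1;\OUT,\free)=1$ and $W_\OUT=\Gamma(\sff;(1,\free))=A_1=1$. If $i=1$, only $\delta=\beta$ is allowed.
\item The long leg at $j\ge2$ gives $\rho(i-j;\delta,\beta)\oplus\gamma_\delta(j-1)$, with $\delta=\beta$ forced when $j=i$.
\end{itemize}
By induction, $\gamma_\delta(j-1)\equiv j\pmod 2$, and the multiset $\{\gamma_\IN(m),\gamma_\OUT(m)\}$ is known for $m<i$. For $j<i$ both directions are allowed. Since $\rho(i-j;\IN,\beta)$ and $\rho(i-j;\OUT,\beta)$ are $0$ and $1$ in some order, determined by $i-j+[\delta\ne\beta]$, each $j$ contributes two explicit values among $\gamma_\delta(j-1)$ and $\gamma_\delta(j-1)\oplus1$.

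The plan is to show that these options cover $[0,\gamma_\beta(i))$ and miss $\gamma_\beta(i)$. The small values should come from the $j=1$ moves, which give $\{0,1\}$ when $i\ge2$, together with the first move. The value $i$ comes from the $\OUT$ move. The range $[2,i-1]$ should be filled pairwise by the cuts with $2\le j\le i-1$, in the spirit of \eqref{eq:pairs}. The forced move at $j=i$ contributes $\rho(0;\beta,\beta)\oplus\gamma_\beta(i-1)=\gamma_\beta(i-1)$.

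The real work is deciding whether $i+1$ (respectively $i-1$) is attained, and this is where the period $4$ appears. I would split into the four residues of $i$ mod $4$ and the two values of $\beta$. In each case I would track which of $\gamma_\delta(j-1)$, $\gamma_\delta(j-1)\oplus1$ actually occurs, using the explicit sign $[\delta\ne\beta]$. The key check is this: for $i\equiv3\pmod4$ with $\beta=\IN$, and for $i\equiv1\pmod 4$ with $\beta=\OUT$, the value $i-1$ must be missed. This requires verifying that the near-top cuts $j=i-1$ and $j=i$, with the allowed directions, yield $i-2$, $i$ and $i+1$-type values but not $i-1$. In the other cases some cut must produce $i-1$, and nothing may produce $i+1$; the $xor$ with $1$ must not reach $i+1$, which I would check from parity and the formula for $\gamma$. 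This case bookkeeping is the main obstacle. I would first verify the base cases $i=1,2,3,4$ by hand to calibrate which cuts realise which values. For the ``in particular'' statements, $\gamma_\delta(i)\ge2$ except at $(1,\OUT)$ follows once $\gamma_\OUT(1)=0$ is checked directly.

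Finally, for $(\si;(1,\free),(i,\free))$, every option has parity $i$.
\begin{itemize}[leftmargin=*]
\item The first leg oriented $\IN$ gives $\rho(i;\OUT,\free)=i$.
\item The first leg oriented $\OUT$ gives $A_i\equiv i$.
\item The long leg at $j=1$ gives $(i-1)\oplus1\equiv i$.
\item The cuts give $t\oplus\gamma_\delta(i-1-t)\equiv t+(i-t)\equiv i$ by the parity statement just proved.
\end{itemize}
When $i$ is even, $0=A_i$ occurs and $1$ cannot, so the value is $1$. When $i$ is odd, $0$ is never an option, so the value is $0$. In both cases the value is $(i+1)\bmod2$. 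This is the same argument as \cref{lem:F1}.
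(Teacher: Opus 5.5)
Your option list matches the paper's, and so does your treatment of the free far end. In fact the free case is exactly \cref{lem:F1} applied to $\sigma=(\si;\{1^\free\})$, since $\gamma_\delta(i)-i=\pm1$ is odd. The gap is the blocked case $\beta\in\{\IN,\OUT\}$, which is the real content of the lemma. You describe what the inductive step should show but do not carry it out, and several of your stated expectations are inaccurate.
\begin{itemize}
\item No option ever reaches $i+1$, so there are no ``$i+1$-type values'' to rule out.
\item The pairing in the spirit of \eqref{eq:pairs} works only for even $t=j-1$. For odd $t$ we have $\gamma_\IN(t)\neq\gamma_\OUT(t)$, so the two options need not form a pair $\{g,g\oplus1\}$. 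For odd $i$ the pairing can only fill $[2,i-2]$, not $[2,i-1]$.
\item In the cases where $i-1$ is missed, the near-top cuts give $i-3$ and $i$ (twice), not $i-2$.
\end{itemize}

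Three observations close the step, with no eight-way case split. Handle $i=1$ directly: $O=\{0,1\}$ for $\IN$ and $O=\{1\}$ for $\OUT$.
\begin{enumerate}
\item Every option is at most $i$. A cut with $t\le i-2$ gives $\gamma_\delta(t)\oplus b\le t+2$. The forced cut $j=i$ has bit $\rho(0;\beta,\beta)=0$ and gives $\gamma_\beta(i-1)\le i$.
\item Fill using even $t$ only. There $\gamma_\IN(t)=\gamma_\OUT(t)=t+1$ while the two bits are $0$ and $1$, so $\{t,t+1\}\subseteq O$. Add $\{0,1\}$ from the first edge of the long leg and $i$ from the outward short move. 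This gives $[0,i]\subseteq O$ for even $i$, and $[0,i-2]\cup\{i\}\subseteq O\subseteq[0,i]$ for odd $i\ge3$.
\item For odd $i$, only the cut $t=i-2$ can produce $i-1$. Cuts with $t\le i-4$ give at most $i-2$, the cut $t=i-3$ gives $i-3$ and $i-2$, the forced cut gives $\gamma_\beta(i-1)=i$, and the remaining options are at most $1$ or equal to $i$. At $t=i-2$ the detached run has one edge, so the bit is $1$ for $\delta=\beta$ and $0$ for $\delta=\bar\beta$. Since $\gamma_\delta(i-2)\in\{i-3,i-1\}$ is even, the two options are $\gamma_\beta(i-2)+1\in\{i-2,i\}$ and $\gamma_{\bar\beta}(i-2)\in\{i-3,i-1\}$.
\end{enumerate}
Hence $i-1\in O$ if and only if $\gamma_{\bar\beta}(i-2)=i-1$. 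By induction this holds exactly for $\beta=\OUT$ when $i\equiv3\pmod 4$, and for $\beta=\IN$ when $i\equiv1\pmod 4$, which yields both formulas.
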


\begin{proof}
Let $\beta\in\{\IN,\OUT,\free\}$. The options of $\Gamma(\si;(1,\free),(i,\beta))$ are:
(X1) $\rho(i;\OUT,\beta)$ (the edge of the short leg, inwards);
(X2) $\rho_\free(i;\beta)$ (the same edge outwards; the centre becomes $\sff$);
(Z1) $\rho(i-1;\delta,\beta)\oplus1$ (the first edge of the long leg, either direction, since $\rho(1;\OUT,\free)=\rho_\free(1;\free)=1$);
(Z2) $\rho(i-j;\delta,\beta)\oplus\gamma_\delta(j-1)$ for $2\le j\le i$.

First let $\beta\in\{\IN,\OUT\}$; we use induction on $i$. For $i=1$, $O=\{0,1\}$ if $\beta=\IN$ and $O=\{1\}$ if $\beta=\OUT$, giving $2$ and $0$. Let $i\ge2$. By (Z1), $\{0,1\}\subseteq O$, and (X2) gives $i\in O$. Every option is at most $i$, because $\gamma_\delta(t)\le t+1$ (for $t=i-1$ the move is at $j=i$, where $\delta=\beta$ is forced and the detached bit is $\rho(0;\beta,\beta)=0$, so the option is $\gamma_\beta(i-1)\le i$). For even $t\le i-2$ we have $\gamma_\IN(t)=\gamma_\OUT(t)=t+1$, and the two bits $\rho(i-1-t;\delta,\beta)$ are $0$ and $1$, so $\{t,t+1\}\subseteq O$. Hence $\{0,\dots,i-1\}\cup\{i\}\subseteq O$ for even $i$, so the value is $i+1$. For odd $i\ge3$ we get $\{0,\dots,i-2\}\cup\{i\}\subseteq O\subseteq\{0,\dots,i\}$, and the value is $i-1$ or $i+1$ according as $i-1\notin O$ or $i-1\in O$. Only the moves with $t=j-1=i-2$ can give $i-1$: for $t\le i-3$ the options are at most $i-2$ (for $t=i-3$, which is even, they are $i-3,i-2$), $\gamma_\beta(i-1)=i$, and (X1),(Z1) are at most $1$. For $t=i-2$ the bit is $1$ for $\delta=\beta$ and $0$ for $\delta=\bar\beta$, so the two options are $\gamma_\beta(i-2)\oplus1$ and $\gamma_{\bar\beta}(i-2)$. If $i\equiv3\pmod 4$ then $\gamma_\IN(i-2)=i-1$ and $\gamma_\OUT(i-2)=i-3$, so $i-1\in O$ if and only if $\beta=\OUT$. If $i\equiv1\pmod4$ then $\gamma_\IN(i-2)=i-3$ and $\gamma_\OUT(i-2)=i-1$, so $i-1\in O$ if and only if $\beta=\IN$. This is the claim.

For $\beta=\free$, all options are $\equiv i\pmod 2$, because $\gamma_\delta(t)\equiv t+1$. If $i$ is odd, $0\notin O$. If $i$ is even, then $0\in O$ by (X2) and $1\notin O$. So the value is $(i+1)\bmod 2$.
\end{proof}

\begin{lemma}\label{lem:L3}
Let $y\ge2$ be even, $i\ge1$ and $\beta\in\{\IN,\OUT\}$. Then $u_\beta(i):=\Gamma(\si;(y,\free),(i,\beta))$ equals $y+1$ if $(i,\beta)=(1,\IN)$, equals $1$ if $(i,\beta)=(1,\OUT)$, and equals $y-2+\tau$ with $\tau=(i+[\beta=\IN])\bmod 2$ if $i\ge2$. In particular $\{u_\IN(i),u_\OUT(i)\}=\{y-2,y-1\}$ for $i\ge2$.
\end{lemma}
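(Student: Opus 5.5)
We argue by induction on $i$, in the style of \cref{lem:L1,lem:L2}. First we list the options of the state $(\si;(y,\free),(i,\beta))$ using \cref{prop:spiderrec} and (N1)--(N2).

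On the long leg $(y,\free)$, the moves are as follows.
\begin{itemize}[leftmargin=*]
\item (Y1) A move at $j=1$ inwards gives $\rho(y-1;\IN,\free)\oplus\rho(i;\OUT,\beta)=(y-1)\oplus\rho(i;\OUT,\beta)$.
\item (Y2) A move at $j=1$ outwards gives $(y-1)\oplus\rho_\free(i;\beta)=(y-1)\oplus i$.
\item (Y3) A move at $2\le j\le y$ gives $(y-j)\oplus\Gamma(\si;(j-1,\delta),(i,\beta))$, and the second term is known from \cref{lem:L1}.
\end{itemize}
On the leg $(i,\beta)$, the moves are as follows.
\begin{itemize}[leftmargin=*]
\item (I1) A move at $j=1$ inwards, available unless $(i,\beta)=(1,\OUT)$, gives $\rho(i-1;\IN,\beta)\oplus\rho(y;\OUT,\free)=\rho(i-1;\IN,\beta)\oplus y$.
\item (I2) A move at $j=1$ outwards, available unless $(i,\beta)=(1,\IN)$, gives $\rho(i-1;\OUT,\beta)\oplus\rho_\free(y;\free)=\rho(i-1;\OUT,\beta)\oplus 0$, since $y$ is even.
\item (I3) A move at $2\le j\le i$ gives $\rho(i-j;\delta,\beta)\oplus u_\delta(j-1)$, with $\delta=\beta$ forced at $j=i$.
\end{itemize}

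The base cases are $i=1$. For $(1,\OUT)$, the options are (Y1) $=(y-1)\oplus\rho(1;\OUT,\OUT)=(y-1)\oplus1=y-2$ and (Y2) $=y-1\oplus1=y-2$. In (Y3), \cref{lem:L1} gives $\Gamma(\si;(j-1,\delta),(1,\OUT))=(j-1)\oplus1$, so the option is $(y-j)\oplus(j-1)\oplus1$. Since $y$ is even, $(y-j)+(j-1)$ is odd, and this option is even, nonzero or zero. I expect $0\notin O$ and $1\notin O$, hence value $1$. More precisely, an option $1$ would need $(y-j)\oplus(j-1)=0$, which is impossible by parity. So the value is $1$ once we check that $0$ is attained, for instance by (I2) $=\rho(0;\OUT,\OUT)=0$. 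For $(1,\IN)$, (I1) gives $0\oplus y=y$. Then (Y1) gives $(y-1)\oplus\rho(1;\OUT,\IN)=(y-1)\oplus 0=y-1$, and (Y2) gives $y-2$. In (Y3), \cref{lem:L1} "otherwise" gives bits, or $(j-1)\oplus1$ when $(j-1,\delta)=(1,\OUT)$. We then need to show these fill $[0,y-3]$ and never hit $y+1$, using \eqref{eq:pairs} on the pairs $(y-j)\oplus\{0,1\}$. The conclusion is $\mex=y+1$.

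For $i\ge2$, the claim is $O\supseteq[0,y-3]$ together with exactly one of $y-2,y-1$ absent, namely $y-2+\tau$. In (Y3), with $(j-1,\delta)\ne(1,\OUT)$, the Case B bits of \cref{lem:L1} are $(j-1+i+[\delta=\beta])\bmod 2$. As $\delta$ varies they give both bits, so the option pairs $\{y-j,(y-j)\oplus1\}$ for $2\le j\le y$ cover $[0,y-3]$ by \eqref{eq:pairs}, and also $\{y-2,y-3\}$ at $j=2$ only if both directions survive. The value $y-2+\tau$ must be excluded, so the parity of the surviving bits matters here. The other options are handled as follows. (Y1) and (Y2) equal $y-1\oplus\text{bit}$ or $(y-1)\oplus i$. (I1) is $y\oplus\text{bit}\ge y$. (I2) is a bit. (I3) is $\text{bit}\oplus u_\delta(j-1)$, and by induction this lies in $\{y-3,\dots,y+1\}$ or equals $1$, or $y+1\oplus$bit.

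The main obstacle is the careful parity bookkeeping showing that precisely one of $y-2,y-1$ is missing. The candidates producing $y-2$ or $y-1$ are (Y1), the $j=2$ terms of (Y3), (Y2) when $i\le1$, and the (I3) terms with $u_\delta(j-1)\in\{y-2,y-1\}$ or $y+1$ and the small $j$. I would tabulate these by the parity of $i$ and by $\beta$, checking in each case that $y-2+\tau$ never appears and that its partner does. I would check the case $j=2$ in (I3), where $u_\delta(1)\in\{1,y+1\}$, separately.
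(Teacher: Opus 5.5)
\textbf{Verdict: right approach, but the case $i\ge2$ is not proved.} Your option list matches the paper's: your (Y3), (Y1), (Y2), (I1), (I2), (I3) are its (Y1), (Y2), (Y3), (Z1), (Z2), (Z3). Your two base cases are argued as in the paper, apart from the slip ``I expect $0\notin O$'', which contradicts your own correct observation that (I2) gives $0$.

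For $i\ge2$ you correctly reduce the claim to three facts: $[0,y-3]\subseteq O$, $y-1-\tau\in O$, and every option in $\{y-2,y-1\}$ equals $y-1-\tau$. You then stop at ``I would tabulate these''. That last verification is the whole content of the lemma, so as written the case $i\ge2$ is unproved. Your list of candidates is also incomplete. When $y=2$ we have $\{y-2,y-1\}=\{0,1\}$, so the bit option (I2) must be checked too. Also, the $j=2$ pair in (Y3) is $\{y-2,y-1\}$, not $\{y-2,y-3\}$.

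\textbf{The missing idea.} No case table is needed; one congruence does it. By \eqref{eq:R}, $\rho(h;\delta,\beta)\equiv h+[\delta\ne\beta]$, and $[\delta=\IN]+[\delta\ne\beta]\equiv[\beta=\IN]\pmod 2$, so the direction $\delta$ always drops out. The checks are then:
\begin{itemize}
\item (Y1) is $(y-1)\oplus\rho(i;\OUT,\beta)=(y-1)\oplus\tau=y-1-\tau$. This supplies the partner value.
\item In (Y3) at $j=2$, the direction $\delta=\IN$ falls under Case~B of \cref{lem:L1}, with bit $(1+i+[\beta=\IN])\bmod2=1-\tau$. The option is therefore $y-1-\tau$.
\item In (Y3) at $j=2$, the direction $\delta=\OUT$ gives $(y-2)\oplus(i\oplus1)$. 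Since $i\oplus1\ge2$, this is not in $\{y-2,y-1\}$.
\item (Y2) is $(y-1)\oplus i$, which avoids $\{y-2,y-1\}$ because $i\ge2$. (I1) lies in $\{y,y+1\}$.
\item In (I3) with $t=j-1\ge2$, induction gives $u_\delta(t)=y-2+((t+[\delta=\IN])\bmod2)$. The detached bit is $\equiv i-1-t+[\delta\ne\beta]$; this also holds at $t=i-1$, where $\delta=\beta$. Since $y-2$ is even, the option is $y-2+((i-1+[\beta=\IN])\bmod 2)=y-1-\tau$.
\item (I3) with $t=1$, $\delta=\IN$ lies in $\{y,y+1\}$.
\item (I2) and (I3) with $t=1$, $\delta=\OUT$ are bits $\equiv i-1+[\beta=\IN]$, so each equals $1-\tau$. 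When $y=2$ this is again $y-1-\tau$.
\end{itemize}
Combined with $[0,y-3]\subseteq O$ from (Y3) with $j\ge3$, these checks give $u_\beta(i)=y-2+\tau$, which is exactly the paper's argument.
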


\begin{proof}
The options are:
(Y1) $(y-1-k)\oplus\Gamma(\si;(k,\delta),(i,\beta))$ for $1\le k\le y-1$ (the $(k+1)$-th edge of the free leg; \cref{lem:L1} applies);
(Y2) $(y-1)\oplus\rho(i;\OUT,\beta)$;
(Y3) $(y-1)\oplus i$;
(Z1) $\rho(i-1;\IN,\beta)\oplus y$;
(Z2) $\rho(i-1;\OUT,\beta)$ (the centre becomes $\sff$ and $\rho_\free(y;\free)=0$);
(Z3) $\rho(i-1-t;\delta,\beta)\oplus u_\delta(t)$ for $1\le t\le i-1$.

\emph{$i=1$, $\beta=\IN$.} By \cref{lem:L1}, (Y1) with $k\ge2$ gives $\{s,s\oplus1\}$ for $0\le s\le y-3$, and $k=1$ gives $y-1$ ($\delta=\IN$) and $y-2$ ($\delta=\OUT$). (Y2), (Y3) and (Z1) give $y-1$, $y-2$ and $y$. So $O=\{0,\dots,y\}$ and the value is $y+1$.

\emph{$i=1$, $\beta=\OUT$.} (Z2) gives $0$. (Y1) gives $(y-1-k)\oplus k\oplus1$, which is never $1$ because $y-1-k\ne k$; (Y2) and (Y3) give $y-2\ne1$. So the value is $1$.

\emph{$i\ge2$.} We show: (1) $\{0,\dots,y-3\}\subseteq O$; (2) $y-1-\tau\in O$; (3) every option in $\{y-2,y-1\}$ equals $y-1-\tau$. This gives the value $y-2+\tau$.
For (1), use (Y1) with $k\ge2$. By \cref{lem:L1} (third case) the two directions give $0$ and $1$, so these moves give $\bigcup_{s=0}^{y-3}\{s,s\oplus1\}=\{0,\dots,y-3\}$ by \eqref{eq:pairs}. These values are at most $y-3$, which also settles (3) for these moves.
For (2), note that (Y2) equals $(y-1)\oplus\tau=y-1-\tau$, since $\rho(i;\OUT,\beta)\equiv i+[\beta=\IN]$.
For (3): (Y1) with $k=1$, $\delta=\IN$ gives $(y-2)\oplus(1-\tau)=y-1-\tau$. With $k=1$, $\delta=\OUT$ it gives $(y-2)\oplus i\oplus1\notin\{y-2,y-1\}$. (Y3) is not in $\{y-2,y-1\}$, since $i\ge2$. (Z1) is in $\{y,y+1\}$. (Z2) is in $\{0,1\}$ and $\equiv i-1+[\beta=\IN]$, so it equals $1-\tau$; this matters only if $y=2$. In (Z3) with $t\ge2$, by induction the value is $y-2+\bigl((t+[\delta=\IN])+(i-1-t+[\delta\ne\beta])\bmod 2\bigr)=y-1-\tau$. For $t=1$ and $\delta=\IN$ the value is in $\{y,y+1\}$. For $t=1$ and $\delta=\OUT$ it is $1\oplus\rho(i-2;\OUT,\beta)=1-\tau$, which matters only if $y=2$.
\end{proof}

\begin{lemma}\label{lem:L4}
For $k,i\ge1$ and $\alpha,\beta\in\{\IN,\OUT\}$, $\Gamma(\sn;(1,\free),(k,\alpha),(i,\beta))$ equals $3$ if $k=i=1$ and $\alpha=\beta$, and equals $(k+i+[\alpha\neq\beta])\bmod 2$ otherwise.
\end{lemma}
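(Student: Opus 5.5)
We argue by induction on $k+i$, in the style of \cref{lem:L1,lem:L3}. We list the options of the state $(\sn;(1,\free),(k,\alpha),(i,\beta))$ from \cref{prop:spiderrec}; since the centre is unmarked, every move has a successor. There are three kinds of option.

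\textbf{Short leg.} A move on the free leg $(1,\free)$ in direction $\delta$ detaches nothing. It leaves $\Gamma(\sn\cdot\delta;(k,\alpha),(i,\beta))$, which is $\Gamma(\si;\dots)$ or $\Gamma(\so;\dots)$ and is evaluated by \cref{lem:L1}, using \cref{lem:reversal} for $\so$.

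\textbf{First edge of a blocked leg.} A move on the first edge of $(k,\alpha)$ in direction $\delta$ gives $\rho(k-1;\delta,\alpha)\oplus\Gamma(\sn\cdot\delta;(1,\free),(i,\beta))$. The second term is given by \cref{lem:L2} (again with reversal for $\so$). If $k=1$, only $\delta=\alpha$ is allowed.

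\textbf{Deeper edges.} A move at the $j$-th edge with $j\ge2$ gives $\rho(k-j;\delta,\alpha)\oplus\Gamma(\sn;(1,\free),(j-1,\delta),(i,\beta))$, which is known by induction. The leg $(i,\beta)$ is symmetric.

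The plan is to show that, outside the exceptional case, every option lying in $\{0,1\}$ has parity $1-T$, where $T=(k+i+[\alpha\ne\beta])\bmod2$, and that at least one such option exists; then the mex is $T$.

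\textbf{Options that are at least $2$.} By \cref{lem:L2}, $\gamma_\delta(i)\ge2$ unless $(i,\delta)=(1,\OUT)$, so most first-edge moves give values at least $2$ (since $\rho\le1$). The exceptions occur exactly when the other leg is $(1,\OUT)$ with $\si$ (or $(1,\IN)$ with $\so$). In those cases \cref{lem:L2} gives $\gamma=0$, and the option is $\rho(k-1;\delta,\alpha)$; we check its parity by \eqref{eq:R}.

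\textbf{Deeper moves.} For $j\ge2$, induction gives a value in $\{0,1\}$ congruent to $(j-1+i+[\delta\ne\beta])$. Adding $\rho(k-j;\delta,\alpha)\equiv k-j+[\delta\ne\alpha]$ gives total parity $k+i-1+[\alpha\ne\beta]\equiv T+1$, using $[\delta\ne\alpha]+[\delta\ne\beta]\equiv[\alpha\ne\beta]$. There is one exceptional subcase: the successor is $(1,\free),(1,\delta),(i,\beta)$ with $i=1$ and $\delta=\beta$, whose value is $3$. The resulting option $3\oplus\rho\ge2$ is harmless.

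\textbf{Short-leg moves.} By \cref{lem:L1}, third case, the value lies in $\{0,1\}$ when neither leg is $(1,\OUT)$ relative to the new status. Its parity is $(k+i+[\alpha=\beta])$ for $\si$, and the same for $\so$ after reversal. This is $\equiv T+1$, which supplies the required option of parity $1-T$. In the cases where \cref{lem:L1} gives $i\oplus1$ or $k\oplus1$, the value is either at least $2$ or is $0$ when the relevant length is $1$; we check by hand that this $0$ is consistent with $T=1$.

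\textbf{The exceptional case $k=i=1$, $\alpha=\beta$.} Here we compute the options directly. Say $\alpha=\beta=\IN$.

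\emph{Short leg inwards} gives $\Gamma(\si;(1,\IN),(1,\IN))=(1+1+1)\bmod2=1$.

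\emph{Short leg outwards} gives $\Gamma(\so;(1,\IN),(1,\IN))=\Gamma(\si;(1,\OUT),(1,\OUT))=1\oplus1=0$.

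\emph{First edge of a blocked leg}: only $\delta=\IN$ is allowed, giving $\rho(0;\IN,\IN)\oplus\gamma_\IN(1)=0\oplus2=2$.

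So the option set is $\{0,1,2\}$ and the value is $3$. The case $\alpha=\beta=\OUT$ is the reversal.

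\textbf{Main obstacle.} The difficulty is bookkeeping of the boundary cases where a leg is $(1,\OUT)$ or $(1,\IN)$. There the formulas of \cref{lem:L1,lem:L2} switch to their non-parity branches, and one must confirm that the resulting small values never equal $T$ and that a parity-$(1-T)$ option survives. I would first organise these by whether $\{k,i\}$ contains $1$, treating $k,i\ge2$ as the clean generic case.
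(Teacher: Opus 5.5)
Your proposal is correct and follows essentially the same route as the paper. It uses induction on $k+i$ and sorts the options into short-leg moves (via \cref{lem:L1} and reversal), first-edge moves (via \cref{lem:L2}) and deeper moves (via induction, including the $3\oplus\rho\ge2$ subcase); it then shows that every option in $\{0,1\}$ equals $1-T$ and that $1-T$ occurs, and computes the exceptional case directly as $\mex\{0,1,2\}=3$. The differences are minor: you produce an option of value $1-T$ for both values of $T$ via the short-leg moves, whereas the paper only needs and checks $0\in O$ when $T=1$; and the parity checks you defer to ``by hand'' are the short computations the paper writes out.
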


\begin{proof}
Induction on $k+i$. The options are:
(X) $\Gamma(\si;(k,\alpha),(i,\beta))$ and $\Gamma(\so;(k,\alpha),(i,\beta))=\Gamma(\si;(k,\bar\alpha),(i,\bar\beta))$;
(K1) $\rho(k-j;\delta,\alpha)\oplus\Gamma(\sn;(1,\free),(j-1,\delta),(i,\beta))$ for $j\ge2$;
(K2) $\rho(k-1;\delta,\alpha)\oplus\gamma_{\beta'}(i)$, where $\beta'=\beta$ if $\delta=\IN$ and $\beta'=\bar\beta$ if $\delta=\OUT$ (the centre becomes $\si$ or $\so$; use reversal);
and the symmetric moves (I1), (I2) on $(i,\beta)$.

\emph{Exceptional case.} By \cref{lem:L1}, (X) gives $\{0,1\}$. (K2) and (I2) have $\delta=\alpha$ and give $\gamma_\IN(1)=2$. So the value is $3$.

\emph{Otherwise} let $T=(k+i+[\alpha\neq\beta])\bmod2$. Every option in $\{0,1\}$ equals $1-T$:
(X) in the third case of \cref{lem:L1} gives $(k+i+[\alpha=\beta])\bmod 2=1-T$ (for both versions). The first two cases give $i\oplus1$ or $k\oplus1$, which lie in $\{0,1\}$ only if $k=i=1$. Then $\alpha\ne\beta$, $T=1$, and the value is $0=1-T$.
(K1): if $j=2$, $i=1$ and $\delta=\beta$, the second term is $3$ and the option is at least $2$. Otherwise the option is in $\{0,1\}$ and $\equiv (k-j+[\delta\ne\alpha])+(j-1+i+[\delta\ne\beta])\equiv T+1$.
(K2): by \cref{lem:L2} the option is at least $2$ unless $(i,\beta')=(1,\OUT)$, i.e.\ $i=1$ and $\delta=\bar\beta$. Then it is $\rho(k-1;\bar\beta,\alpha)\equiv k-1+[\alpha=\beta]\equiv T+1$.
(I1), (I2) are symmetric.

If $T=1$, then $0\in O$. Indeed, if neither leg is $(1,\OUT)$, then (X) with status $\si$ gives $1-T=0$. If, say, $(k,\alpha)=(1,\OUT)$, then (X) with status $\so$ gives $\Gamma(\si;(1,\IN),(i,\bar\beta))$. This is $1-T=0$ by the third case of \cref{lem:L1}, or $1\oplus1=0$ if $(i,\bar\beta)=(1,\OUT)$. Hence the value is $T$.
\end{proof}

\begin{lemma}\label{lem:L5}
Let $j\ge1$, $y=2j$, $i\ge1$ and $\delta\in\{\IN,\OUT\}$. Then
\[
  \Lambda(i):=\Gamma(\sn;(1,\free),(y,\free),(i,\delta))=i+y-2+\varepsilon_j(i),
  \qquad
  \varepsilon_j(i)=\begin{cases}-1& i \text{ odd and } \frac{i-1}2\in S_j,\\ +1 & i\text{ even and } \frac i2\in S_j,\\ 0&\text{otherwise,}\end{cases}
\]
independently of $\delta$. In particular $\lfloor \Lambda(i)/2\rfloor=\lfloor i/2\rfloor+j-1$.
\end{lemma}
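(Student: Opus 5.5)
We prove the formula by strong induction on $i$, in the style of \cref{lem:L1,lem:L2,lem:L3,lem:L4}, and we do both directions $\delta$ at once. Write $N(i)=i+y-2$ for the "neutral" value. We first list the options of the state $(\sn;(1,\free),(y,\free),(i,\delta))$ from \cref{prop:spiderrec}:
\begin{itemize}[leftmargin=*]
\item the edge of the short leg $(1,\free)$, giving $\Gamma(\si;(y,\free),(i,\delta))$ and its $\so$-version, which by \cref{lem:reversal} is $u_{\bar\delta}(i)$; \cref{lem:L3} gives these values;
\item cuts of the free leg $(y,\free)$ at the $(k+1)$-th edge, giving $(y-1-k)\oplus\Gamma(\sn;(1,\free),(k,\delta'),(i,\delta))$, evaluated by \cref{lem:L4};
\item the first edge of the free leg, giving $(y-1)\oplus\gamma(i)$ with \cref{lem:L2};
\item the first edge of the blocked leg, giving $\rho(i-1;\delta',\delta)\oplus\Gamma(\si\text{ or }\so;(1,\free),(y,\free))$, a small constant we compute once;
\item cuts of the blocked leg, giving $\rho(i-1-t;\delta',\delta)\oplus\Lambda(t)$ for $1\le t\le i-1$, with $\delta'=\delta$ forced at $t=i-1$.
\end{itemize}
By \eqref{eq:R}, the detached bit is $0$ or $1$ and takes both values as $\delta'$ varies whenever $t\le i-2$.

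All options except the last group are essentially bounded by about $y+1$, apart from nim-sums with bits. From \cref{lem:L3,lem:L4}, the first three groups together cover $\{0,\dots,y-3\}$ and a few values near $y-2,y-1,y,y+1$. We record exactly which of these occur, depending on $i$ and $i\bmod 2$; for $i\ge2$ this does not depend on $\delta$.

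The last group supplies the pairs $\{\Lambda(t),\Lambda(t)\oplus1\}$ for $t\le i-2$. By the inductive consequence $\lfloor\Lambda(t)/2\rfloor=\lfloor t/2\rfloor+j-1$, the pair for $t$ is $\{2(\lfloor t/2\rfloor+j-1),\,2(\lfloor t/2\rfloor+j-1)+1\}$. Taking the union over $t$ gives every value from $y-2$ up to $2\lfloor(i-2)/2\rfloor+y-1$, by \eqref{eq:pairs}. The extra move at $t=i-1$ has forced direction and contributes the single value $\rho(0;\delta,\delta)\oplus\Lambda(i-1)=\Lambda(i-1)$. We then check that nothing else reaches the top range, so that $O$ contains an initial segment, and the value $\Lambda(i)$ is decided by whether $N(i)-1$, $N(i)$, $N(i)+1$ lie in $O$.

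The key case analysis is the following.
\begin{itemize}[leftmargin=*]
\item If $i$ is odd, the pairs cover $[0,N(i)-2]$, and $\Lambda(i-1)\in\{N(i)-2,N(i)\}$. Hence $\Lambda(i)=N(i)-1$ exactly when $N(i)-1\notin O$. Otherwise $\Lambda(i)=N(i)$, provided $N(i)\in O$ from $\Lambda(i-1)=N(i)$ or from another source, and $N(i)+1\notin O$.
\item If $i$ is even, the pairs cover $[0,N(i)-1]$, and the value is $N(i)$ or $N(i)+1$ according as $N(i)$ is missing or present. The candidate is $\Lambda(i-1)\in\{N(i)-2,N(i)-1\}$, so $N(i)$ must come from elsewhere.
\end{itemize}
This is where the binary carries enter. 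The point is that $\Lambda(i)$ is decided by the nim-sums $(y-1-k)\oplus(\text{bit})$ and $(y-1)\oplus\gamma(i)$, together with $\Lambda(i-1)$. We show that $\varepsilon_j(i)$ satisfies a recursion of the form $\varepsilon(2b+1)=-[\varepsilon(2b)\ldots]$, which translates into $b\in S_j$ if and only if $\binom{b+j-1}{b}$ is odd. We expect to establish this through a Pascal-type recursion, $\chi_j(b)=\chi_j(b-1)\oplus\chi_{j-1}(b)$, arising from comparing consecutive values.

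The hard step is pinning down precisely which values near $N(i)$ arise. Since $N(i)$ grows with $i$ while the first three groups are bounded by about $y+1$, for large $i$ only the blocked-leg cuts matter. The decision then rests entirely on the induction, and we must show that it reproduces the carry pattern. If the direct recursion on $\varepsilon$ is messy, we will first tabulate small $j$, guess the recursion relating block $b$ to block $b-1$, and then prove it.
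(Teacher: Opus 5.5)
Your list of options is correct, and so is your skeleton, which matches the paper's: induction on $i$, and blocked-leg cuts with $t\le i-2$ supplying the pairs $\{2\beta_t,2\beta_t+1\}$ through $\lfloor\Lambda(t)/2\rfloor=\lfloor t/2\rfloor+j-1$. You also correctly observe that for even $i$ the value $N(i)$ cannot come from $\Lambda(i-1)$. The gap is at the one step that carries the content of the lemma.

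You name $(y-1)\oplus\gamma(i)$ as one of the deciding options, but you never compute it. You then discard it by asserting that the first three groups are bounded by about $y+1$, so that for large $i$ only the blocked-leg cuts matter. That is false. By \cref{lem:L2}, $\gamma_\delta(i)\in\{i-1,i+1\}$, so the option from marking the first edge of the free leg grows linearly in $i$ and can equal $N(i)$. The assertion even contradicts the lemma. If nothing else reached the top range, your own case analysis would give $N(i)\notin O$ for every large even $i$, hence $\Lambda(i)=N(i)$. For the next odd $i$ you would then have $\Lambda(i-1)=N(i)-1\in O$ and $N(i)\notin O$. So $\varepsilon_j(i)=0$ for all large $i$, although $S_j$ is infinite. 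The Pascal-type recursion $\chi_j(b)=\chi_j(b-1)\oplus\chi_{j-1}(b)$ has nothing to attach to either: the induction runs over $i$ with $y$ fixed, and no option of the state involves the spider with $y-2$ in place of $y$.

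The missing idea is that this single option produces the carries directly.
\begin{itemize}
\item For $i=2c$, \cref{lem:L2} gives $\gamma_{\IN}(i)=\gamma_{\OUT}(i)=2c+1$, and $(y-1)\oplus(2c+1)=(2(j-1)+1)\oplus(2c+1)=2((j-1)\oplus c)\le 2(c+j-1)=N(i)$. Equality holds if and only if $c\mathbin{\&}(j-1)=0$. All other options lie in, and cover, $[0,N(i)-1]$, so $\Lambda(2c)=N(i)+[c\in S_j]$.
\item For $i=2c+1$ the option gives the odd numbers $2((j-1)\oplus c)+1$ and $2((j-1)\oplus(c+1))+1$. The first equals $N(i)=2(c+j-1)+1$ if and only if $c\in S_j$. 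The second never does, because $a+b-(a\oplus b)=2(a\mathbin{\&}b)$ is even. The options other than these two and $\Lambda(i-1)$ cover exactly $[0,N(i)-2]$, and $\Lambda(i-1)=\Lambda(2c)=N(i)-1+[c\in S_j]$. Hence $\Lambda(2c+1)=N(i)-[c\in S_j]$.
\end{itemize}

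Your case analysis also has two slips. For odd $i$ one has $\Lambda(i-1)\in\{N(i)-1,N(i)\}$, not $\{N(i)-2,N(i)\}$. And $\Lambda(i)=N(i)$ then requires $N(i)\notin O$, not $N(i)\in O$. Finally, $i=1$, where there are no pairs, needs a separate check: every option outside $\{0,\dots,y-3\}$ is odd, so $\Lambda(1)=y-2$.
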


\begin{proof}
Induction on $i$. The options are:
(a) $u_\delta(i)$ and $u_{\bar\delta}(i)$ (the edge of the short leg; \cref{lem:L3});
(b) $(y-1)\oplus\gamma_\delta(i)$ and $(y-1)\oplus\gamma_{\bar\delta}(i)$ (the first edge of the free leg; \cref{lem:L2});
(c) $(y-1-k)\oplus\Gamma(\sn;(1,\free),(k,d),(i,\delta))$ for $1\le k\le y-1$ and $d\in\{\IN,\OUT\}$ (\cref{lem:L4});
(d) $\rho(i-1;d,\delta)\oplus1$ (the first edge of the blocked leg; the remaining value is $\Gamma(\si;(1,\free),(y,\free))=(y+1)\bmod 2=1$);
(e) $\rho(i-1-t;d,\delta)\oplus \Lambda_d(t)$ for $1\le t\le i-1$, where $\Lambda_d$ is $\Lambda$ with far end $d$ and $d=\delta$ if $t=i-1$.
All of these are symmetric in $\delta$ once $\Lambda_\IN=\Lambda_\OUT$ for smaller arguments, which proves the independence of $\delta$.

\emph{$i=1$.} Here (a) gives $\{y+1,1\}$, (b) gives $\{(y-1)\oplus2,\,y-1\}$, and (d) gives $1$ (only $d=\delta$). By \cref{lem:L4}, (c) gives $\bigcup_{s=0}^{y-3}\{s,s\oplus1\}=\{0,\dots,y-3\}$ from $k\ge2$, and $(y-2)\oplus3$ and $y-1$ from $k=1$. All values not in $\{0,\dots,y-3\}$ are odd, so $y-2\notin O$ and $\Lambda(1)=y-2$, as claimed ($\varepsilon_j(1)=-1$ since $0\in S_j$).

\emph{$i\ge2$.} By \cref{lem:L4} the two values in (c) for fixed $k$ are $0$ and $1$, so (c) gives $\bigcup_{s=0}^{y-2}\{s,s\oplus1\}=\{0,\dots,y-1\}$. The values in (a) and (d) lie in $\{0,\dots,y-1\}$ as well. In (e), for $t\le i-2$ the two directions give the pair $\{\Lambda(t),\Lambda(t)\oplus1\}=\{2\beta_t,2\beta_t+1\}$ with $\beta_t=\lfloor t/2\rfloor+j-1$ (induction). For $t=i-1$ we get $\Lambda(i-1)$. So
\[
O=\{0,\dots,y-1\}\ \cup\ \bigcup_{t=1}^{i-2}\{2\beta_t,2\beta_t+1\}\ \cup\ \{\Lambda(i-1)\}\ \cup\ \{(y-1)\oplus\gamma_\IN(i),\,(y-1)\oplus\gamma_\OUT(i)\}.
\]
The second set is empty for $i=2$ and equals $\{2(j-1),\dots,2(\lfloor\tfrac{i-2}{2}\rfloor+j-1)+1\}$ for $i\ge3$. Write $y-1=2(j-1)+1$.

\emph{$i=2c$ even.} The first two sets give $\{0,\dots,i+y-3\}$, and $\Lambda(i-1)\le i+y-3$. By \cref{lem:L2}, $\gamma_\IN(i)=\gamma_\OUT(i)=2c+1$, so the last set is $\{2((j-1)\oplus c)\}$. We have $2((j-1)\oplus c)\le 2(c+j-1)=i+y-2$, with equality if and only if $c\in S_j$. Hence $\Lambda(i)=i+y-1$ if $c\in S_j$ and $\Lambda(i)=i+y-2$ otherwise, as claimed.

\emph{$i=2c+1$ odd ($c\ge1$).} The first two sets give $\{0,\dots,i+y-4\}$. By induction $\Lambda(i-1)=2(c+j-1)+[c\in S_j]$. By \cref{lem:L2}, the last set is $\{2((j-1)\oplus c)+1,\;2((j-1)\oplus(c+1))+1\}$, which consists of odd numbers. If $c\in S_j$, then $2(c+j-1)=i+y-3\notin O$, and $\Lambda(i)=i+y-3$. If $c\notin S_j$, then $i+y-3=\Lambda(i-1)\in O$. Moreover $i+y-2=2(c+j-1)+1\notin O$: we have $(j-1)\oplus c\ne c+j-1$ because $c\notin S_j$, and $(j-1)\oplus(c+1)\ne c+j-1$ because $a+b-(a\oplus b)=2(a\mathbin{\&}b)$ is even. So $\Lambda(i)=i+y-2$.
\end{proof}

\subsection{Proof of \texorpdfstring{\cref{thm:onetwoj}}{Theorem 9.1}}
Let $y=2j$ and $R(z)=\Gamma(\sn;(1,\free),(y,\free),(z,\free))=\gv(\Sp(1,y,z))$. Write $f(n)=\Gamma(\si;(1,\free),(n,\free))=(n+1)\bmod 2$ (\cref{lem:L2}). The options of $R(z)$ are:
\begin{enumerate}[label=(\alph*),leftmargin=*]
\item $\Gamma(\si;(y,\free),(z,\free))$ (both directions give the same value by reversal);
\item $(y-1)\oplus f(z)$;
\item $(y-1-k)\oplus\Gamma(\sn;(1,\free),(k,d),(z,\free))$, $1\le k\le y-1$;
\item $(z-1)\oplus f(y)=(z-1)\oplus1$;
\item $(z-1-t)\oplus \Lambda(t)$, $1\le t\le z-1$.
\end{enumerate}
Among the options of the state in (a) are $(z-1-t)\oplus u_d(t)$, $2\le t\le z-1$, and $\{u_\IN(t),u_\OUT(t)\}=\{y-2,y-1\}$ by \cref{lem:L3}. Among the options of the state in (c) are $(z-1-t)\oplus\Gamma(\sn;(1,\free),(k,d),(t,d'))$, $2\le t\le z-1$, whose two values for $d'=\IN,\OUT$ are $0$ and $1$ by \cref{lem:L4}; so the state in (c) has value at least $z-2$ by \eqref{eq:pairs}.

\emph{Proof of part (a) ($y=2$).} For $z=1,2$ the spider has two equal legs, and \cref{cor:twinspider} gives the claim. Let $z\ge3$. The state in (a) has the options $(z-1-t)\oplus\{0,1\}$ for $2\le t\le z-1$, which by \eqref{eq:pairs} include $\{0,1\}$; hence its value is at least $\max(2,z-2)$. The same holds for the state in (c) ($k=1$). Option (d) is $(z-1)\oplus1\ge2$. Option (b) equals $1\oplus f(z)=z\bmod 2$. By \cref{lem:L5} with $S_1=\NN$, $\Lambda(t)=t\oplus1$, so every option in (e) is $\equiv(z-1-t)+t+1=z\pmod 2$. For even $z$ we get $0\in O$ from (b), and $1\notin O$. For odd $z$ we get $0\notin O$. Hence $R(z)=(z+1)\bmod 2$.

\emph{Proof of part (b) ($j\ge2$, $z\ge y+2$).} Option (a) is at least $2$: take $t=z-y+1\in[2,z-1]$, which gives the options $0$ and $1$. Option (b) is at least $y-2\ge 2$. Option (c) is at least $2$, because the state has value at least $z-2\ge y>(y-1-k)\oplus 1$. Option (d) is at least $2$ since $z\ge3$. Hence $O\cap\{0,1\}=E\cap\{0,1\}$, where $E$ is the set (e).

Put $D(t)=\Lambda(t)-(z-1-t)=2t+\varepsilon_j(t)-M$, where $M=z-y+1$. Then $0\in E$ if and only if $D(t)=0$ for some $t$. Also $1\in E$ if and only if, for some $t$, either $D(t)=1$ and $z-1-t$ is even, or $D(t)=-1$ and $z-1-t$ is odd. The values of $t$ found below lie in $[1,z-1]$ because $z\ge y+2$.

\emph{$z$ even, $M=2q+1$ with $q=\frac{z-y}2\ge1$.} $D(t)=0$ forces $\varepsilon_j(t)=\pm1$. This gives $t=q+1$ odd with $\frac q2\in S_j$, or $t=q$ even with $\frac q2\in S_j$. So $0\in E$ if and only if $q$ is even and $\frac q2\in S_j$. Next, $D(t)=\pm1$ forces $\varepsilon_j(t)=0$ and $t\in\{q,q+1\}$, and the parity condition requires $t$ odd for $D=1$ ($t=q+1$) and $t$ even for $D=-1$ ($t=q$). For odd $q$ both fail. For even $q$ the condition is $\varepsilon_j(q+1)=0$, resp.\ $\varepsilon_j(q)=0$, i.e.\ $\frac q2\notin S_j$. So never both $0,1\in E$, and $R(z)=1$ if and only if $q$ is even and $\frac q2\in S_j$, i.e.\ $r=2$ and $b\in S_j$.

\emph{$z$ odd, $M=2q$ with $q=\frac{z-y+1}2\ge2$.} $D(t)=0$ if and only if $t=q$ and $\varepsilon_j(q)=0$. The case $D(t)=1$ with $t$ even forces $\varepsilon_j(t)=1$ and $t=q$. The case $D(t)=-1$ with $t$ odd forces $\varepsilon_j(t)=-1$ and $t=q$. So $1\in E$ if and only if $\varepsilon_j(q)\neq0$. Hence $R(z)=1$ if and only if $\varepsilon_j(q)=0$, i.e.\ $\lfloor q/2\rfloor\notin S_j$. Since $\lfloor q/2\rfloor=\lfloor (z-y+2)/4\rfloor=b$, this is the claim.
\qed

\begin{remark}\label{rem:onetwojwindow}
The proof needs $z\ge 2j+2$ mainly to exclude the options (a)--(d). By machine computation (two independent programs), the formula of \cref{thm:onetwoj}(b) holds for all $z\ge 2j-2$ when $3\le j\le 80$, and for all $z\ge 3$ when $j=2$. For $3\le j\le 80$ we also have $\gv(\Sp(1,2j,2j-3))\ne\gv(\Sp(1,2j,2j-3+4N))$ with $N$ as in \cref{cor:onetwojperiod}, so the sequence is purely periodic from $z=2j-2$ and not from $z=2j-3$.
\end{remark}

\begin{remark}[Why $x=1$ is special]\label{rem:whyone}
For every pair covered by \cref{thm:cert}, the computations behind it show that the blocked family $\Gamma(\sn;(x,\free),(y,\free),(t,\delta))$ is eventually $t+c$ with $c$ even. The long-leg cuts then all have the parity of $z-1$, as in the proof of \cref{thm:s23}. For $x=1$ and even $y$, \cref{lem:L5} shows instead that the shift $\Lambda(t)-t$ oscillates with $t$ in a $2$-adic pattern. The correction $\varepsilon_j$ comes from a single option: cutting the leg with $y$ markable edges at its first edge gives $(y-1)\oplus\gamma_\delta(t)$. Here $\gamma_\delta(t)\in\{t-1,t+1\}$ grows linearly because marking the edge of the leg with one markable edge frees the centre and turns the blocked leg into a nim heap (\cref{lem:L2}).
\end{remark}

\section{Short threads}\label{sec:threads}

So far we have mostly broken things. In this section parity wins. A \emph{thread} is a maximal path in a piece whose interior vertices have degree~$2$. Every run that occurs in a position lies inside a thread, so short threads leave no room for large nim heaps, and one may hope that parity survives. It does: the parity conjecture holds for all trees whose threads have at most two edges. The bound is sharp, since there are counterexamples all of whose threads have at most three edges, although every counterexample with at most $22$ vertices has a thread with at least $4$ edges (\cref{thm:threads3}).

\subsection{Clean and almost clean trees}
The proof is a strategy argument. We find a class $\mathcal C$ of status-labelled trees, containing all pieces of such trees, such that every move from a member of $\mathcal C$ leaves a sum of members of $\mathcal C$, possibly after a single reply (\cref{lem:main}). Induction on the number of edges then does the rest.

Throughout, $X$ denotes a status-labelled tree in normal form (\cref{def:normalform}) in which no
leaf has status $\sn$. This holds for every component of every position: a vertex of status $\sn$
has not lost any edge, and the non-leaf vertices of a piece have degree at least~$2$.
A leaf is \emph{free} if its status is $\sff$ and \emph{blocked} otherwise. We write $d(x)$ for the
degree of $x$ in $X$. A \emph{chain edge} is an edge both of whose ends have degree~$2$.
For $\theta\in\{\si,\so\}$ we say that a vertex \emph{receives type $\theta$} when a move gives it an
incoming ($\theta=\si$) or outgoing ($\theta=\so$) arc; its status then becomes $s\cdot\theta$
(\cref{lem:legal}). We write $\bar\theta$ for the other type.
For an edge $cr$ let $X_{r|c}$ be the component of $X-cr$ containing $r$.

\begin{definition}\label{def:clean}
$X$ is \emph{clean} if all its leaves are free and it has no chain edge.
$X$ is \emph{almost clean with defect $w$} if $s(w)\in\{\si,\so\}$ and
\begin{enumerate}[label=\textup{(C\arabic*)}]
\item every leaf other than $w$ is free;
\item every chain edge is incident with $w$;
\item if $w$ is a leaf, then its neighbour has degree at least $3$.
\end{enumerate}
Let $\mathcal C$ be the class of trees $X$ that are clean, almost clean, or have at most one edge.
A \emph{$\mathcal C$-sum} is a position whose components all lie in $\mathcal C$.
\end{definition}

The pieces of a tree $T$ have free leaves and all other vertices of status $\sn$. Two adjacent
vertices of degree~$2$ in a piece are interior vertices of a common thread. Hence:

\begin{observation}\label{obs:clean}
All threads of $T$ have at most two edges if and only if every piece of $T$ is clean.
\end{observation}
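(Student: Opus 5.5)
The observation has two directions, and I will prove each straight from the definitions of pieces, threads and clean trees. First I recall what a piece looks like. It is a component of the empty position in normal form. Its leaves are exactly the special vertices where the cut was made, and they carry status $\sff$. Every other vertex has status $\sn$ and the same degree as in $T$. That degree is at least $2$, because a non-leaf of $T$ that is not special has only non-leaf neighbours, none of whose edges are pendant. So every leaf of every piece is free, and a piece is clean exactly when it has no chain edge. It therefore suffices to show that all threads have at most two edges if and only if no piece contains an edge both of whose ends have degree $2$ in the piece.

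For ($\Leftarrow$), suppose some thread $x_0x_1\cdots x_\ell$ in a piece has $\ell\ge3$ edges. Its interior vertices $x_1,\dots,x_{\ell-1}$ have degree $2$ in the piece. Then $x_1$ and $x_2$ are adjacent interior vertices, so $x_1x_2$ is a chain edge and the piece is not clean.

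For ($\Rightarrow$), suppose a piece has a chain edge $uv$ with $d(u)=d(v)=2$. I extend the path $uv$ in both directions through vertices of degree $2$ in the piece, until I reach on each side a vertex of degree $\ne2$. This always stops, since the piece is a finite tree and no cycle can occur. The result is a maximal path whose interior vertices all have degree $2$, that is, a thread. Both $u$ and $v$ are interior vertices of it, and an endpoint of the thread on either side lies beyond them. So the thread has at least three edges, which contradicts the hypothesis.

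The only delicate point is the degenerate case where the whole piece is a path, so that the extension ends at the two free leaves. This is harmless, since the leaves have degree $1\ne2$ and the extension still stops there with $u,v$ interior. The observation is essentially definitional, and I expect no real obstacle beyond stating cleanly that pieces have only free leaves.
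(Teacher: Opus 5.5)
Your proof is correct and follows the paper's route. The paper notes that pieces have only free leaves, so being clean means having no chain edge, and that two adjacent degree-$2$ vertices of a piece are interior vertices of a common thread; you spell out both directions of this in more detail.
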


\begin{theorem}\label{thm:clean}
Every $X\in\mathcal C$ satisfies $\gv(X)=|E(X)|\bmod 2$. In particular, every clean status-labelled
tree (with arbitrary statuses $\sn,\si,\so$ on its non-leaf vertices) satisfies $\gv(X)=|E(X)|\bmod 2$.
\end{theorem}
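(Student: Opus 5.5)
We prove the statement by strong induction on $|E(X)|$ with a strategy argument, using the paper's forthcoming \cref{lem:main}. That lemma says that every move from $X\in\mathcal C$ either leaves a $\mathcal C$-sum, or admits a legal reply in the same component after which a $\mathcal C$-sum remains. We want $\gv(X)=|E(X)|\bmod 2$, so we must show two things. First, when $|E(X)|$ is even, no option has value $0$. Second, when $|E(X)|$ is odd, some option has value $0$ and none has value $1$. Both would follow from the single statement
\[
\text{every option } X' \text{ of } X \text{ satisfies } \gv(X')\equiv |E(X)|-1 \pmod 2,
\]
together with the existence of at least one option when $|E(X)|$ is odd. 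The values need not lie in $\{0,1\}$; only their parity matters for the mex.

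The base cases are trees with at most one edge. A single edge with suitable statuses has $\gv\in\{0,1\}$ according to legality, but a single edge in $\mathcal C$ must have both ends free. The reason is that a clean tree with one edge has two free leaves, and an almost-clean tree with one edge violates (C3). So its value is $1$, as claimed.

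For the inductive step, take an option $X'$ obtained by a move $e$. If $X'$ is a $\mathcal C$-sum, \cref{prop:sum} and the induction hypothesis give $\gv(X')\equiv |E(X')|$, because parities add under $\oplus$. Deleting $e$ and any dead edges changes the edge count, so I need the normal-form bookkeeping to show that dead edges do not arise inside $\mathcal C$ (or arise in pairs). I expect this to follow because free leaves and (C3) rule out forced single edges. If $X'$ is not a $\mathcal C$-sum, \cref{lem:main} supplies a reply $X''$ which is a $\mathcal C$-sum with $|E(X'')|=|E(X)|-2$ (again modulo dead edges). Then $\gv(X'')\equiv|E(X)|$ by induction, so $X'$ has an option of parity $|E(X)|$.

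This second case does not immediately give the parity of $\gv(X')$ itself, so the plan needs strengthening. I would prove the induction claim in the form ``every $\mathcal C$-sum $P$ has $\gv(P)=|E(P)|\bmod 2$, and every position one move away from a $\mathcal C$-sum has value $\not\equiv |E|$''. The cleanest route is a pairing strategy on $X+Y$, where $Y$ is a fixed reference game of value $|E(X)|\bmod 2$, played as a copycat-style second-player win. I would first try to show directly, by using \cref{lem:main} on all moves from $X'$, that every option of $X'$ either is a $\mathcal C$-sum or again admits a reply. This reduces the claim to ``a $\mathcal C$-sum minus one move has parity opposite to its edge count'', and that reduction needs existence of a move only in the odd case, which is immediate: any clean tree with an edge has a legal move at a free leaf.

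The main obstacle is \cref{lem:main} itself. The difficult case analysis concerns moves at the defect $w$ and at degree-$2$ vertices next to $w$, where a chain edge not at $w$ can be created. The reply must cut that chain so that the new defects satisfy (C2) and (C3). I would try first to reply on the adjacent chain edge in the direction that makes the shared vertex free.
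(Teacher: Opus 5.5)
Your argument does not close, and the reduction it rests on fails in two ways.

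First, it is too weak. Write $e=|E(X)|$. Knowing that every option of $X$ has value $\equiv e-1 \pmod 2$, and that some move exists, does not give $\gv(X)=1$ for odd $e$. An option set such as $\{2\}$ has the right parity but mex $0$. For odd $e$ you need an option of value \emph{exactly} $0$, and an arbitrary legal move at a free leaf does not supply one.

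Second, it is too strong. You never need the parity of $\gv(X')$. Yet you stop where one more line would finish, and replace the ending by an unexecuted plan (a strengthened induction, a pairing strategy on $X+Y$). That plan aims at a claim \cref{lem:main} does not give you. Options that need a reply are not $\mathcal C$-sums and can have values $\ge 2$. For instance, let $w$ have status $\si$ and degree $2$, with one neighbour of degree $2$ leading to a free leaf and the other of degree $3$ carrying two free leaves. The move into $w$ from the degree-$3$ side leaves a clean two-edge tree plus $\R_2(\rgt,\free)$, of value $0\oplus 2=2$.

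The missing observation is that a $\mathcal C$-sum with $k<e$ edges has value \emph{exactly} $k\bmod 2$, not merely a value congruent to $k$. By induction each component has value in $\{0,1\}$, and a nim-sum of bits is the parity of their sum. Your own first formulation then follows at once: the option set must contain $(e-1)\bmod 2$ and avoid $e\bmod 2$.
\Cref{lem:main}(i) gives an option that is a $\mathcal C$-sum with $e-1$ edges, hence of value $(e-1)\bmod 2$. By \cref{lem:main}(ii), any other option $X'$ has an option $X''$ of value exactly $e\bmod 2$. So $\gv(X')\neq e\bmod 2$, simply because a Grundy value differs from the values of all its options. That is the whole proof in the paper.

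Your concern about dead edges is already settled by the edge counts in \cref{lem:main}, via \cref{lem:deadchar}.

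Finally, your base case is misargued. $\mathcal C$ contains every single edge in normal form, including edges with a blocked end such as $\R_1(\rgt,\free)$, which arise in the replies of \cref{lem:main}. Such an edge has value $1$ because, not being dead, it has a legal orientation, and that move empties the game.
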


\begin{corollary}\label{cor:threads2}
Let $T$ be a tree all of whose threads have at most two edges. Then $\gv(T)=\m(T)\bmod 2$, so the
parity conjecture holds for $T$. Moreover, every position obtained from $T$ by a single move has
Grundy value $(\m(T)-1)\bmod 2$. In particular Zandee's conjecture~\cite[Conjecture~4.3]{Zandee2022}
holds for $T$, and if $\m(T)$ is odd then every first move on $T$ is a winning move.
\end{corollary}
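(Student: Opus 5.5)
We derive \cref{cor:threads2} from \cref{thm:clean}, \cref{obs:clean} and the decomposition results of \cref{sec:forest}; the work is bookkeeping about which components appear.

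\emph{First claim.} By \cref{obs:clean}, every piece $Q$ of $T$ is clean, so $Q\in\mathcal C$. \Cref{thm:clean} gives $\gv(Q)=|E(Q)|\bmod 2$. By \cref{cor:pieces}, $\gv(T)=\bigoplus_Q\gv(Q)$, and since a nim-sum of bits is the parity of their sum, $\gv(T)=\sum_Q|E(Q)|\bmod 2=\m(T)\bmod 2$. \Cref{prop:reform} (or directly \cref{conj:parity}) then gives the parity conjecture for $T$.

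\emph{Second claim.} Let $P$ be the position after one legal move, say in piece $Q_0$. The other pieces are unchanged, so it suffices to show that each component $K$ of the move from $Q_0$ satisfies $\gv(K)=|E(K)|\bmod 2$, and that these edge counts add up to $|E(Q_0)|-1$.
\begin{itemize}[leftmargin=*]
\item \emph{Membership in $\mathcal C$.} I would take this from \cref{lem:main}, which I expect says that every move from a member of $\mathcal C$ leaves a $\mathcal C$-sum, except in one flagged situation that needs a reply. If that exceptional case can arise from a clean piece, I would check it directly. The move deletes an edge $uv$ and gives $u,v$ non-free types. Each new component is a branch at $u$ or at $v$ (possibly merged through the edge, if the vertex keeps degree $\ge 2$ on one side). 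Its only possibly blocked vertex is $u$ or $v$, so (C1) holds. Any new chain edge must be incident with $u$ or $v$, because only their degrees dropped, so (C2) holds. If $u$ becomes a leaf, its neighbour had degree $\ge 3$ by cleanness, unless that neighbour has degree $2$, in which case the edge was already a non-chain edge of a vertex of degree $2$. This last point is exactly where (C3) needs care.
\item \emph{Dead edges.} Dead edges, deleted in the normal form, must be counted. A dead edge is an edge in $\mathcal C$ with at most one edge, of value $0$, which breaks the parity count. So I must argue that after one move from a clean tree no dead edge arises. A dead edge needs a blocked leaf adjacent to a vertex which is itself a blocked leaf, or an end of degree $1$ with the wrong type. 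Since cleanness forbids the relevant degree-$2$ neighbours, only $u$ and $v$ carry types, and they are separated.
\end{itemize}
Granting these points, $\gv(P)=\sum_K|E(K)|\bmod 2=(\m(T)-1)\bmod 2$, which is Zandee's prediction. If $\m(T)$ is odd, every option has value $0$, so every first move wins.

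The main obstacle is the second claim: confirming that a single move from a clean piece produces only $\mathcal C$-components with no dead edge, without the reply that \cref{lem:main} may require. If the reply case cannot be excluded, the fallback is to show that in that case the resulting component still satisfies the parity formula. I would do this by an explicit one-step mex check using \cref{thm:clean} on its options.
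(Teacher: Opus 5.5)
Your outline is the paper's proof. The first claim is identical. For the second claim, the paper also shows that one move on a clean piece $Q$ leaves a $\mathcal C$-sum with $|E(Q)|-1$ edges. It cites the clean case of \cref{lem:main}(ii), which is just \cref{lem:onemod} applied to both sides $X_{a|b}^{\theta}$ and $X_{b|a}^{\theta'}$. No reply is ever needed in that case, so your fallback is unnecessary. It also would not work: if some options left $\mathcal C$, \cref{thm:clean} would not determine the mex.

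The step you left open closes at once. If $u$ drops to a leaf, then $d(u)=2$ in $Q$, so its other neighbour cannot have degree $2$, since that edge would be a chain edge. That neighbour therefore either has degree at least $3$, giving an almost clean tree with defect $u$, or is a free leaf, giving a single edge with a free end. Finally, each side has at most one blocked vertex, so \cref{lem:deadchar} rules out dead edges.
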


\begin{proof}
By \cref{obs:clean} all pieces are clean, so \cref{cor:pieces} and \cref{thm:clean} give
$\gv(T)=\m(T)\bmod 2$. A first move lies in one piece $Q$; by \cref{lem:main}\textup{(ii)} below
(clean case) it turns $Q$ into a $\mathcal C$-sum with $|E(Q)|-1$ edges, whose value is
$(|E(Q)|-1)\bmod 2$ by \cref{thm:clean}. The other pieces are unchanged.
\end{proof}

\Cref{cor:threads2} contains the theorem of Fokkink and Zandee~\cite[Theorem~1]{Fokkink2024} (no
vertex of degree~$2$ once the pendant edges are removed) and Zandee's theorem on trees in which every vertex of degree~$2$ is adjacent
to a leaf~\cite[Theorem~4.2]{Zandee2022}: in both cases all threads have one edge.

\subsection{Three lemmas}

\begin{lemma}\label{lem:deadchar}
Let $F$ be a status-labelled forest with no vertex of status $\sn$ and degree $1$. An edge $xy$ of
$F$ has no legal orientation if and only if $d(x)=d(y)=1$ and $s(x)=s(y)\in\{\si,\so\}$.
\end{lemma}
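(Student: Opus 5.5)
The plan is to unwind the legality criterion of \cref{lem:legal} separately for each orientation of $xy$, with $d$ the degree in $F$.

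Orienting $x\to y$ is legal if and only if
\[
\bigl(d(x)\ge2 \text{ or } s(x)\in\{\si,\sff\}\bigr)\quad\text{and}\quad\bigl(d(y)\ge2 \text{ or } s(y)\in\{\so,\sff\}\bigr).
\]
So it is illegal exactly when $d(x)=1$ with $s(x)\in\{\sn,\so\}$, or $d(y)=1$ with $s(y)\in\{\sn,\si\}$. The hypothesis rules out status $\sn$ at degree~$1$. Hence $x\to y$ is illegal if and only if $\bigl(d(x)=1,\ s(x)=\so\bigr)$ or $\bigl(d(y)=1,\ s(y)=\si\bigr)$. Symmetrically, $y\to x$ is illegal if and only if $\bigl(d(y)=1,\ s(y)=\so\bigr)$ or $\bigl(d(x)=1,\ s(x)=\si\bigr)$.

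The edge is dead when both disjunctions hold, and I would run through the four combinations.
\begin{enumerate}[label=(\roman*)]
\item Taking the $x$-clause both times requires $s(x)=\so$ and $s(x)=\si$ simultaneously, which is impossible.
\item Taking the $y$-clause both times is impossible in the same way.
\item Taking the $x$-clause for $x\to y$ and the $y$-clause for $y\to x$ gives $d(x)=d(y)=1$ and $s(x)=s(y)=\so$.
\item The remaining combination gives $d(x)=d(y)=1$ and $s(x)=s(y)=\si$.
\end{enumerate}
This proves the forward direction. For the converse, one plugs $d(x)=d(y)=1$ and a common status $s(x)=s(y)\in\{\si,\so\}$ back into the two illegality conditions, and both hold.

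There is no real obstacle in this argument. The only point needing care is to use the hypothesis to discard the $\sn$ cases, because without it an edge with one endpoint of status $\sn$ and degree~$1$ would also be dead.
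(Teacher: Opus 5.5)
Your proposal is correct and follows essentially the same route as the paper: both unwind the legality criterion of \cref{lem:legal}, use the hypothesis to discard the $\sn$ case, and observe that a blocked leaf forbids exactly one orientation. The paper compresses your four-case enumeration into the single remark that both orientations are forbidden exactly when the two ends are blocked leaves forbidding opposite orientations, that is, when they have the same status.
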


\begin{proof}
By \cref{lem:legal}, an end of degree at least $2$, or of status $\sff$, never forbids an
orientation, and a blocked end of degree $1$ forbids exactly one orientation: the one in which it
would receive a second arc of its own type. So both orientations are forbidden exactly when both
ends are blocked leaves forbidding opposite orientations, i.e.\ when they have the same status.
\end{proof}

In particular, no component with at least two edges, and no single edge with a free end, contains
a dead edge. In all positions considered below, single-edge components have a free end, so
normalising never deletes edges, and every move decreases the total number of edges by exactly one.

\begin{lemma}[Opposite blocked neighbour]\label{lem:oppblocked}
Let $z$ be a blocked leaf with neighbour $v$, where $s(v)\in\{\si,\so\}$ and $s(v)\neq s(z)$.
Changing $s(v)$ to $\sff$ does not change the game.
\end{lemma}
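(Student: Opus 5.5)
The plan is to show that the two status-labelled forests, with $s(v)$ as given and with $s(v)$ replaced by $\sff$, have exactly the same legal moves in every reachable position. Then the game trees coincide. By \cref{lem:legal}, a status enters the legality test at $v$ only when $d(v)=1$, and then it only forbids the orientation in which $v$ would receive a second arc of its own type. Status $\sff$ never forbids anything. So I must show that whenever the condition at $v$ actually bites in the original game, the move in question was already forbidden at the other endpoint. The statuses of other vertices evolve identically in both games, since the update rule only touches the endpoints of the played edge.

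Say $s(v)=\si$ and $s(z)=\so$; the other case is the reversal (\cref{lem:reversal} style symmetry, or the same argument with arrows reversed). The edge $vz$ is the only edge at $z$, and $z$ is a leaf of status $\so$. By \cref{lem:legal} it can only be oriented $v\to z$. Doing so gives $v$ an outgoing arc, so its status becomes $\si\cdot\so=\sff$ in the original game and stays $\sff$ in the modified game. After this move both games agree at $v$ forever.

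Now consider any position in which $vz$ is still unmarked, and a move on an edge $e$ at $v$. The original game imposes a condition at $v$ only if $d(v)=1$. Since $vz$ is unmarked, $e=vz$ itself. The orientation it forbids, when $s(v)$ is still $\si$ or has become $\so$-free of an outgoing arc, is $z\to v$. But $z\to v$ is already illegal because of $z$: the leaf $z$ has $d(z)=1$ and status $\so$, and it would receive an outgoing arc. So no additional move is forbidden.

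I also need that $s(v)$ stays in $\{\si,\sff\}$ while $vz$ is unmarked, which holds since statuses only move toward $\sff$. While $v$ has not yet received an outgoing arc, its status is $\si$, and the forbidden orientation is exactly the one into $v$. Once it has received one, its status is $\sff$ in both games.

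Finally, I check that status updates match. In the modified game $v$ is $\sff$ throughout. In the original game, $v$'s status matters only through the legality test, which we showed never distinguishes the two games. The status of every other vertex is updated identically.

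The main subtlety is making sure that $d(v)=1$ with $vz$ unmarked forces $e=vz$, and that the status of $v$ cannot become $\so$ without becoming $\sff$. Both follow from the multiplication table in \cref{lem:legal}. Then induction on the number of moves gives an isomorphism of the two games, with the same positions and the same move sets.
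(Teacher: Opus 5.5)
Your proof is correct and is essentially the paper's argument. The condition at $v$ can only matter when $vz$ is the last unmarked edge at $v$. In that situation the orientation it would forbid ($v$ receiving a second arc of its own type) is already excluded by the blocked leaf $z$, and the only legal orientation of $vz$ turns $v$ into $\sff$ in both games.

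The phrase ``has become $\so$-free of an outgoing arc'' is garbled. Your next paragraph fixes it, correctly noting that $s(v)$ only ever takes the values $\si$ and $\sff$.
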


\begin{proof}
The only legal orientation of $vz$ gives $v$ an arc of type $\overline{s(v)}$: if $s(z)=\si$ then
$vz$ must be oriented $z\to v$ and $s(v)=\so$, and symmetrically. Consider any sequence of moves.
While $vz$ is unmarked, $v$ has at least two unmarked edges whenever another edge at $v$ is
oriented, so the condition at $v$ in \cref{lem:legal} holds. When $vz$ is oriented, $v$ receives
the type it lacks, so the condition at $v$ holds and $v$ gets status $\sff$. Afterwards $s(v)=\sff$.
So the condition at $v$ always holds, exactly as if $s(v)=\sff$, and the statuses of all other
vertices do not depend on $s(v)$.
\end{proof}

\begin{lemma}[One modified vertex]\label{lem:onemod}
Let $ab$ be an edge of $X$ and $A=X_{a|b}$. Suppose that every leaf of $X$ lying in $A-a$ is free
and that no edge of $A$ is a chain edge of $X$. Let $\theta\in\{\si,\so\}$, and let $A^\theta$ be
$A$ with $s(a)$ replaced by $s(a)\cdot\theta$. Then the normal form of $A^\theta$ is a
$\mathcal C$-sum with $|E(A)|$ edges.
\end{lemma}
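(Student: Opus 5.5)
The plan is a case split on the new status $s(a)\cdot\theta$. By the table in \cref{lem:legal}, $s(a)\cdot\theta=\sff$ exactly when $s(a)\in\{\sff,\bar\theta\}$, and $s(a)\cdot\theta=\theta\in\{\si,\so\}$ exactly when $s(a)\in\{\sn,\theta\}$. Two facts are used in both cases. First, every vertex of $A$ other than $a$ has the same degree and status in $A^\theta$ as in $X$; only $a$ loses the edge $ab$, so $d_A(a)=d(a)-1$. Second, the trivial case $E(A)=\emptyset$ (i.e.\ $a$ is a leaf of $X$) gives the empty $\mathcal C$-sum, so we assume $|E(A)|\ge1$.

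\emph{Case $s(a)\cdot\theta=\sff$.} We first rule out dead edges. By \cref{lem:deadchar}, a dead edge needs two blocked ends of degree $1$. The leaves of $A^\theta$ are $a$ (if $d_A(a)=1$), which is free, and leaves of $X$ lying in $A-a$, which are free by hypothesis. So there are no dead edges. \Cref{lem:deadchar} applies here because the only vertices of degree $1$ in $A^\theta$ are these leaves, which are not of status $\sn$.

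Normalising then just splits $a$ (\cref{lem:split}) and possibly other $\sff$ vertices. The result is a sum of trees $A_{r|a}+ar$ over the neighbours $r$ of $a$ in $A$, with copies of $a$ as new free leaves, possibly split further at inner $\sff$ vertices. We check that each component is clean or has at most one edge. All its leaves are free: they are either copies of split $\sff$ vertices or leaves of $X$ in $A-a$. A chain edge of a component has both ends of degree $2$ in that component. A split copy has degree $1$, and every unsplit vertex keeps its $X$-degree. Hence such an edge would be a chain edge of $X$ lying in $A$, which the hypothesis excludes. Splitting preserves edges, so the total is $|E(A)|$.

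\emph{Case $s(a)\cdot\theta=\theta\in\{\si,\so\}$.} Vertices of status $\sff$ other than $a$ may split, exactly as above, into clean pieces. The component containing $a$ is the candidate almost clean tree with defect $a$, so we verify (C1)--(C3).
\begin{itemize}[leftmargin=*]
\item (C1): every leaf other than $a$ is free, by hypothesis or because it is a split copy.
\item (C2): an edge not incident with $a$ whose ends have degree $2$ would be a chain edge of $X$ inside $A$, which is impossible.
\item (C3): suppose $a$ is a leaf of $A$, with neighbour $r$. Then $d(a)=2$ in $X$. If $d(r)=2$, the edge $ar$ would be a chain edge of $X$ in $A$, which is excluded. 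If $d(r)=1$, then $r$ is a free leaf and $A$ is a single edge, which lies in $\mathcal C$ anyway. Otherwise $d(r)\ge3$, as (C3) requires.
\end{itemize}
Dead edges are again excluded by \cref{lem:deadchar}, since $a$ is the only possibly blocked leaf. So no edge is lost, and the count $|E(A)|$ is preserved.

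The main obstacle is bookkeeping rather than ideas. We must make sure the splitting at internal $\sff$ vertices does not create new chain edges; it does not, because it only lowers degrees to $1$. We must also handle the degenerate configurations where $a$ or $r$ is a leaf of $X$, or $A$ has at most one edge. These are the places where membership in $\mathcal C$ rests on the ``at most one edge'' clause rather than on cleanness.
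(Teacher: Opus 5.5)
Your proof is correct and takes essentially the paper's approach. You split on whether $s(a)\cdot\theta$ is $\sff$ or $\theta$, use that only $a$ changes degree (so chain edges and blocked leaves can arise only at $a$), and exclude dead edges via \cref{lem:deadchar}; the only differences are that the paper organises the $\theta$ case by $d(a)\in\{1,2,3,\ge4\}$ where you check (C1)--(C3) directly, and that your extra splitting at other $\sff$ vertices is vacuous, since $X$ is in normal form and so its $\sff$ vertices are already leaves.
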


\begin{proof}
All vertices of $A$ other than $a$ have the same degree in $A$ as in $X$, and $d_A(a)=d(a)-1$.
If $s(a)\cdot\theta=\sff$, splitting $a$ (\cref{lem:split}) yields the components of $A-a$, each with
a free leaf attached in place of $a$; their leaves are free and they have no chain edge, so they are
clean or single edges. Otherwise $s(a)\cdot\theta=\theta$. If $d(a)=1$ then $A$ has no edge. If
$d(a)=2$, then $a$ is a blocked leaf of $A^\theta$; its neighbour $z$ satisfies $d(z)\ne2$ because
$az$ is not a chain edge of $X$. If $d(z)=1$ then $z$ is free and $A^\theta$ is a single edge with
a free end; if $d(z)\ge3$ then $A^\theta$ is almost clean with defect $a$. If $d(a)=3$, then $a$ has
degree $2$ and status $\theta$, all leaves are free, and every chain edge of $A^\theta$ is incident
with $a$; so $A^\theta$ is almost clean with defect $a$. If $d(a)\ge4$ then $A^\theta$ is clean.
By \cref{lem:deadchar} there are no dead edges.
\end{proof}

\begin{lemma}\label{lem:almostfacts}
Let $X$ be almost clean with defect $w$.
\begin{enumerate}[label=\textup{(\alph*)}]
\item If $cr$ is an edge with $w\notin X_{r|c}$, then the hypotheses of \cref{lem:onemod} hold
for the edge $rc$ and $A=X_{r|c}$.
\item Let $c\neq w$ be a vertex that becomes $\sff$ while all edges of the component $Y$ of $X-c$
containing $w$, and all statuses in $Y$, are unchanged. Then splitting $c$ turns $Y$ (with a free
leaf in place of $c$) into an almost clean tree with defect $w$ or a single edge with a free end,
and turns every other component of $X-c$ whose edges and statuses are unchanged into a clean tree
or a single edge with a free end.
\end{enumerate}
\end{lemma}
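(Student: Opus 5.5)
Both parts come down to checking the conditions of \cref{def:clean} against how degrees and statuses change. I would argue them separately.

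For (a), let $cr$ be an edge with $w\notin X_{r|c}$, and put $A=X_{r|c}$, with $a=r$ and $b=c$ in \cref{lem:onemod}. Two hypotheses need checking. First, every leaf of $X$ in $A-r$ is free: such a leaf is a leaf of $X$ different from $w$, so it is free by (C1). Second, no edge of $A$ is a chain edge of $X$: by (C2) every chain edge is incident with $w$, and $w\notin A$, so no edge of $A$ has $w$ as an endpoint. That is all of (a).

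For (b), let $c\ne w$ become $\sff$ and split it by \cref{lem:split}. Each component $Z$ of $X-c$ comes back with a new free leaf $c_Z$ attached to the neighbour $z$ of $c$ in $Z$. Every vertex of $Z$ keeps its degree from $X$, including $z$, since the edge $zc$ survives as $zc_Z$. Statuses are unchanged by hypothesis.

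Take first a component $Z$ other than $Y$. Its leaves are the old leaves of $X$, which are free because they are not $w$, together with the free leaf $c_Z$. A chain edge of $Z$ is a chain edge of $X$ or the edge $zc_Z$. The first kind is excluded because it would have to meet $w\notin Z$. The second kind is excluded because $c_Z$ has degree $1$. So $Z$ is clean, or a single edge with a free end when $Z$ is just $\{z\}$.

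For $Y$ the argument is the same, with one extra point. Condition (C1) holds as before, and $s(w)\in\{\si,\so\}$ is unchanged. For (C2), any chain edge of $Y$ is a chain edge of $X$, because the new edge $zc_Y$ has the degree-$1$ end $c_Y$; hence it is incident with $w$. For (C3), if $w$ is a leaf then its neighbour keeps its degree, which is at least $3$.

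The only delicate case is $Y=\{w\}$, that is, $w=z$ is the neighbour of $c$. Then the new tree is the single edge $wc_Y$, which has a free end as claimed. So the main obstacle is only bookkeeping: degrees are preserved because the split edge persists as an edge to a fresh free leaf, and that fresh leaf can never create a chain edge.
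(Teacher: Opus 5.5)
Your proof is correct and follows essentially the same route as the paper: (a) is exactly (C1) and (C2) combined with $w\notin X_{r|c}$, and (b) rests on the same observation that splitting $c$ preserves every degree except that of the new free leaf, which can never be the end of a chain edge, with the case of a leaf $w$ adjacent to $c$ producing the single edge $wc_Y$. One small point: your phrase ``$Y=\{w\}$, that is, $w=z$'' is slightly loose, since $w=z$ with $d(w)=2$ does not give $Y=\{w\}$; but in that case $w$ is not a leaf of the new tree, so (C3) is vacuous and your argument still goes through.
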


\begin{proof}
(a) Every leaf of $X$ in $X_{r|c}$ is free by (C1), and every chain edge of $X$ is incident with
$w$ by (C2), so none lies in $X_{r|c}$.
(b) In $Y'$ ($Y$ plus the new free leaf $c'$) all degrees are as in $X$, except $d(c')=1$. So
(C1) holds, and every chain edge of $Y'$ is a chain edge of $X$, hence incident with $w$. If $w$ is
a leaf with neighbour $y$: either $y=c$ and $Y'$ is the single edge $wc'$, or $d_{Y'}(y)=d(y)\ge3$.
The other components contain neither $w$ nor a chain edge of $X$, and all their leaves are free.
\end{proof}

\subsection{The strategy lemma}

\begin{lemma}\label{lem:main}
Let $X\in\mathcal C$ have at least one edge, and put $e=|E(X)|$.
\begin{enumerate}[label=\textup{(\roman*)}]
\item Some legal move turns $X$ into a $\mathcal C$-sum with $e-1$ edges.
\item For every legal move $X\to X'$, either $X'$ is a $\mathcal C$-sum with $e-1$ edges, or
there is a legal move $X'\to X''$ such that $X''$ is a $\mathcal C$-sum with $e-2$ edges.
\end{enumerate}
\end{lemma}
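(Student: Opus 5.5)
The plan is to prove (i) and (ii) together by a case analysis on the type of $X$. The main tool is \cref{lem:onemod}: a move $u\to v$ deletes the edge $uv$, splits $X$ into $X_{u|v}$ and $X_{v|u}$, and changes only the statuses of $u$ (type $\so$) and $v$ (type $\si$). So it suffices to control each side separately, which is what \cref{lem:onemod} and \cref{lem:almostfacts} are designed for. By \cref{lem:deadchar} and the remark after it, no normalisation step deletes an edge, so every move lowers the edge count by exactly one. The edge counts in (i) and (ii) are then automatic, and only membership in $\mathcal C$ has to be checked.

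\emph{Trivial and clean cases.} If $X$ has one edge, any legal move leaves the empty game, which is a $\mathcal C$-sum with $0$ edges. Such a move exists because, by the hypotheses, a single-edge component has a free end (this uses \cref{lem:deadchar}). Now let $X$ be clean and consider any legal move on $uv$. For $A=X_{u|v}$ with $a=u$, and for $A=X_{v|u}$ with $a=v$, the hypotheses of \cref{lem:onemod} hold, because in a clean tree all leaves are free and there are no chain edges. So both sides normalise to $\mathcal C$-sums. Hence \emph{every} move from a clean tree yields a $\mathcal C$-sum with $e-1$ edges, which gives (i) and (ii) at once.

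\emph{Almost clean, part (i).} Let the defect be $w$, say with $s(w)=\si$ (the case $\so$ follows by reversal). I would play a move at $w$ that gives $w$ an outgoing arc, so that $w$ becomes $\sff$ and splits. This is legal. If $d(w)\ge2$ there is no condition at $w$. If $w$ is a leaf, its neighbour has degree at least $3$ by (C3), so there is no condition there either. The far side of the edge is handled by \cref{lem:almostfacts}(a) together with \cref{lem:onemod}. The remaining components at $w$ contain no chain edge of $X$, since every chain edge is incident with $w$ and has now been split off at a free leaf, and all their leaves are free. So these components are clean or single edges.

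\emph{Almost clean, part (ii), and the main obstacle.} Consider a move on an edge $cr$ with $w\notin X_{r|c}$. The side $X_{r|c}$ is handled by \cref{lem:almostfacts}(a) and \cref{lem:onemod}. On the side containing $w$, if $c$ becomes $\sff$ we are done by \cref{lem:almostfacts}(b). Likewise, a move at $w$ that gives $w$ the type it lacks is done as in (i). The dangerous moves are two kinds. The first gives $c\neq w$ the same type $\theta$ it already had (or $c$ had status $\sn$), leaving a tree with two blocked vertices $w$ and $c$, possibly with a new chain edge at $c$ now that $d(c)$ has dropped. The second gives $w$ a second arc of its own type, which lowers $d(w)$ and may make $w$ a blocked leaf whose neighbour has degree $2$, violating (C3). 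In each case I would give a single reply that restores membership in $\mathcal C$. In the first case, reply at $c$ (or at a leaf now adjacent to $c$) with the opposite type $\bar\theta$, so that $c$ becomes $\sff$ and splits. \cref{lem:almostfacts}(b) then returns an almost clean tree with defect $w$, together with clean pieces. Where $c$ has become a blocked leaf adjacent to a blocked vertex of opposite status, I would use \cref{lem:oppblocked} instead to make a vertex free without spending a move. In the second case, reply on the newly created chain edge at $w$ in the direction that frees $w$'s neighbour, or that frees $w$ itself.

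I expect the main obstacle to be verifying, subcase by subcase according to $d(c)\in\{2,3,\ge4\}$ and whether $c$ is adjacent to $w$, that such a reply is legal and yields a $\mathcal C$-sum with exactly $e-2$ edges. In particular one has to check that the reply never creates a third blocked vertex or a dead edge. I would organise this by the distance from $c$ to $w$ and reuse \cref{lem:onemod} for every side not containing $w$.
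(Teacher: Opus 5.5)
Your architecture is the paper's: clean trees need no reply by \cref{lem:onemod}, and for an almost clean tree you split off the side without $w$ via \cref{lem:almostfacts}(a), free a vertex by one reply or by \cref{lem:oppblocked}, and conclude with \cref{lem:almostfacts}(b). (In the clean case, also note that a legal move exists, e.g.\ on any leaf edge.)

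However, the reply rule you give for your first kind of dangerous move fails when the moved vertex $c\neq w$ has $d(c)=2$ in $X$. Then $c$ becomes a blocked leaf of status $\theta$. The only legal orientation of its last edge $cz$ gives $c$ the type $\bar\theta$, and therefore gives $z$ the type $\theta$. So replying at $c$ with the opposite type only pushes the same defect one edge further, and no reply is left.

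Concretely, let $X$ have edges $wy,yf_1,yz,zq,qf_2,zc,cb$, with $s(w)=\si$, the leaves $f_1,f_2,b$ free, and all other statuses $\sn$. This $X$ is almost clean with defect $w$. After $b\to c$ there are two blocked leaves $w$ and $c$, so a reply is needed. Your reply $c\to z$ leaves $z$ with status $\si$ and degree $2$. Then $zq$ is a chain edge not incident with $w$, so the result is not a $\mathcal C$-sum. Your fallback via \cref{lem:oppblocked} does not apply, because $s(z)=\sn$.

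The missing idea is to reply at the neighbour $z$, not at $c$, and then use \cref{lem:oppblocked} after the reply. By (C2) and (C3), either $z=w$ with $d(w)=2$, or $d(z)\ge3$. The case $s(z)=\bar\theta$ is covered by your fallback, so consider the remaining cases.

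\emph{Case $d(z)\ge3$ and $s(z)\in\{\sn,\theta\}$.} Pick a neighbour $q\neq c$ of $z$ with $w\notin X_{q|z}$; it exists, since $z$ has at least two other neighbours and at most one leads to $w$. Orient $zq$ so that $z$ receives $\bar\theta$. This is legal at $z$ because $cz$ is still unmarked. Now $z$ has status $\sff$, or status $\bar\theta$ next to the blocked leaf $c$ of status $\theta$. In both cases $z$ is free by \cref{lem:oppblocked}. Splitting $z$ leaves:
\begin{itemize}
\item the single edge $cz$ with a free end;
\item $X_{q|z}$ with $q$ modified, handled by \cref{lem:almostfacts}(a) and \cref{lem:onemod};
\item the component containing $w$, handled by \cref{lem:almostfacts}(b);
\item clean trees.
\end{itemize}
In the example above, the reply $z\to q$ works.

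\emph{Case $z=w$ with $s(w)=\theta$.} Reply on the other edge $wx$ so that $w$ receives $\bar\theta$; then $w$ becomes free. Your rule would instead give $w$ a second arc of its own type. That makes $w$ a blocked leaf whose neighbour $x$ may have degree $2$, which violates (C3).
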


Here, and below, positions are taken in normal form, after possibly applying
\cref{lem:oppblocked}; neither changes the game.

\begin{proof}[Proof of \cref{thm:clean}]
Induction on $e=|E(X)|$; the case $e=0$ is trivial. By induction and the Sprague--Grundy theorem,
a $\mathcal C$-sum with $k<e$ edges has value $k\bmod 2$. By \cref{lem:main}(ii), every option $X'$
of $X$ either has value $(e-1)\bmod 2$ or has an option of value $e\bmod 2$; in both cases
$\gv(X')\neq e\bmod 2$. By \cref{lem:main}(i), some option has value $(e-1)\bmod2$. Hence
$\gv(X)=\mex$ of a set that contains $(e-1)\bmod 2$ and avoids $e\bmod 2$, which is $e\bmod 2$
when $e$ is odd ($0$ is present, $1$ is absent) and also when $e$ is even ($0$ is absent).
\end{proof}

\begin{proof}[Proof of \cref{lem:main}]
\emph{Single edges.} In normal form the edge is not dead, so it can be oriented, leaving nothing.

\emph{Clean $X$.} (i) Let $\ell$ be a leaf with neighbour $z$. Both orientations of $\ell z$ are
legal, since $\ell$ is free and $z$ has degree at least $2$ or is free. (ii) A move on $ab$ leaves
$X_{a|b}^\theta+X_{b|a}^{\theta'}$, and both satisfy the hypotheses of \cref{lem:onemod}. So no reply
is needed.

\emph{Almost clean $X$ with defect $w$, not clean.} Then $w$ is a leaf, or $d(w)=2$ (if
$d(w)\ge3$ then (C2) excludes chain edges and $X$ is clean). Every vertex $x\ne w$ has $d(x)\ge2$
or is a free leaf, so a move that gives $x$ one arc is always legal at $x$.

(i) If $w$ is a leaf with neighbour $y$, orient $wy$ in the direction allowed at $w$; this is legal
at $y$ since $d(y)\ge3$, and leaves $X_{y|w}^{\pi}$ (where $y$ receives type $\pi$), a $\mathcal C$-sum by
\cref{lem:almostfacts}(a) and \cref{lem:onemod}. If $d(w)=2$, with neighbours $x,y$, orient $wx$
so that $w$ receives type $\overline{s(w)}$. Then $w$ becomes $\sff$, and we are left with
$X_{x|w}^{\theta}$ (where $x$ receives type $\theta$), a $\mathcal C$-sum by \cref{lem:almostfacts}(a) and \cref{lem:onemod}, and with
$X_{y|w}$ plus a free leaf, which is clean or a single edge.

(ii) Let the first move orient the edge $ab$, so that $a$ receives type $\theta$.

\smallskip\noindent\emph{Case 1: $w\in\{a,b\}$.}

\emph{1a: $w$ is a leaf.} The move is on $wy$, and leaves $X_{y|w}^\pi$, a $\mathcal C$-sum as in (i).

\emph{1b: $d(w)=2$.} Say the move is on $wx$, let $y$ be the other neighbour of $w$, and let $w$
receive type $\theta$. The part $X_{x|w}$ (with $x$ modified) is a $\mathcal C$-sum by
\cref{lem:almostfacts}(a) and \cref{lem:onemod}. If $s(w)\cdot\theta=\sff$, the rest is $X_{y|w}$
with a free leaf, which is clean or a single edge. Otherwise $w$ is now a blocked leaf at $y$.
If $d(y)=1$, $y$ is free and the rest is a single edge with a free end. If $d(y)\ge3$, the rest
is almost clean with defect $w$: its degrees are those of $X$ except $d(w)=1$, so it has no chain
edge. If $d(y)=2$, the reply is to orient $wy$ in the direction allowed at $w$. This is legal at
$y$ since $d(y)=2$, and leaves $X_{y|w}$ with $y$ modified, a $\mathcal C$-sum by
\cref{lem:almostfacts}(a) and \cref{lem:onemod}.

\smallskip\noindent\emph{Case 2: $w\notin\{a,b\}$.} Rename so that $w\in A:=X_{a|b}$. The part
$X_{b|a}^{\theta'}$ is a $\mathcal C$-sum by \cref{lem:almostfacts}(a) and \cref{lem:onemod}. In
$A$, the vertex $a$ has degree $d(a)-1\ge1$, because $A$ contains $w\neq a$. Note that the
replies below are made inside $A$.

\emph{2.1: $s(a)\cdot\theta=\sff$.} Then $a$ is split, and \cref{lem:almostfacts}(b) with $c=a$
shows that the result is a $\mathcal C$-sum. No reply.

From now on $s(a)\cdot\theta=\theta$, so $a$ has status $\theta$ in $A$.

\emph{2.2: $d(a)\ge4$.} Then $A$ is almost clean with defect $w$: degrees and leaves are as in $X$,
except that $a$ still has degree at least $3$. No reply.

\emph{2.3: $d(a)=3$.} Now $a$ has degree $2$ in $A$, with neighbours $r_1,r_2$, and $w$ lies in
exactly one of $X_{r_1|a}$, $X_{r_2|a}$.

\emph{2.3.1: $w$ is a leaf adjacent to $a$}, say $r_1=w$. The pending edge $wa$ will give $a$ the
type $\pi=s(w)$. If $\pi\ne\theta$, then \cref{lem:oppblocked} turns $a$ into $\sff$, leaving the
single edge $wa$ (with a free end) and $X_{r_2|a}$ plus a free leaf, which is clean or a single edge. No reply.
If $\pi=\theta$, the reply is to orient $ar_2$ so that $a$ receives type $\bar\theta$. This is legal,
and $a$ becomes $\sff$. What remains is the single edge $wa$ with a free end, and $X_{r_2|a}$ with
$r_2$ modified, which is a $\mathcal C$-sum by \cref{lem:almostfacts}(a) and \cref{lem:onemod}.

\emph{2.3.2: otherwise.} The only chain edges of $A$ not incident with $w$ are edges $ar_j$ with
$r_j\ne w$ and $d(r_j)=2$; leaves and (C3) are unaffected. If there is no such edge, $A$ is almost
clean with defect $w$ and no reply is needed. If there is at least one (it may lie on either side of $a$), let $r\in\{r_1,r_2\}$ be such that
$w\notin X_{r|a}$. The reply is to orient $ar$ so that $a$ receives type $\bar\theta$. Then $a$
becomes $\sff$, $X_{r|a}$ with $r$ modified is a $\mathcal C$-sum by \cref{lem:almostfacts}(a) and
\cref{lem:onemod}, and the component containing $w$ is handled by \cref{lem:almostfacts}(b).

\emph{2.4: $d(a)=2$.} Now $a$ is a blocked leaf of status $\theta$ in $A$. Its neighbour $z$ must
give it the type $\bar\theta$, so the pending edge $az$ will give $z$ the type $\pi=\theta$. We
have either $z=w$ and $d(w)=2$, or $d(z)\ge3$. Indeed, if $d(z)=2$ then $az$ is a chain edge, so
$z=w$. If $d(z)=1$, then $z\ne w$ by (C3) because $d(a)=2$, so $z$ is free and $A=\{a,z\}$ does not
contain $w$, a contradiction.

\emph{2.4.1: $z=w$}, and let $x$ be the other neighbour of $w$. If $s(w)\neq\pi$,
\cref{lem:oppblocked} turns $w$ into $\sff$. What remains is the single edge $aw$ with a free end,
and $X_{x|w}$ plus a free leaf, which is clean or a single edge. No reply. If $s(w)=\pi$, the reply is to orient
$wx$ so that $w$ receives type $\bar\pi$. Then $w$ becomes $\sff$; what remains is the single edge
$aw$ with a free end, and $X_{x|w}$ with $x$ modified, a $\mathcal C$-sum by
\cref{lem:almostfacts}(a) and \cref{lem:onemod}.

\emph{2.4.2: $d(z)\ge3$.} If $s(z)\in\{\si,\so\}\setminus\{\pi\}$, \cref{lem:oppblocked} turns $z$
into $\sff$. This leaves the single edge $az$ with a free end, the component containing $w$
(\cref{lem:almostfacts}(b)), and clean trees. No reply. Otherwise $s(z)\in\{\sn,\pi\}$. The vertex
$z$ has at least two neighbours other than $a$, and at most one of them, $q'$, has $w\in X_{q'|z}$.
Choose a neighbour $q\notin\{a,q'\}$. The reply is to orient $zq$ so that $z$ receives type
$\bar\pi$. This is legal at $z$, because $az$ is still unmarked, and legal at $q$. Now $z$ has
status $\sff$, or status $\bar\pi$ next to the blocked leaf $a$ of status $\pi$; in both cases $z$
is $\sff$ by \cref{lem:oppblocked}. What remains is the single edge $az$ with a free end,
$X_{q|z}$ with $q$ modified (\cref{lem:almostfacts}(a) and \cref{lem:onemod}), the component
containing $w$ (\cref{lem:almostfacts}(b)), and clean trees.

In every case, by \cref{lem:deadchar} no edge is deleted by normalisation. So the position after
the first move has $e-1$ edges, and after a reply it has $e-2$ edges.
\end{proof}

\begin{remark}
The class $\mathcal C$ and the strategy were found with the help of a computer: we searched for a class of positions containing all clean trees that is closed under ``a move, followed by a reply when needed''. The strategy in the proof was then implemented literally by an independent program. It was checked on all $740{,}508$ positions of $\mathcal C$ with at most $12$ vertices, counted up to isomorphism, and on all of their $15{,}385{,}703$ first moves, for every case and every admissible choice in the proof. The Grundy values of all these positions equal $|E|\bmod 2$. Moreover, every tree with at most $18$ vertices whose threads have at most two edges satisfies \cref{cor:threads2}, including the statement about first moves.
\end{remark}

\subsection{Threads of length three}

\begin{theorem}\label{thm:threads3}
There are trees all of whose threads have at most three edges that violate the parity conjecture.
The smallest have $23$ vertices. For example, let $T_{21}$ be the following tree with $21$ vertices
and $\m=15$. Vertices $u$ and $v$ are joined by a path with $3$ edges. The vertex $u$ carries two
legs with $2$ and $3$ markable edges. The vertex $v$ carries a leg with $2$ markable edges and is
adjacent to a vertex $w$, which carries two legs with $1$ and $3$ markable edges. Each leg ends
with a pendant edge. Then $\gv(T_{21})=3$, and attaching a path with two edges at a special vertex
(\cref{rem:plusone}) gives a tree $T_{23}$ with $23$ vertices, $\m=16$ and $\gv=2$.
There are also four trees with $25$ vertices, $\m=18$ and $\gv=2$ consisting of a single piece with
threads of at most three edges.
\end{theorem}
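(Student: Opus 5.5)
The statement is computational, so my plan has three strands. I verify the claimed value of the explicit example, derive $T_{23}$ from it by theory, and establish minimality and the $25$-vertex single-piece examples by exhaustive search, cross-checked by independent programs.

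First I check the structure of $T_{21}$. It has two chains of degree-$2$ vertices between branch vertices. One is the $u$--$v$ path with $3$ edges. The other is the $v$--$w$ edge, which has $1$ edge since $v$ and $w$ are adjacent. The legs have $1,2,2,3,3$ markable edges. Because $u$, $v$ and $w$ have degree $3$ and are not special, the only special vertices are the leg ends. So $T_{21}$ is a single piece, and its threads are the five legs and the two connecting paths, each with at most $3$ edges. The vertex count is $3$ (branch vertices) $+2$ (interior of the $u$--$v$ path) $+$ the leg vertices, which are $\sum(x_t+1)=11+5=16$; the total is $21$. The markable edges number $11+3+1=15$.

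Next I compute $\gv(T_{21})=3$. By \cref{prop:sum}, every position splits into components, and runs are evaluated by \cref{thm:runs}. I therefore run the normal-form solver (P2) on this piece, cross-checked by the raw-rules program (P1). A hand certificate in the spirit of the $\Sp(1,3,7)$ certificate would be possible, but for a $15$-edge piece with three branch vertices the state space is far too large to present, so I rely on the machine.

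Given $\gv(T_{21})=3$, the rest is theory. \cref{rem:plusone} attaches a two-edge path at a special vertex. This adds one markable edge and a new piece $\R_1(\free,\free)$, and creates only a one-edge thread. Hence $T_{23}$ has $23$ vertices, $\m=16$, and $\gv=3\oplus1=2\neq0$. Since $\m$ is even, $T_{23}$ violates \cref{conj:parity}.

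Minimality is the main obstacle. By \cref{cor:threads2}, a counterexample must contain a thread with exactly $3$ edges. Trees with at most $20$ vertices are covered by the census (\cref{tab:census}) once I filter by thread length. For $21$ and $22$ vertices, enumerating all trees (several million) is costly. Instead I would use \cref{cor:pieces} and \cref{prop:reform}. A tree violates the conjecture only if some piece $Q$ has $\gv(Q)\neq|E(Q)|\bmod 2$; otherwise nim-summing parities gives parity. Enumerating all pieces up to $22$ vertices with threads of length at most $3$ is a much smaller search. I then check which combinations of a bad piece with gluing can produce an actual counterexample within $22$ vertices. This must account for the extra vertices needed to realise a piece inside a tree: a pendant leaf at each piece leaf, and at least two vertices per added $\R_1$ piece. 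The absence of any counterexample there, confirmed by two independent solvers, gives the lower bound $23$.

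The same piece enumeration at $25$ vertices lists the single-piece trees with $\m=18$, $\gv=2$ and threads of length at most $3$. I expect exactly four such trees, confirmed by (P2) and a third solver as in \cref{tab:verification}.
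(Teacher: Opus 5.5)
Your proposal is correct and follows the paper's route. The paper evaluates $\gv(T_{21})=3$ by machine, and its second minimality check is exactly your search for pieces $Q$ with $\gv(Q)\neq|E(Q)|\bmod 2$. You obtain $T_{23}$ correctly from \cref{rem:plusone}; the paper also computes it directly, and in addition enumerates all trees with at most $23$ vertices and threads of at most three edges, and uses the copycat argument of \cref{thm:twin} to restrict its piece search to twin-free pieces with at most $18$ vertices, whereas your unrestricted search is larger but needs no twin argument.
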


The values were computed by three programs: a raw-rules solver applied to the actual trees $T_{21}$ and $T_{23}$, the normal-form solver (P2), and a third, independently written normal-form solver. The minimality was established in two independent ways.
\begin{itemize}[leftmargin=*]
\item An exhaustive enumeration of all trees with at most $23$ vertices whose threads have at most three edges ($4{,}075{,}737$ such trees with $22$ vertices, $14{,}828{,}074$ with $23$). It finds no counterexample with at most $22$ vertices, and exactly five with $23$ vertices, all with $\m=16$ and $\gv=2$.
\item A search over pieces. A counterexample must contain a piece $Q$ with $\gv(Q)\ne|E(Q)|\bmod 2$. The copycat argument of \cref{thm:twin} also works for pieces, whose vertices have status $\sn$ or $\sff$, so we may assume that $Q$ has no twin branches. Among the twin-free pieces with threads of at most three edges and at most $18$ vertices, the piece of $T_{21}$ is the only such $Q$.
\end{itemize}

\section{Conjectures and open problems}\label{sec:open}

Many questions remain open. Here are the ones we found hardest to put down.

\begin{conjecture}[Unbounded Grundy values]\label{conj:unbounded}
The Grundy values of the Game of Arrows on trees are unbounded. In fact $\G=\NN$: every non-negative integer is the Grundy value of some tree.
\end{conjecture}

Since $\G$ is an $\mathbb{F}_2$-subspace (\cref{cor:subgroup}), $\G=\NN$ holds if and only if for every $r\ge0$ some tree has a Grundy value in $[2^r,2^{r+1})$, because elements with distinct leading bits span everything. We know this for $r\le5$ (\cref{thm:allvalues}).

As evidence, the maximum Grundy value over three-legged spiders with legs of at most $L$ markable edges grows steadily: it is $13$ for $L=30$, $28$ for $L=80$, $32$ for $L=120$ and $44$ for $L=300$, the last attained by $\Sp(33,215,218)$. Trees with two branch vertices reach larger values with fewer edges. For instance, $H(8,13\,|\,23\,|\,17,23)$ has value $32$ with $\m=84$, and $H(6,41\,|\,19\,|\,19,39)$ has value $62$ with $\m=124$; both were confirmed by two independent programs. Large values seem to require long legs of similar lengths, in line with \cref{rem:large}.

A natural strategy for proving \cref{conj:unbounded} is to attach a long leg to a fixed gadget and to show that the cuts of \cref{rem:large} produce all small values. The following observation shows that the attached part can never stay bounded.

\begin{proposition}[Blocked legs]\label{prop:blockedleg}
Let $(K,s)$ be a status-labelled tree in which no vertex has status $\si$ or $\so$, and let $c\in V(K)$ with $s(c)=\sn$. For $k\ge1$, let $h(k)$ be the Grundy value of $K$ with a new path $c=w_0,w_1,\dots,w_k$ attached at $c$, where $w_1,\dots,w_{k-1}$ have status $\sn$ and $w_k$ has status $\si$. Then for every $k\ge2$,
\[
  h(k)\notin\{h(k-1)\}\cup\{h(k'),\,h(k')\oplus 1:\ 1\le k'\le k-2\}.
\]
Consequently each value of $\lfloor h(k)/2\rfloor$ is taken at most twice, and only at consecutive $k$. In particular $h(k)\to\infty$.
\end{proposition}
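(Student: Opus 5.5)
The plan is to read off enough options of the position defining $h(k)$ to force the stated exclusions. The value $h(k)$ is the mex of its options, so it is enough to show that $h(k-1)$ is an option and that, for every $1\le k'\le k-2$, both $h(k')$ and $h(k')\oplus1$ are options. Write $e_m=w_{m-1}w_m$ for the edges of the new path. We only use moves on $e_m$ with $2\le m\le k$; the moves on $e_1$ and inside $K$ are never needed.

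\emph{Step 1: marking $e_m$ with $2\le m\le k-1$.} The vertices $w_{m-1}$ and $w_m$ are interior path vertices of status $\sn$ and degree $2$. By \cref{lem:legal}, both orientations $\delta$ of $e_m$ are therefore legal. After the move, $w_{m-1}$ has degree $1$ and status $\si$ or $\so$, according to $\delta$. So the part containing $K$ is $K$ with a path of $k'=m-1\ge1$ edges attached at $c$, whose end is blocked. The part beyond the arrow is a detached run $\R_{k-m}$, blocked at the near end by $\delta$ and at the far end by $s(w_k)=\si$. By \cref{prop:sum}, the option value is the value of the attached part nim-added to the run value. By \cref{thm:runs}, the run value is $C_{k-m}$ or $D_{k-m}$. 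For $k-m\ge1$ these are $\{0,1\}$, and the two orientations give different values, as in \eqref{eq:beta}.

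\emph{Step 2: the attached part has value $h(k')$ for either orientation.} When the end status is $\si$, this is the definition of $h(k')$. When it is $\so$, apply the global reversal $\si\leftrightarrow\so$ of all statuses. This is a game isomorphism, exactly as in \cref{lem:reversal}. It fixes $K$, because by hypothesis $K$ contains only the statuses $\sn$ and $\sff$, and it fixes the interior path vertices. So the two orientations of $e_m$ give the options $h(k')$ and $h(k')\oplus1$, for every $1\le k'\le k-2$.

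\emph{Step 3: marking $e_k$.} Only the orientation compatible with $s(w_k)=\si$ is legal. The vertex $w_k$ then disappears, and what is left is $K$ with a blocked path of $k-1$ edges, so the option has value $h(k-1)$. (The needed legality at $w_{k-1}$ holds for $k\ge2$, since $w_{k-1}$ is either $c$ with $s(c)=\sn$ of degree at least $2$, or an interior vertex.) Taking the mex over the options found in Steps 1–3 gives the displayed non-membership.

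\emph{Step 4: the consequences.} Suppose $\lfloor h(k)/2\rfloor=\lfloor h(k')/2\rfloor$ with $k'\le k-2$. Then $h(k)\in\{h(k'),h(k')\oplus1\}$, which Step 1 rules out. So a value $q$ of $\lfloor h/2\rfloor$ can occur only on a set of indices whose elements pairwise differ by less than $2$. Such a set has at most two elements, and they are consecutive. Hence each of the finitely many possible values of $\lfloor h/2\rfloor$ below any bound is taken by only finitely many $k$, and $h(k)\to\infty$. The only delicate point is Step 2: we must check that the reversal symmetry really fixes the rest of the position, which is exactly why the hypothesis excludes the statuses $\si$ and $\so$ from $K$. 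The remaining steps are routine applications of \cref{lem:legal,thm:runs}.
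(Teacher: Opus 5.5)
Your proposal is correct and follows the paper's proof essentially step for step. The orientations of $w_{m-1}w_m$ for $2\le m\le k-1$ give $h(m-1)$ and $h(m-1)\oplus1$, using $\{C_{k-m},D_{k-m}\}=\{0,1\}$ and the $\si\leftrightarrow\so$ reversal (which is where the hypothesis on $K$ enters); the forced move on the last edge gives $h(k-1)$; and the consequence follows from $\lfloor a/2\rfloor=\lfloor b/2\rfloor \iff a\in\{b,b\oplus1\}$. (One tiny nit: in Step 3, since $k\ge2$, the vertex $w_{k-1}$ is always an interior path vertex and never $c$, so that clause is unnecessary.)
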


\begin{proof}
The far end $w_k$ forces its edge to point towards $c$. Let $2\le j\le k$ and orient $w_{j-1}w_j$ in direction $\delta$, which must point towards $c$ if $j=k$. As in \cref{prop:spiderrec}, the result is the same gadget with a leg of $j-1$ edges, blocked in direction $\delta$, plus a detached run of $k-j$ edges, which has value $C_{k-j}$ or $D_{k-j}$ by \cref{thm:runs}. Reversing all arrows shows that the value of the gadget with a leg of length $k'$ blocked in either direction is $h(k')$, because $K$ has no vertex of status $\si$ or $\so$. For $j=k$ this gives the option $h(k-1)\oplus C_0=h(k-1)$. For $j\le k-1$, both directions are allowed and $\{C_m,D_m\}=\{0,1\}$ for $m=k-j\ge1$, giving both $h(j-1)$ and $h(j-1)\oplus1$. A Grundy value differs from the values of all options. For the consequence, if $\lfloor h(k')/2\rfloor=\lfloor h(k)/2\rfloor$ with $k'\le k-2$, then $h(k)\in\{h(k'),h(k')\oplus1\}$, which is impossible.
\end{proof}

So the options $t\oplus h(z-1-t)$ produced by cutting a long leg involve an unbounded function $h$. When $h(k)=k+c$ for large $k$, as in all two-legged gadgets we computed except $(1,2j)$, these options all have the parity of $z-1+c$; more precisely, they form the set given by the identity $\{t\oplus(n-t):0\le t\le n\}=\{e\le n: e\equiv n\pmod 2,\ e\mathbin{\&}\tfrac{n-e}{2}=0\}$. This suggests that large Grundy values come from the irregular part of $h$ at small arguments.

\begin{question}
Is there a conceptual reason why the two short legs of $\Sp(x,y,z)$ become ``invisible'' when $z$ is large (\cref{conj:eventual})? A natural approach is an analogue of the periodicity theorem for octal games~\cite{GuySmith1956,WinningWays}. The difficulty is that some spider states have Grundy values that grow linearly with the length of the long leg, because of the nim heaps $B_h=h$; \cref{sec:cert} shows how to deal with this for each fixed pair.
\end{question}

\begin{question}
Call a thread of a piece \emph{internal} if both of its ends have degree at least $3$, and a \emph{leaf thread} otherwise. By \cref{cor:threads2}, the parity conjecture holds when all threads have at most two edges. By \cref{thm:threads3}, it can fail when all threads have at most three edges. It can even fail when all internal threads have at most three edges and all leaf threads at most two. To see this, take the piece of $T_{21}$, and at the free end of each of its two legs with three edges attach two new edges to free leaves. The result is a piece with $20$ vertices, $19$ edges and Grundy value $3$ (by \cref{thm:twin} it is equivalent to the piece of $T_{21}$). Turning it into a tree and attaching a path with two edges at a special vertex (\cref{rem:plusone}) gives a counterexample. Does the parity conjecture hold when all internal threads have at most two edges and all leaf threads at most three? A search over the $19{,}570$ twin-free pieces of this kind with at most $22$ vertices, other than paths, found no counterexample.
\end{question}

\begin{question}
The proportion of counterexamples among trees with $n$ vertices grows from $0.026\%$ at $n=15$ to $0.088\%$ at $n=20$. Does it tend to $1/2$? Heuristically, by \cref{cor:pieces}, a large random tree is the nim-sum of many small pieces. If pieces with Grundy value at least $2$ occur with positive density, the Grundy value of a large random tree should be non-zero with probability tending to $1$. Then essentially the trees with $\m$ even, about half of all trees, would violate the conjecture.
\end{question}

\begin{question}
Which Grundy values occur for the full Game of Cycles on $2$-connected plane graphs? Fokkink and Zandee found the value $3$. Are there gadgets analogous to runs (nim heaps) when the cycle rule is present?
\end{question}

\subsection*{Closing remarks}
The Game of Cycles was designed to show how much mathematics hides in simple rules, and it keeps that promise. A guess that held on thousands of small trees failed for a reason as old as combinatorial game theory itself: Nim was hiding inside. We hope that the questions above will lead others to play the game and to find more surprises in it.

\section*{Data and code availability}
The programs (P1)--(P3), the scripts that produce \cref{tab:census,tab:firstmoves,tab:spidervalues,tab:families}, the certificate and verification programs for \cref{sec:s23,sec:onetwoj,sec:threads}, the raw output, and the hand-checkable certificate for $\Sp(1,3,7)$ are provided as ancillary files with the arXiv version of this paper (directory \texttt{anc/}); the file \texttt{anc/README.md} explains how to re-run every check.

\section*{Use of AI tools}
The author used a generative AI system, Claude (Anthropic), for two purposes.
\begin{itemize}[leftmargin=*]
\item \emph{Writing.} The AI system was used to draft and edit the text of the paper.
\item \emph{Computation.} The AI system was used to write, run and cross-check the programs behind the computational results, namely the programs (P1)--(P3) of \cref{sec:programs} and the other programs and scripts in the ancillary files. The re-implementations in \texttt{anc/independent\_checks/} were written by separate instances of the AI system from the text of the paper alone, without access to the other programs.
\end{itemize}
The AI system is not an author of this paper. The author has reviewed the paper and takes full responsibility for its contents.


\end{document}